\newtheorem{thm}{Theorem}[section]
\newtheorem{cor}[thm]{Corollary}
\newtheorem{lem}[thm]{Lemma}
\newtheorem{prop}[thm]{Proposition}
\theoremstyle{definition}
\theoremstyle{remark}
\newtheorem{rem}[thm]{Remark}
\numberwithin{equation}{section}
\begin{document}

\newcommand{\thmref}[1]{Theorem~\ref{#1}}
\newcommand{\secref}[1]{Section~\ref{#1}}
\newcommand{\lemref}[1]{Lemma~\ref{#1}}
\newcommand{\propref}[1]{Proposition~\ref{#1}}
\newcommand{\corref}[1]{Corollary~\ref{#1}}
\newcommand{\remref}[1]{Remark~\ref{#1}}
\newcommand{\eqnref}[1]{(\ref{#1})}
\newcommand{\exref}[1]{Example~\ref{#1}}

%%%%%%%%%%%%%
\DeclarePairedDelimiterX\setc[2]{\{}{\}}{\,#1 \;\delimsize\vert\; #2\,}

\newcommand{\nc}{\newcommand}
\nc{\Z}{{\mathbb Z}}
\nc{\C}{{\mathbb C}}
\nc{\N}{{\mathbb N}}
\nc{\Zp}{{\mathbb Z}_+}
\nc{\Zs}{{\mathbb Z}^*}
\nc{\OO}{\mc O}

\nc{\la}{\lambda}
\nc{\ep}{\epsilon}
\nc{\mc}{\mathcal}
\nc{\mf}{\mathfrak}
\nc{\h}{\mf h}
\nc{\cL}{{\mc L}}
\nc{\cH}{{\mc H}}
\nc{\G}{{\mf g}}
\nc{\DG}{\widetilde{\mf g}}
\nc{\SG}{\overline{\mf g}}
\nc{\La}{\Lambda}
\nc{\V}{\mf V} \nc{\bi}{\bibitem}

\nc{\hf}{\frac{1}{2}}
\nc{\gl}{{\mf{gl}}}
\nc{\osp}{\mf{osp}}
\nc{\spo}{\mf{spo}}
\nc{\hosp}{\widehat{\mf{osp}}}
\nc{\hspo}{\widehat{\mf{spo}}}
\nc{\I}{\mathbb{I}}
\nc{\J}{\mathbb{J}}
\nc{\X}{\mathbb{X}}
\nc{\hh}{\widehat{\mf{h}}}
\nc{\w}{\widetilde}
\nc{\ov}{\overline}
\nc{\un}{\underline}

\nc{\wla}{\w{\la}}
\nc{\ovla}{\ov{\la}}
\nc{\wmu}{\w{\mu}}
\nc{\ovmu}{\ov{\mu}}
\nc{\fh}{\mf h}
\nc{\ovfh}{\ov{{\mf h}}}
\nc{\wfh}{\tilde{{\mf h}}}
\nc{\cP}{{\mathcal{P}}}
\nc{\ovcH}{\ov{{\mathcal{H}}}}
\nc{\ovtH}{\ov{\texttt{H}}}
\nc{\wcH}{\w{\mathcal{H}}}
\nc{\cG}{\mathcal{G}}
\nc{\ovcG}{\ov{\mathcal{G}}}
\nc{\wcG}{\w{\mathcal{G}}}
\nc{\cQ}{\mathcal{Q}}
\nc{\cS}{\mathcal{S}}

\nc{\cC}{{\mathcal{C}}}
\nc{\ovcC}{\ov{{\mathcal{C}}}}
\nc{\wcC}{\w{\mathcal{C}}}
\nc{\cD}{{\mathcal{D}}}
\nc{\ovcD}{\ov{{\mathcal{D}}}}
\nc{\wcD}{\w{\mathcal{D}}}

\nc{\ft}{{\mf{t}}}
\nc{\ze}{{(0)}}
\nc{\mth}{\mathring{\mf h}}
\nc{\mtb}{\mathring{\mf b}}
\nc{\mtp}{\mathring{\mf p}}
\nc{\mtl}{\mathring{\mf l}}
\nc{\mwh}{\mathring{\w{\mf h}}}
\nc{\mwb}{\mathring{\w{\mf b}}}
\nc{\mwp}{\mathring{\w{\mf p}}}
\nc{\mwl}{\mathring{\w{\mf l}}}
\nc{\msh}{\mathring{\ov{\mf h}}}
\nc{\msb}{\mathring{\ov{\mf b}}}
\nc{\msp}{\mathring{\ov{\mf p}}}
\nc{\msl}{\mathring{\ov{\mf l}}}

\advance\headheight by 2pt

\title[Gaudin Hamiltonians and super KZ equations]
{Quadratic and cubic Gaudin Hamiltonians and super Knizhnik-Zamolodchikov equations for general linear Lie superalgebras}

\author[B. Cao]{Bintao Cao}
\address{School of Mathematics and Statistics, Yunnan University, Kunming, China, 650500}
\email{btcao@ynu.edu.cn}

\author[W. K. Cheong]{Wan Keng Cheong}
\address{Department of Mathematics, National Cheng Kung University, Tainan, Taiwan 701401}
\email{keng@ncku.edu.tw}

\author[N. Lam]{Ngau Lam}
\address{Department of Mathematics, National Cheng Kung University, Tainan, Taiwan 701401}
\email{nlam@ncku.edu.tw}

 \begin{abstract}

We show that under a generic condition, the quadratic Gaudin Hamiltonians associated to $\mathfrak{gl}(p+m|q+n)$ are diagonalizable on any singular weight space in any tensor product of unitarizable highest weight $\mathfrak{gl}(p+m|q+n)$-modules. Moreover, every joint eigenbasis of the Hamiltonians can be obtained from some joint eigenbasis of the quadratic Gaudin Hamiltonians for the general linear Lie algebra $\mathfrak{gl}(r+k)$ on the corresponding singular weight space in the tensor product of some finite-dimensional irreducible $\mathfrak{gl}(r+ k)$-modules for $r$ and $k$ sufficiently large. After specializing to $p=q=0$, we show that similar results hold as well for the cubic Gaudin Hamiltonians associated to $\mathfrak{gl}(m|n)$.

We also relate the set of singular solutions of the (super) Knizhnik-Zamolodchikov equations for $\mathfrak{gl}(p+m|q+n)$ to the set of singular solutions of the Knizhnik-Zamolodchikov equations for $\mathfrak{gl}(r+k)$ for $r$ and $k$ sufficiently large.

\end{abstract}

 \maketitle

\setcounter{tocdepth}{1}

\section{Introduction}

The Gaudin model associated to any finite-dimensional simple Lie algebra was originally introduced by Gaudin in \cite{G76, G83}. One of the main problems in the Gaudin model is the simultaneous diagonalization of (higher) Gaudin Hamiltonians (cf. \cite{BF, FFR, MTV06, MTV09, FFRyb, Ryb}). There are also several works, e.g.,  \cite{KuM, MVY, ChL}, which focus on the Gaudin Hamiltonians for classical Lie superalgebras.

One of the goals of this paper is to extend the results of \cite{ChL} to the super version of the quadratic and cubic Gaudin Hamiltonians associated to general linear Lie superalgebras.

Fix nonnegative integers $p, q, m, n$ with $n \geq 1$.
Let $\ovcG[q,m]_{(p,n)}$ and $\cG_{(p,n)}$ be the general linear Lie (super)algebras introduced in Section \ref{gl}. Note that $\ovcG[q,m]_{(p,n)}\cong \gl(p+m|q+n)$ and $\cG_{(p,n)} \cong \gl(p+n)$.

Suppose $\ell$ is a positive integer with $\ell \geq 2$. For $i=1, \ldots, \ell$, we study the quadratic Gaudin Hamiltonians $\ov H^i[q,m]_{(p,n)}$ associated to $\ovcG[q,m]_{(p,n)}$ and $H^i_{(p,n)}$  associated to $\cG_{(p,n)}$; see \eqref{qGH}.
We also study the cubic Gaudin Hamiltonians $\ovcC^i[m]_n$ and $\ovcD^i[m]_n$ associated to $\gl(m |n)$ and the cubic Gaudin Hamiltonians $\cC^i_n$ and $\cD^i_n$ associated to $\gl(n)$; see \eqref{cGH1} and \eqref{cGH2}. All of these Hamiltonians depend on $(z_1, \ldots, z_\ell)\in \C^\ell$, where $z_i$'s are pairwise distinct.

The proofs of the simultaneous diagonalization of (higher) Gaudin Hamiltonians on the tensor product of finite-dimensional irreducible modules over a simple Lie algebra involve certain geometric arguments together with the fact that the property of Gaudin algebras having simple joint spectra is an open condition on the parameters $(z_1, \ldots, z_\ell)\in \C^\ell$ (cf. \cite{MTV09,Ryb}).

We apply super duality, which is discussed in Section \ref{SD}, to relate Gaudin Hamiltonians associated to Lie superalgebras to those associated to Lie algebras and demonstrate the following result on quadratic Gaudin Hamiltonians.

\begin{thm} [\thmref{thm:quad-diag}] \label{main-1}
Let $\ov L^\otimes=\ov{L}_1 \otimes \cdots \otimes \ov{L}_\ell$ be a tensor product of unitarizable highest weight $\ovcG[q,m]_{(p,n)}$-modules. (For $m=p=0$, $\ov L_i$ are infinite-dimensional unitarizable highest weight modules over $\mf{gl}(q+n)$).
For a generic $(z_1, \ldots, z_\ell)\in \C^\ell$,
the quadratic Gaudin Hamiltonians
$\ov H^1[q,m]_{(p,n)},\ldots, \ov H^{\ell}[q,m]_{(p,n)}$
are simultaneously diagonalizable on any singular weight space of $\ov L^\otimes$,
and each joint eigenbasis of the Hamiltonians can be obtained from some joint eigenbasis of the quadratic Gaudin Hamiltonians
$H^1_{(r, k)},\ldots,H^{\ell}_{(r, k)}$ on the corresponding singular weight space in the tensor product of the corresponding finite-dimensional irreducible $\cG_{(r, k)}$-modules for $r$ and $k$ sufficiently large.
\end{thm}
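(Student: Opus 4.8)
The plan is to reduce the statement to the analogous, and already known, assertion for the ordinary general linear Lie algebra, and then transport it through the super duality functors of \secref{SD}. The Lie algebra input is the following: it is known (cf.\ \cite{MTV09,Ryb}) that for $\gl(N)$ and a tensor product $L_1\otimes\cdots\otimes L_\ell$ of finite-dimensional irreducible modules there is a dense open subset of $\C^\ell$ on which the quadratic Gaudin Hamiltonians are simultaneously diagonalizable on every singular weight space (indeed the whole Bethe algebra has simple joint spectrum there). I would apply this with $N=r+k$.

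Next I would set up the transport. By \secref{SD}, super duality provides, between the relevant parabolic categories, equivalences relating modules over the infinite-rank algebras $\ovcG$, $\wcG$ and $\cG$, all compatible with the formation of tensor products, and whose restriction to any weight space is a linear isomorphism carrying singular vectors to singular vectors. The key observation is that each $\ov H^i[q,m]_{(p,n)}$ is assembled from a split quadratic Casimir tensor $\Omega=\sum_{A,B}(-1)^{|B|}E_{AB}\otimes E_{BA}$ placed in the $i$-th and $j$-th tensor factors and summed with coefficients $(z_i-z_j)^{-1}$, and that this tensor is \emph{stable} on the categories at hand: on a fixed weight space of a tensor product of highest weight modules only finitely many of its terms act nontrivially, and these finitely many terms are literally the same on both sides of an equivalence, the ``new'' indices introduced by super duality contributing nothing. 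Hence the three corresponding families of quadratic Gaudin Hamiltonians match up under the equivalences. I would then fix modules whose super duality images are $\ov L^\otimes$ on one side and a tensor product $L^\otimes$ of $\cG$-modules on the other, so that the induced isomorphism of singular weight spaces intertwines $\ov H^i[q,m]_{(p,n)}$ with the $i$-th quadratic Gaudin Hamiltonian of $\cG$ for every $i$, and then truncate: for $r$ and $k$ large enough (depending only on $\ov L^\otimes$ and the chosen singular weight) the truncated $\cG$-modules become finite-dimensional irreducible $\cG_{(r,k)}$-modules, the truncated Hamiltonians become $H^i_{(r,k)}$, and the isomorphism restricts to one of the corresponding singular weight spaces intertwining $\ov H^i[q,m]_{(p,n)}$ with $H^i_{(r,k)}$. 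Here unitarizability of the $\ov L_i$ is precisely what makes the truncated modules finite-dimensional and irreducible; the case $m=p=0$ is handled identically, the $\ov L_i$ then being the infinite-dimensional unitarizable highest weight $\gl(q+n)$-modules.

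To conclude, I would take $(z_1,\dots,z_\ell)$ in the dense open subset of $\C^\ell$ supplied by the Lie algebra input for the tensor products of $\cG_{(r,k)}$-modules that arise (or, for a locus working simultaneously for all $\ov L^\otimes$ and all singular weights, in the intersection of these countably many dense open sets, which is still dense and hence nonempty). The Lie algebra input then provides a joint eigenbasis of $H^1_{(r,k)},\dots,H^\ell_{(r,k)}$ on the relevant singular weight space; transporting it along the intertwining isomorphism yields a joint eigenbasis of $\ov H^1[q,m]_{(p,n)},\dots,\ov H^\ell[q,m]_{(p,n)}$ on the singular weight space of $\ov L^\otimes$, which gives the simultaneous diagonalizability, and, since the isomorphism intertwines the two families, also shows that every joint eigenbasis on the superalgebra side is the image of one for the $H^i_{(r,k)}$. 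I expect the main obstacle to be the stability/compatibility claim of the second paragraph: checking that $\Omega$, with its superalgebra signs, genuinely defines an operator on weight spaces of tensor products in the infinite-rank categories, and that the truncation and comparison functors underlying super duality commute with its action, so that no contribution is lost or gained in passing among $\ovcG[q,m]_{(p,n)}$, $\wcG$ and $\cG$ or in truncating to $\gl(r+k)$.
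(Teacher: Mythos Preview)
Your proposal is correct and follows essentially the same route as the paper: reduce to the known $\gl(r+k)$ diagonalizability by transporting through super duality and then truncating, with unitarizability guaranteeing that the highest weights land in the set $\ov\cQ[q,m]_{(p,n)}$ whose counterparts on the $\cG_{(r,k)}$-side are dominant integral (hence finite-dimensional irreducibles). The one technical point you do not name explicitly is that super duality in the paper is formulated for the \emph{central extensions} $\SG[q,m]$, $\DG$, $\G$, so one first passes from $\ovcG[q,m]_{(p,n)}$ to $\SG[q,m]_{(p,n)}$ via the isomorphism $\iota$ of \eqref{iso-e}, using that the two Casimir tensors differ only by a multiple of $K\otimes K$ (see \eqref{Om-Om}), which acts as a scalar on each tensor factor and therefore does not affect (generalized) eigenspaces---this is precisely a piece of the ``stability/compatibility'' you flag as the main obstacle.
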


By taking $p=q=0$, \thmref{main-1} specializes to the diagonalization of the quadratic Gaudin Hamiltonians for $\gl(m|n)$ on the tensor product of irreducible polynomial modules, which was studied previously by Mukhin, Vicedo and Young \cite{MVY}. We also show that the action of the algebra generated by the Hamiltonians $\ov H^1[0,m]_{(0,n)},\ldots, \ov H^{\ell}[0,m]_{(0,n)}$ on the singular space of the tensor power of the natural module $\C^{m|n}$ contains a cyclic vector for each $(z_1, \ldots, z_\ell)\in \C^\ell$, where $z_i$'s are pairwise distinct (see \thmref{thm:cyclic}).

Furthermore, we establish the following result on the simultaneous diagonalization of the cubic Gaudin Hamiltonians.

\begin{thm} [\thmref{thm:cubic-diag}] \label{main-2}
Let $\ov L^\otimes=\ov{L}_1 \otimes \cdots \otimes \ov{L}_\ell$ be a tensor product of irreducible polynomial $\gl(m|n)$-modules.
For a generic $(z_1, \ldots, z_\ell)\in \C^\ell$,
 the cubic Gaudin Hamiltonians $\ovcC^1[m]_{n},\ldots,\ovcC^\ell[m]_{n}$
and
$\ovcD^1[m]_{n},\ldots,\ovcD^\ell[m]_{n}$
are simultaneously diagonalizable on any singular weight space of $\ov L^\otimes$,
and each joint eigenbasis of the Hamiltonians can be obtained from some joint eigenbasis of the quadratic Gaudin Hamiltonians for $\gl(k)$ on the corresponding singular weight space in the tensor product of the corresponding finite-dimensional irreducible $\gl(k)$-modules for $k$ sufficiently large.
\end{thm}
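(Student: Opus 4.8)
\emph{Proof strategy.} The proof follows the pattern of \thmref{main-1}, the new ingredient being the behaviour of the cubic Gaudin Hamiltonians under super duality.

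\emph{Reduction.} First I would use the super duality of \secref{SD} to identify the singular weight space of $\ov L^\otimes=\ov{L}_1\otimes\cdots\otimes\ov{L}_\ell$, a tensor product of irreducible polynomial $\gl(m|n)$-modules, with the singular weight space of a tensor product $L^\otimes=L_1\otimes\cdots\otimes L_\ell$ of finite-dimensional irreducible $\gl(k)$-modules for $k$ sufficiently large, $L_i$ being the module matched to $\ov{L}_i$ under the equivalence. By \thmref{main-1} (taken with $p=q=0$) the quadratic Gaudin Hamiltonians $\ov H^1[0,m]_{(0,n)},\dots,\ov H^\ell[0,m]_{(0,n)}$ of $\gl(m|n)$ are transported to the quadratic Gaudin Hamiltonians of $\gl(k)$, and the latter admit, for $(z_1,\dots,z_\ell)$ in a dense open subset of $\C^\ell$, a joint eigenbasis on the relevant singular weight space. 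Since $\ov H^i[0,m]_{(0,n)}$, $\ovcC^i[m]_n$ and $\ovcD^i[m]_n$ all lie in one commutative Gaudin subalgebra, it suffices to prove that a joint eigenbasis of the quadratic Gaudin Hamiltonians of $\gl(k)$ remains a joint eigenbasis once $\ovcC^i[m]_n$ and $\ovcD^i[m]_n$ are transported through super duality.

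\emph{Key step.} Write $\ovcC^i[m]_n$ and $\ovcD^i[m]_n$, following \eqref{cGH1}--\eqref{cGH2}, via the cubic $\gl(m|n)$-invariant tensors in $U(\gl(m|n))^{\otimes 2}$ and $U(\gl(m|n))^{\otimes 3}$. I would lift these tensors to the infinite-rank Lie superalgebra $\DG$ that mediates the equivalence in \secref{SD}, and then transport them to $\gl(k)$ by the truncation functors. The identity to be proved is that, once restricted to the singular weight space --- on which every occurring weight is a partition --- the transported operators lie in the subalgebra generated by the quadratic Gaudin Hamiltonians of $\gl(k)$; equivalently, after super duality each cubic Gaudin Hamiltonian of $\gl(m|n)$ becomes a polynomial in the quadratic Gaudin Hamiltonians of $\gl(k)$. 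Granting this, any joint eigenbasis of the quadratic Gaudin Hamiltonians of $\gl(k)$ is automatically a joint eigenbasis of the transported $\ovcC^i[m]_n$ and $\ovcD^i[m]_n$, so its preimage under super duality is a joint eigenbasis of $\ovcC^1[m]_n,\dots,\ovcC^\ell[m]_n,\ovcD^1[m]_n,\dots,\ovcD^\ell[m]_n$ on the singular weight space of $\ov L^\otimes$. Taking $(z_1,\dots,z_\ell)$ in the dense open locus furnished by \thmref{main-1} then yields the asserted generic diagonalizability together with the passage between eigenbases.

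\emph{Main obstacle.} The hardest point is precisely this identification of the cubic Hamiltonians under super duality. Super duality is an equivalence of module categories built from parabolic truncation and a priori respects only the tensorial part of the Gaudin algebra, so the cubic invariant tensors must be matched by a direct computation in the (infinite-rank) enveloping algebras, and one has to show that a genuinely cubic element on the $\gl(m|n)$ side collapses --- after restriction to weight spaces labelled by partitions --- to a polynomial in the quadratic Gaudin Hamiltonians on the $\gl(k)$ side, an operator identity rather than a mere agreement of commutants. Subsidiary points are to confirm that $\ovcC^i[m]_n$ and $\ovcD^i[m]_n$ act by well-defined operators preserving the singular weight space and commuting with the quadratic Hamiltonians, and that the dense open subset of $\C^\ell$ can be chosen independently of the truncation parameter $k$.
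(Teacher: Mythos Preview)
Your proposal contains a genuine gap in the key step. You assert that under super duality the cubic Gaudin Hamiltonians $\ovcC^i[m]_n$, $\ovcD^i[m]_n$ for $\gl(m|n)$ should collapse to polynomials in the \emph{quadratic} Gaudin Hamiltonians for $\gl(k)$. This is false in general: for $\gl(k)$ with $k\ge 3$ the cubic Gaudin Hamiltonians are genuinely new elements of the Gaudin algebra, not expressible through the quadratic ones, and no restriction to partition-labelled weight spaces changes this. So the operator identity you set as the goal cannot be proved because it does not hold, and the ``main obstacle'' you identify is a mirage.

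What the paper actually does is simpler and avoids this pitfall. It shows (\lemref{cubic-corresp} and \propref{prop:C=}) that under super duality the cubic Hamiltonians $\ovcC^i[m]$, $\ovcD^i[m]$ on the $\gl(m|n)$ side are intertwined with the \emph{cubic} Hamiltonians $\cC^i$, $\cD^i$ on the $\gl(k)$ side --- cubic goes to cubic, not to a polynomial in quadratic. The diagonalizability then comes not from any reduction to quadratic Hamiltonians but from the fact (\propref{Gaudin-alg}, relying on Rybnikov's theorem) that for generic $z$ the \emph{entire} Gaudin algebra of $\gl(k)$, including all higher Hamiltonians, is simultaneously diagonalizable on the singular space. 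Since the quadratic and cubic Hamiltonians on the $\gl(k)$ side commute and are jointly diagonalizable, a common eigenbasis exists and is in particular an eigenbasis for the quadratic family; transporting it back through super duality gives the eigenbasis for the cubic Hamiltonians on the $\gl(m|n)$ side. The correspondence with a quadratic eigenbasis in the theorem statement is a consequence of this simultaneous diagonalization, not of any algebraic collapse.
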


The Knizhnik-Zamolodchikov (KZ) equations for Lie algebras are a system of partial differential equations defined in terms of quadratic Gaudin Hamiltonians (cf. \cite{KZ, EFK, SV, V}). The super KZ equations for Lie superalgebras can be formulated in a similar way (see, e.g., \cite{CaL, GZZ, KuM}).

Another goal of this paper is to study (super) KZ equations and generalize the results of \cite{CaL} to the Lie superalgebra $\ovcG[q,m]_{(p,n)}$. Particularly, we have the following.

\begin{thm} [\thmref{iso-KZ}] \label{main-3}
There is a linear isomorphism from the set of singular solutions of the (super) KZ equations of any fixed weight on any tensor product of $\ovcG[q,m]_{(p,n)}$-modules in $\mathring{\ov\OO}[q,m]_{(p,n)}$ to the set of singular solutions of the KZ equations of the corresponding weight on the tensor product of the corresponding $\cG_{(r,k)}$-modules (nonsuper!) in $\mathring{\OO}{(r,k)}$ for $r$ and $k$ sufficiently large. (Here the categories $\mathring{\ov\OO}[q,m]_{(p,n)}$ and $\mathring{\OO}{(r,k)}$ are defined in Section \ref{sec:SKZ}.)
\end{thm}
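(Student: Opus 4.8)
The plan is to derive the statement from the correspondence between quadratic Gaudin Hamiltonians that already underlies \thmref{main-1}. Recall from Section~\ref{sec:SKZ} that, writing $\kappa$ for the parameter in the equations, a function $\psi$ on $\setc{(z_1,\ldots,z_\ell)\in\C^\ell}{z_i\neq z_j\ \text{for}\ i\neq j}$ with values in a tensor product $\ov L^\otimes$ of modules in $\mathring{\ov\OO}[q,m]_{(p,n)}$ solves the super KZ equations exactly when
\[
\kappa\,\frac{\partial\psi}{\partial z_i}=\ov H^i[q,m]_{(p,n)}\,\psi ,\qquad i=1,\ldots,\ell ,
\]
and it is a singular solution of weight $\mu$ if in addition $\psi$ is valued in the subspace $(\ov L^\otimes)^+_\mu$ of singular vectors of weight $\mu$; the analogous description holds on the $\cG_{(r,k)}$ side. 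For each fixed admissible $(z_1,\ldots,z_\ell)$ the operator $\ov H^i[q,m]_{(p,n)}$ commutes with the diagonal action of $\ovcG[q,m]_{(p,n)}$ and preserves weights, hence restricts to an endomorphism of $(\ov L^\otimes)^+_\mu$; the same holds for $H^i_{(r,k)}$ on $(L^\otimes)^+_{\mu'}$.

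The input I would take from Section~\ref{SD}, exactly as in the proof of \thmref{main-1}, is that for $r$ and $k$ sufficiently large---one bound working simultaneously for the finitely many modules $\ov L_1,\ldots,\ov L_\ell$---super duality furnishes a linear isomorphism
\[
\Phi_\mu\colon (\ov L^\otimes)^+_\mu\ \xrightarrow{\ \sim\ }\ (L^\otimes)^+_{\mu'} ,
\]
where $L^\otimes=L_1\otimes\cdots\otimes L_\ell$ is the tensor product of the corresponding finite-dimensional irreducible $\cG_{(r,k)}$-modules and $\mu'$ is the weight corresponding to $\mu$, and that $\Phi_\mu$ intertwines the quadratic Gaudin Hamiltonians,
\[
\Phi_\mu\circ\ov H^i[q,m]_{(p,n)}=H^i_{(r,k)}\circ\Phi_\mu ,\qquad i=1,\ldots,\ell ,
\]
for every admissible $(z_1,\ldots,z_\ell)$. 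In particular both sides act on finite-dimensional spaces, $\Phi_\mu$ is independent of $(z_1,\ldots,z_\ell)$, and the $z$-dependence of both Hamiltonians enters only through the scalar functions $1/(z_i-z_j)$; so the displayed relation for all $z$ reduces to the intertwining of the individual Casimir operators $\Omega_{st}$ on pairs of tensor slots, which is the real content of super duality here.

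Granting this, the theorem follows by a direct computation: send $\psi\mapsto\Phi_\mu\circ\psi$. If $\psi$ is a singular solution of weight $\mu$ of the super KZ equations on $\ov L^\otimes$, then $\Phi_\mu\circ\psi$ is valued in $(L^\otimes)^+_{\mu'}$ and, since $\Phi_\mu$ is a constant linear map,
\[
\kappa\,\frac{\partial}{\partial z_i}\bigl(\Phi_\mu\circ\psi\bigr)=\Phi_\mu\Bigl(\kappa\,\frac{\partial\psi}{\partial z_i}\Bigr)=\Phi_\mu\bigl(\ov H^i[q,m]_{(p,n)}\,\psi\bigr)=H^i_{(r,k)}\bigl(\Phi_\mu\circ\psi\bigr) ,
\]
so $\Phi_\mu\circ\psi$ is a singular solution of weight $\mu'$ of the KZ equations on $L^\otimes$. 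The assignment is clearly linear, and $\phi\mapsto\Phi_\mu^{-1}\circ\phi$ is its two-sided inverse, carrying singular solutions back; hence $\psi\mapsto\Phi_\mu\circ\psi$ is the asserted linear isomorphism of solution spaces.

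The work, and the step I expect to be the main obstacle, is the claim of the second paragraph---that super duality transports the quadratic Gaudin Hamiltonians correctly. Super duality is primarily an equivalence of module categories, whereas the Gaudin Hamiltonians are not module morphisms but operators assembled from the quadratic Casimir tensor acting on pairs of tensor slots; so one must check that the relevant truncation functor is compatible with the coproduct and with the chosen invariant forms (the supertrace form on $\ovcG[q,m]_{(p,n)}$ against the trace form on $\cG_{(r,k)}$), and that under the resulting identifications the Casimir tensor of the Lie superalgebra maps to that of the Lie algebra on the truncated modules. This is precisely the compatibility underlying \thmref{main-1}, so for the present statement it may simply be invoked; what still deserves a line of care is that the matching of weights and of singular vectors used here is the one produced by super duality, and that the threshold on $r,k$ can be taken uniform in $(z_1,\ldots,z_\ell)$---which it can, as it depends only on the highest weights of the $\ov L_i$.
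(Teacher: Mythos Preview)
Your overall strategy---transport the KZ equations through the super-duality isomorphism on singular weight spaces---is the same as the paper's, but there is a genuine gap in the intertwining step. You claim
\[
\Phi_\mu\circ\ov H^i[q,m]_{(p,n)}=H^i_{(r,k)}\circ\Phi_\mu,
\]
and justify this by saying it reduces to matching the Casimir tensors slot by slot. What super duality actually gives you (via \propref{prop:isom} and \propref{prop:H=}) is the intertwining of the \emph{central-extension} Hamiltonians $\ovcH^i[q,m]$ and $\cH^i$. Passing from these to the Hamiltonians $\ov H^i[q,m]_{(p,n)}$ and $H^i_{(r,k)}$ for the Lie (super)algebras themselves is governed by \eqref{Om-Om}, which says the Casimir tensors differ by $(p-q)K\otimes K$ and $-rK\otimes K$ respectively. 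Since $K$ acts by the level $d_j$ on the $j$-th tensor factor and the levels match under super duality, your $\Phi_\mu\circ\ov H^i[q,m]_{(p,n)}\circ\Phi_\mu^{-1}$ differs from $H^i_{(r,k)}$ by the scalar operator
\[
\bigl((p-q)+r\bigr)\sum_{j\ne i}\frac{d_i d_j}{z_i-z_j}\cdot\mathrm{Id},
\]
which is nonzero in general. So $\Phi_\mu\circ\psi$ does \emph{not} solve the KZ equations on the $\cG_{(r,k)}$ side.

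The fix is exactly \propref{ncetoce}: one first multiplies $\psi$ by $\prod_{i<j}(z_i-z_j)^{(q-p)d_id_j/\kappa}$ to pass to the central-extension KZ equations, transports through $\Phi_\mu$ (where the exact intertwining \emph{does} hold, cf.\ \propref{sing-sol} and \propref{tr-KZ}), and then multiplies by $\prod_{i<j}(z_i-z_j)^{r d_id_j/\kappa}$ to land in the KZ equations for $\cG_{(r,k)}$. The resulting isomorphism is $\psi\mapsto f(z)\cdot(\Phi_\mu\circ\psi)$ for a suitable power function $f$, not simply $\psi\mapsto\Phi_\mu\circ\psi$. A minor secondary point: the theorem concerns arbitrary modules in $\mathring{\ov\OO}[q,m]_{(p,n)}$, not only unitarizable or irreducible ones, so the corresponding $\cG_{(r,k)}$-modules need not be finite-dimensional irreducible; your argument should not assume this.
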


This paper is organized as follows. In Section \ref{sec:pre}, we fix notation and discuss super duality relating the parabolic Bernstein--Gelfand--Gelfand (BGG) categories of modules over the central extensions of general linear Lie (super)algebras. We also introduce unitarizable modules, which play a special role in this paper. In Section \ref{sec:quad-G}, we investigate the (super) quadratic Gaudin Hamiltonians and the (super) KZ equations for the central extensions of finite-rank and infinite-rank general linear Lie (super)algebras. In Section \ref{sec:quad-cG}, we discuss the (super) quadratic Gaudin Hamiltonians and the (super) KZ equations for $\gl(p+m|q+n)$. Particularly, we prove \thmref{main-1} and \thmref{main-3}. In Section \ref{sec:cubic}, we study the (super) cubic Gaudin Hamiltonians for $\gl(m|n)$ and give a proof of \thmref{main-2}.

\vskip 0.3cm
\noindent{\bf Notations.} Throughout the paper, the symbols $\Z$, $\N$, $\Zp$, and $\Zs$
stand for the sets of all, positive, non-negative integers, and nonzero integers, respectively,
and the symbol $\C$ stands for the field of complex numbers.
All vector spaces, algebras, tensor products, etc., are over $\C$.
{\bf  We fix $q,m \in \Zp$, $p \in \Zp\cup\{\infty\}$ and $n \in \N\cup\{\infty\}$}.

\bigskip

\section{Preliminaries} \label{sec:pre}

In this section, we define the general linear Lie (super)algebras
$\wcG$, $\cG_{(p,n)}$ and $\ovcG[q,m]_{(p,n)}$
as well as their central extensions
$\DG$, $\G_{(p,n)}$ and ${\SG}[q,m]_{(p,n)}$.
We introduce the parabolic BGG category
$\w\OO_{(\infty,\infty)}$ (resp., $\OO_{(p,n)}$ and $\ov\OO[q,m]_{(p,n)}$)
of modules over $\DG$ (resp., $\G_{(p,n)}$ and ${\SG}[q,m]_{(p,n)}$)
and the truncation functor which relates the category
$\OO_{(\infty,\infty)}$ (resp., $\ov\OO[q,m]_{(\infty,\infty)}$)
for the infinite-rank Lie (super)algebra
$\G_{(\infty,\infty)}$ (resp., $\SG[q,m]_{(\infty,\infty)}$)
to the category
$\OO_{(p,n)}$ (resp., $\ov\OO[q,m]_{(p,n)}$)
for the finite-rank Lie (super)algebra
$\G_{(p,n)}$ (resp., $\SG[q,m]_{(p,n)}$).
We also discuss super duality which shows that the tensor functors
$T:\w\OO_{(\infty,\infty)} \to \OO_{(\infty,\infty)}$ and $\ov{T}_{[q,m]}: \w\OO_{(\infty,\infty)} \to \ov\OO[q,m]_{(\infty,\infty)}$
are equivalences of tensor categories. Finally, we describe unitarizable modules over $\ovcG[q,m]_{(p,n)}$.

\subsection{General linear (super)algebras} \label{gl}

Let $\w{V}$ denote the superspace over $\C$ with ordered basis $\{v_i \,|\, i \in \hf\Zs\}$,
where the parity of $v_i$ is given by $|v_i|:=\ov{2i} \in \Z_2$.

Let $\wcG:=\gl(\w{V})$ be the Lie superalgebra consisting of all linear endomorphisms on
$\w{V}$ which vanish on all but finitely many $v_i$'s.
For $i,j \in \hf\Zs$, we let $E_{i,j}$ be the linear endomorphism on $\w{V}$ defined by
$$E_{i,j}(v_r)=\delta_{j,r} v_i  \qquad \textrm{for } r \in \hf\Zs,$$
where $\delta$ is the Kronecker delta.
The set
$\{ E_{i,j} \, | \, i,j\in \hf\Zs\}$
is a basis for $\wcG$.
We let
$\w{\bf b}:=\bigoplus_{ i,j\in \hf\Zs, \, i\le j}\C E_{i,j}$
be the standard Borel subalgebra of $\wcG$.

Fix $q, m \in \Zp$, $p \in \Zp\cup\{\infty\}$ and $n \in \N\cup\{\infty\}$. Let
\begin{align*}
\w\I_{(p,n)}&=\setc*{i \in \hf \Zs}{-p \le i \le n},\\
\I_{(p,n)}&=\setc*{i \in \hf+\Z}{-p < i < n}, \\
\ov{\I}{[q, m]}_{(p, n)} &= \setc[\Big]{i \in \Zs}{-p \le i \le m } \cup \setc[\Big]{j \in \hf+\Z}{-q < j < n }.
\end{align*}
\textbf{We will drop the subscript $(p,n)$ if $(p,n)=(\infty, \infty)$.}
For instance,
$\ov{\I}{[q, m]}:=\ov{\I}{[q, m]}_{(\infty, \infty)}$.
We define the total order of
$\ov{\I}{[q, m]}$ as follows:
 \begin{align*}
\ldots&<_{\ov{\I}{[q, m]}}-2 <_{\ov{\I}{[q, m]}}-1 <_{\ov{\I}{[q, m]}}-(q-\hf)<_{\ov{\I}{[q, m]}}\ldots<_{\ov{\I}{[q, m]}}-\frac{3}{2}<_{\ov{\I}{[q, m]}}-\hf\\
&<_{\ov{\I}{[q, m]}} 1<_{\ov{\I}{[q, m]}}2 <_{\ov{\I}{[q, m]}} \ldots <_{\ov{\I}{[q, m]}}m <_{\ov{\I}{[q, m]}}\hf<_{\ov{\I}{[q, m]}}\frac{3}{2} <_{\ov{\I}{[q, m]}} \ldots
 \end{align*}

We define
$\wcG_{(p,n)}:=\bigoplus_{i,j\in \w\I_{(p,n)}} \C E_{i,j}$,
$\cG_{(p,n)}:=\bigoplus_{i,j \in \I_{(p,n)}}\C E_{i,j}$
and $\ovcG[q,m]_{(p,n)}:=\bigoplus_{i,j\in \ov{\I}[q,m]_{(p,n)}} \C E_{i,j}$.
Note that $\cG_{(p,n)} \cong \gl(0|p+n) \cong \gl(p+n)$ and $\ovcG[q,m]_{(p,n)} \cong \gl(p+m|q+n)$. In particular, $\ovcG[0,m]_{(0,n)} \cong \gl(m|n)$.

Let
$\w{\bf b}_{(p,n)}:=\w{\bf b}\cap\wcG_{(p,n)}$, ${\bf b}_{(p,n)}:=\w{\bf b}\cap\cG_{(p,n)}$ and $\ov{{\bf b}}[q,m]_{(p,n)}=\bigoplus_{i,j \in \ov\I[q,m]_{(p,n)}, i \le_{\ov\I[q,m]} j }\C E_{i,j}$
be the standard Borel subalgebras of $\wcG_{(p,n)}$, $\cG_{(p,n)}$ and $\ovcG[q,m]_{(p,n)}$, respectively,
and let $\w{\bf h}_{(p,n)}$, ${\bf h}_{(p,n)}$ and $\ov{\bf h}[q,m]_{(p,n)}$ be the corresponding Cartan subalgebras, which admit bases
$\{E_{i,i}\,|\, i \in \w\I_{(p,n)}\}$,
$\{E_{i,i}  \, | \, i \in \I_{(p,n)}\}$ and
$\{E_{i,i} \,|\,  i\in \ov{\I}[q,m]_{(p,n)}\}$,
respectively.
Let $\{\ep_i\}$ denote the dual bases of the Cartan subalgebras with the corresponding indices.
We will simply write
$$
  E_i:=E_{i,i} \qquad \mbox{for $i \in \hf \Z^*$}.
$$

\subsection{Dynkin diagrams} \label{Dynkin}
\label{dynkin}
Consider the free abelian group with basis $\{\ep_i \, | \, i \in \hf\Zs \}$. It is endowed with a symmetric bilinear form $(\cdot,\cdot)$ defined by
$$
(\ep_i,\ep_j)=(-1)^{2i}\delta_{i,j}, \qquad i,j \in \hf\Zs.
$$
Let
$$
\alpha_i=\ep_i-\ep_{i+\hf} \quad \mbox{ and }\quad \beta_{j}=\ep_j-\ep_{j+1},
$$
where $i,j \in \hf\Zs$, $i \not=-\hf$ and $j \not=-1$.
For $n\in \Zp$, let
\makebox(23,0){$\oval(20,14)$}\makebox(-20,8){$\mc{L}_n$},
\makebox(23,0){$\oval(20,14)$}\makebox(-20,8){$\ov{\mc{L}}_n$},
\makebox(23,0){$\oval(20,14)$}\makebox(-20,8){${\mc{R}}_n$} and
\makebox(23,0){$\oval(20,14)$}\makebox(-20,8){$\ov{\mc{R}}_n$}
denote the following Dynkin diagrams with prescribed fundamental systems:

\begin{center}
\hskip -3cm \setlength{\unitlength}{0.16in}
\begin{picture}(22,3)
\put(8,2){\makebox(0,0)[c]{$\bigcirc$}}
\put(10.4,2){\makebox(0,0)[c]{$\bigcirc$}}
\put(14.85,2){\makebox(0,0)[c]{$\bigcirc$}}
\put(17.25,2){\makebox(0,0)[c]{$\bigcirc$}}
\put(5.6,2){\makebox(0,0)[c]{$\bigcirc$}}
\put(8.4,2){\line(1,0){1.55}} \put(10.82,2){\line(1,0){0.8}}
\put(13.2,2){\line(1,0){1.2}} \put(15.28,2){\line(1,0){1.45}}
\put(6,2){\line(1,0){1.4}}
\put(12.5,1.95){\makebox(0,0)[c]{$\cdots$}}
\put(0,1.2){{\ovalBox(1.6,1.3){$\mc{L}_n$}}}
\put(5.5,1){\makebox(0,0)[c]{\tiny$\beta_{-n}$}}
\put(8,1){\makebox(0,0)[c]{\tiny$\beta_{1-n}$}}
\put(10.3,1){\makebox(0,0)[c]{\tiny$\beta_{2-n}$}}
\put(15,1){\makebox(0,0)[c]{\tiny$\beta_{-3}$}}
\put(17.2,1){\makebox(0,0)[c]{\tiny$\beta_{-2}$}}
\end{picture}
\end{center}
\begin{center}
\hskip -3cm \setlength{\unitlength}{0.16in}
\begin{picture}(22,3)
\put(8,2){\makebox(0,0)[c]{$\bigcirc$}}
\put(10.4,2){\makebox(0,0)[c]{$\bigcirc$}}
\put(14.85,2){\makebox(0,0)[c]{$\bigcirc$}}
\put(17.25,2){\makebox(0,0)[c]{$\bigcirc$}}
\put(5.6,2){\makebox(0,0)[c]{$\bigcirc$}}
\put(8.4,2){\line(1,0){1.55}} \put(10.82,2){\line(1,0){0.8}}
\put(13.2,2){\line(1,0){1.2}} \put(15.28,2){\line(1,0){1.45}}
\put(6,2){\line(1,0){1.4}}
\put(12.5,1.95){\makebox(0,0)[c]{$\cdots$}}
\put(0,1.2){{\ovalBox(1.6,1.3){$\ov{\mc{L}}_n$}}}
\put(5.5,1){\makebox(0,0)[c]{\tiny$\beta_{\hf-n}$}}
\put(8,1){\makebox(0,0)[c]{\tiny$\beta_{\frac{3}{2}-n}$}}
\put(10.3,1){\makebox(0,0)[c]{\tiny$\beta_{\frac{5}{2}-n}$}}
\put(15,1){\makebox(0,0)[c]{\tiny$\beta_{-\frac{5}{2}}$}}
\put(17.2,1){\makebox(0,0)[c]{\tiny$\beta_{-\frac{3}{2}}$}}
\end{picture}
\end{center}
\begin{center}
\hskip -3cm \setlength{\unitlength}{0.16in}
\begin{picture}(22,3)
\put(8,2){\makebox(0,0)[c]{$\bigcirc$}}
\put(10.4,2){\makebox(0,0)[c]{$\bigcirc$}}
\put(14.85,2){\makebox(0,0)[c]{$\bigcirc$}}
\put(17.25,2){\makebox(0,0)[c]{$\bigcirc$}}
\put(5.6,2){\makebox(0,0)[c]{$\bigcirc$}}
\put(8.4,2){\line(1,0){1.55}} \put(10.82,2){\line(1,0){0.8}}
\put(13.2,2){\line(1,0){1.2}} \put(15.28,2){\line(1,0){1.45}}
\put(6,2){\line(1,0){1.4}}
\put(12.5,1.95){\makebox(0,0)[c]{$\cdots$}}
\put(0,1.2){{\ovalBox(1.6,1.2){$\mc{R}_n$}}}
\put(5.5,1){\makebox(0,0)[c]{\tiny$\beta_{1}$}}
\put(8,1){\makebox(0,0)[c]{\tiny$\beta_{2}$}}
\put(10.3,1){\makebox(0,0)[c]{\tiny$\beta_{3}$}}
\put(15,1){\makebox(0,0)[c]{\tiny$\beta_{n-2}$}}
\put(17.2,1){\makebox(0,0)[c]{\tiny$\beta_{n-1}$}}
\end{picture}
\end{center}
\begin{center}
\hskip -3cm \setlength{\unitlength}{0.16in}
\begin{picture}(22,3)
\put(8,2){\makebox(0,0)[c]{$\bigcirc$}}
\put(10.4,2){\makebox(0,0)[c]{$\bigcirc$}}
\put(14.85,2){\makebox(0,0)[c]{$\bigcirc$}}
\put(17.25,2){\makebox(0,0)[c]{$\bigcirc$}}
\put(5.6,2){\makebox(0,0)[c]{$\bigcirc$}}
\put(8.4,2){\line(1,0){1.55}} \put(10.82,2){\line(1,0){0.8}}
\put(13.2,2){\line(1,0){1.2}} \put(15.28,2){\line(1,0){1.45}}
\put(6,2){\line(1,0){1.4}}
\put(12.5,1.95){\makebox(0,0)[c]{$\cdots$}}
\put(0,1.2){{\ovalBox(1.6,1.2){$\ov{\mc{R}}_n$}}}
\put(5.5,1){\makebox(0,0)[c]{\tiny$\beta_{\hf}$}}
\put(8,1){\makebox(0,0)[c]{\tiny$\beta_{\frac{3}{2}}$}}
\put(10.3,1){\makebox(0,0)[c]{\tiny$\beta_{\frac{5}{2}}$}}
\put(15,1){\makebox(0,0)[c]{\tiny$\beta_{n-\frac{5}{2}}$}}
\put(17.2,1){\makebox(0,0)[c]{\tiny$\beta_{n-\frac{3}{2}}$}}
\end{picture}
\end{center}
For $n=\infty$, the Dynkin diagrams of
\makebox(23,0){$\oval(20,14)$}\makebox(-20,8){$\mc{L}_n$},
\makebox(23,0){$\oval(20,14)$}\makebox(-20,8){$\ov{\mc{L}}_n$},
\makebox(23,0){$\oval(20,14)$}\makebox(-20,8){${\mc{R}}_n$} and
\makebox(23,0){$\oval(20,14)$}\makebox(-20,8){$\ov{\mc{R}}_n$} are
denoted by \makebox(23,0){$\oval(20,14)$}\makebox(-20,8){$\mc{L}$},
\makebox(23,0){$\oval(20,14)$}\makebox(-20,8){$\ov{\mc{L}}$},
\makebox(23,0){$\oval(20,14)$}\makebox(-20,8){${\mc{R}}$} and
\makebox(23,0){$\oval(20,14)$}\makebox(-20,8){$\ov{\mc{R}}$},
respectively.

The Dynkin diagrams of
$\wcG$, $\cG_{(p,n)}$ and $\ovcG[q,m]_{(p,n)}$
equipped with prescribed fundamental systems are given below.
\bigskip

\begin{center}
\hskip 0.5cm \setlength{\unitlength}{0.16in}
\begin{picture}(22,3)

\put(-4,1.2){{\ovalBox(1.8,1.4){$\wcG$}}}

\put(5.6,2){\makebox(0,0)[c]{$\bigotimes$}}
\put(8,2){\makebox(0,0)[c]{$\bigotimes$}}
\put(10.4,2){\makebox(0,0)[c]{$\bigcirc$}}
\put(12.8,2){\makebox(0,0)[c]{$\bigotimes$}}
\put(15.2,2){\makebox(0,0)[c]{$\bigotimes$}}
\put(17.6,2){\makebox(0,0)[c]{$\bigotimes$}}
\put(3.65,2){\line(1,0){1.5}}
\put(6,2){\line(1,0){1.5}}\put(8.4,2){\line(1,0){1.5}}
\put(10.8,2){\line(1,0){1.5}} \put(13.2,2){\line(1,0){1.5}}
\put(15.6,2){\line(1,0){1.5}} \put(18,2){\line(1,0){1.5}}
\put(2.7,1.95){\makebox(0,0)[c]{$\cdots$}}
\put(20.3,1.95){\makebox(0,0)[c]{$\cdots$}}
\put(5.4,1){\makebox(0,0)[c]{\tiny $\alpha_{-\frac{3}{2}}$}}
\put(8,1){\makebox(0,0)[c]{\tiny $\alpha_{-1}$}}
\put(10.3,1){\makebox(0,0)[c]{\tiny $\ep_{-\hf}-\ep_{\hf}$}}
\put(12.8,1){\makebox(0,0)[c]{\tiny $\alpha_{\hf}$}}
\put(15.2,1){\makebox(0,0)[c]{\tiny $\alpha_{1}$}}
\put(17.7,1){\makebox(0,0)[c]{\tiny $\alpha_{\frac{3}{2}}$}}
\end{picture}
\end{center}

\bigskip
\begin{center}
\hskip -2cm \setlength{\unitlength}{0.16in}
\begin{picture}(22,1)

\put(-1.5,-0.3){{\ovalBox(3,1.4){$\cG_{(p,n)}$}}}

\put(9.25,0.5){\makebox(0,0)[c]
{{\ovalBox(1.6,1.3){$\ov{\mc{L}}_{p}$}}}}
\put(10.05,0.5){\line(1,0){1.85}}
\put(12.35,0.5){\makebox(0,0)[c]{$\bigcirc$}}
\put(12.8,0.5){\line(1,0){1.85}}
\put(15.45,0.5){\makebox(0,0)[c]
{{\ovalBox(1.6,1.3){$\ov{\mc{R}}_n$}}}}
\put(12.5,-0.5){\makebox(0,0)[c]
{\tiny$\ep_{-\hf}-\ep_{\hf}$}}
\end{picture}
\end{center}

\bigskip
\begin{center}
\hskip -2cm \setlength{\unitlength}{0.16in}
\begin{picture}(22,1)

\put(-3,-0.3){{\ovalBox(5.5,1.5){$\ovcG[q,m]_{(p,n)}$}}}

\put(5.25,0.5){\makebox(0,0)[c]{{\ovalBox(1.6,1.3){$\mc{L}_p$}}}}
\put(6.05,0.5){\line(1,0){1.85}}
\put(8.35,0.5){\makebox(0,0)[c]{$\bigotimes$}}
\put(8.8,0.5){\line(1,0){1.85}}
\put(11.45,0.5){\makebox(0,0)[c]{{\ovalBox(1.6,1.3){$\ov{\mc{L}}_q$}}}}
\put(12.25,0.5){\line(1,0){1.85}}
\put(14.55,0.5){\makebox(0,0)[c]{$\bigotimes$}}
\put(15.0,0.5){\line(1,0){1.85}}
\put(17.65,0.5){\makebox(0,0)[c]{{\ovalBox(1.6,1.3){${\mc{R}}_m$}}}}
\put(18.45,0.5){\line(1,0){1.85}}
\put(20.75,0.5){\makebox(0,0)[c]{$\bigotimes$}}
\put(21.2,0.5){\line(1,0){1.85}}
\put(23.85,0.5){\makebox(0,0)[c]{{\ovalBox(1.6,1.3){$\ov{\mc{R}}_n$}}}}
\put(8.5,-0.5){\makebox(0,0)[c]{\tiny$\ep_{-1}-\ep_{-q+\hf}$}}
\put(14.5,-0.5){\makebox(0,0)[c]{\tiny$\ep_{-\hf}-\ep_{1}$}}
\put(20.7,-0.5){\makebox(0,0)[c]{\tiny$\ep_{m}-\ep_{\hf}$}}
\end{picture}
\end{center}
\bigskip
Here $\bigotimes$ denotes an odd isotropic simple root.

\subsection{Central extensions}\label{Central Ext}

The general linear Lie superalgebra $\wcG$ has a central extension, denoted by $\DG$, by the one-dimensional center $\C K$  corresponding to the following $2$-cocycle (cf. \cite[p. 99]{CL03}):
$$
\tau(A, B):=\text{Str}([J,A]B),\qquad \mbox{$A,B\in \wcG$,}
$$
where $J:=-\sum_{r \leq -\hf} E_{r}$ and $\text{Str}$ denotes the supertrace. The cocycle $\tau$ is a coboundary, and there is a Lie superalgebra isomorphism $\iota: \wcG\oplus \C K \to \DG$ defined by
\begin{equation}\label{iso-e}
  \iota(A)=A+\mbox{Str}(JA)K,\quad\mbox{for $A \in \wcG$, \quad and \quad $\iota(K)=K$.}
\end{equation}

The central extension
$\G_{(p,n)}$ (resp., $\SG[q,m]_{(p,n)}$) of $\cG_{(p,n)}$ (resp., $\ovcG[q,m]_{(p,n)}$)
is obtained by restricting $\tau$ to $\cG_{(p,n)} \times \cG_{(p,n)}$ (resp., $\ovcG[q,m]_{(p,n)} \times \ovcG[q,m]_{(p,n)}$).
The restriction of the isomorphism $\iota$ to
$\cG_{(p,n)} \oplus \C K$
(resp., $\ovcG[q,m]_{(p,n)} \oplus \C K$)
is an isomorphism
$\iota : \cG_{(p,n)} \oplus \C K \to \G_{(p,n)}$
(resp., $\iota : \ovcG[q,m]_{(p,n)} \oplus \C K \to \SG[q,m]_{(p,n)}$).

The isomorphism \eqnref{iso-e}
allows us to view every
$\DG$(resp., $\G_{(p,n)}$ and $\SG[q,m]_{(p,n)}$)-module as a
$\wcG$(resp., $\cG_{(p,n)}$ and $\ovcG[q,m]_{(p,n)}$)-module.
These central extensions are convenient and conceptual for the formulation of truncation functors and super duality described in Section \ref{SD}; see \cite[Remark 3.3]{CLW11} for further explanations.

\begin{rem} \label{p=q=0}
As $[J, A]=0$ for $A \in \wcG_{(0, \infty)}$, we see that $[A, B]_{\DG_{(0, \infty)}}=[A,B]$ for $A, B\in \wcG_{(0, \infty)}$.
Here $[\cdot, \cdot]_{\DG_{(0, \infty)}}$ denotes the Lie bracket on $\DG_{(0, \infty)}$.
\end{rem}

Let
$\w{\mf b}:=\w{\bf b}\oplus \C K$,
${\mf b}_{(p,n)}:={\bf b}_{(p,n)}\oplus \C K$
and
$\ov{\mf b}[q,m]_{(p,n)}:=\ov{\bf b}[q,m]_{(p,n)} \oplus \C K$
be the standard Borel subalgebras of
$\DG$, $\G_{(p,n)}$ and $\SG[q,m]_{(p,n)}$,
respectively,
and let
$\wfh$,
$\fh_{(p,n)}$ and $\ovfh[q,m]_{(p,n)}$
denote the corresponding Cartan subalgebras of
$\DG$, $\G_{(p,n)}$ and $\SG[q,m]_{(p,n)}$
spanned by bases
$\{K, E_i \}$
with dual bases $\{\Lambda_0,\ep_i \}$ in the restricted dual
$\wfh^{*}$, $\fh_{(p,n)}^*$ and $\ovfh[q,m]_{(p,n)}^*$, where $i$ runs over the index sets
$\hf \Zs$, $\I_{(p,n)}$ and $\ov{\I}[q,m]_{(p,n)}$, respectively.
Here $\La_0$ is the element of $\wfh^{*}$
(resp., $\fh_{(p,n)}^*$ and $\ovfh[q,m]_{(p,n)}^*$) defined by
 $$
 \La_0(K)=1 \,\,\, \textrm{ and }   \,\,\,\La_0(E_i)=0
 $$
  for all $i \in \hf \Zs$ (resp., $\I_{(p,n)}$ and $\ov{\I}[q,m]_{(p,n)}$).

\subsection{Super duality} \label{SD}

We refer the readers to \cite{CLW12} for details on materials in this subsection.

Hereafter for $r \in \Zp$, we denote
$$\langle r  \rangle:=\max\{r, 0\}.$$
Given a partition $\mu=(\mu_1,\mu_2,\ldots)$, we denote by $\ell(\mu)$ the length of $\mu$ and by $\mu'$ the conjugate partition of $\mu$.
We also denote by $\theta(\mu)$ the modified Frobenius coordinates of $\mu$:
\begin{equation*}
\theta(\mu)
:=(\theta(\mu)_{1/2},\theta(\mu)_1,\theta(\mu)_{3/2},\theta(\mu)_2,\ldots),
\end{equation*}
where
$\theta(\mu)_{i-1/2}:=\langle\mu'_i-i+1 \rangle$ and
$\theta(\mu)_i:=\langle \mu_i-i \rangle$ for $i \in \N$.

For any partitions $\la^+=(\la^+_1,\la^+_2,\ldots)$ and $\la^-=(\la^-_1,\la^-_2,\ldots)$, and $d \in \C$, we set
\begin{align}
\w{\la} &:= -\sum_{r \in \hf \N} \theta (\la^-)_r \ep_{-r} +\sum_{i  \in \hf \N}\theta(\la^+)_i \ep_i + d \La_0\in \wfh^{*},\label{weight:wtIm}\\
\la &:= -\sum_{r \in \N}(\la^-)^\prime_r \ep_{-r+\hf} +\sum_{i  \in \N}(\la^+)'_i \ep_{i-\hf}+d \La_0\in \fh^{*},\label{weight:m}\\
\ov\la[q,m] &:=-\sum_{r \in \N} \langle \la^{-}_{r}-q \rangle \ep_{-r}- \sum_{s=1}^{q} (\la^{-})^\prime_{s} \ep_{-s+\hf}  \label{weight:Im} \\  \nonumber
& \hspace{1cm} +\sum_{i=1}^{m} \la^+_{i}\ep_{i}+ \sum_{j \in \N}\langle (\la^+)'_{j}-m \rangle\ep_{j-\hf}+ d\La_0 \in \ovfh[q,m]^*.
\end{align}

Denote by
$\w{\cP}^+(d)  \subseteq \wfh^*$,
${\cP}^+(d)  \subseteq \fh^*$
and $\ov{\cP}[q,m]^+(d)  \subseteq \ovfh[q,m]^*$
the sets of all weights of the forms \eqref{weight:wtIm}, \eqref{weight:m} and \eqref{weight:Im}, respectively.
Let $\w{\cP}^+=\cup_{d \in \C} \w{\cP}^+(d)$,
${\cP}^+=\cup_{d \in \C} {\cP}^+(d)$
and $\ov{\cP}[q,m]^+=\cup_{d \in \C} \ov{\cP}[q,m]^+(d)$.
By definition, we have the bijections
\begin{equation}\label{cP}
\begin{array}{ll}
\w{\cP}^+ \longrightarrow {\cP}^+ & \raisebox{-4pt}{and}\ \ \ \ \ \
\w{\cP}^+ \longrightarrow \ov{\cP}[q,m]^+\\
 \ \     \w\la\ \mapsto \ \la &
 \ \   \ \ \ \ \ \ \ \ \  \ \  \w\la\ \mapsto \ \ovla[q,m].
\end{array}
\end{equation}
If $\theta (\la^+)_{n+\hf}=\theta (\la^-)_{p+\hf}=0$ (resp., $(\la^+)'_{n+1}=(\la^-)'_{p+1}=0$, and $(\la^+)'_{n+1}\leq m$ and $\la^{-}_{p+1}\leq q$),
then we may view $\wla  \in \w \fh_{(p,n)}^*$ (resp., $\la  \in \fh_{(p,n)}^*$ and $\ovla[q, m] \in \ovfh[m]_{(p,n)}^*$) in a natural way. The set of all such weights is denoted by $\w \cP^+_{(p,n)}$ (resp., $\cP^+_{(p,n)}$ and $\ov{\cP}[q,m]^+_{(p,n)}$).

We denote by
$\w{\Phi}^+$ (resp., ${\Phi}_{(p,n)}^+$ and $\ov{\Phi}[q,m]_{(p,n)}^+$)
the set of all positive roots of
$\DG$ (resp., $\G_{(p,n)}$ and $\SG[q,m]_{(p,n)}$).
Let
\begin{align}
\w{Y}&=\setc*{\ep_i-\ep_j }{i,j\in \hf \Zs,\, i<j \le -\hf \ \  \mbox{or}\ \  \hf \le i<j} \subseteq \w{\Phi}^+, \nonumber \\
{Y}_{(p,n)}&=\setc*{\ep_i-\ep_j }{i,j\in \I_{(p,n)},\, i<j \le -\hf \ \  \mbox{or}\ \  \hf \le i<j } \subseteq {\Phi}_{(p,n)}^+, \label{Levi}\\
\ov{Y}[q,m]_{(p,n)}&=\setc*{\ep_i-\ep_j }{i,j\in \ov\I[q,m]_{(p,n)},\, i<_{\ov{\I}{[q, m]}}j \le_{\ov{\I}{[q, m]}} -\hf \ \  \mbox{or}\ \  \hf \le_{\ov{\I}{[q, m]}} i<_{\ov{\I}{[q, m]}} j }\nonumber\\ &\subseteq \ov{\Phi}[q,m]_{(p,n)}^+.\nonumber
\end{align}
Let
$$\w{\mf l}=\w{\mf h}\oplus  \bigoplus\limits_{\pm\alpha\in\w{Y}} \DG_{\alpha},
 \ \ \ \ {\mf{l}}_{(p,n)}={\mf h}_{(p,n)}\oplus \bigoplus\limits_{\pm\alpha\in{Y}_{(p,n)}} \left( \G_{(p,n)} \right)_{\! \alpha}$$
and
$$\ov{\mf{l}}[q,m]_{(p,n)}= \ovfh[q,m]_{(p,n)} \oplus \bigoplus\limits_{\pm\alpha\in\ov{Y}[q,m]_{(p,n)}} \left(\SG[q,m]_{(p,n)} \right)_{\! \alpha}$$
be the Levi subalgebras of
$\DG$, $\G_{(p,n)}$ and $\SG[q,m]_{(p,n)}$
associated to $\w{Y}$, $Y_{(p,n)}$ and $\ov{Y}[q,m]_{(p,n)}$, respectively,
and let
$\w{\mf p}=\tilde{\mf l}+\w{\mf b}$,
${\mf{p}}_{(p,n)}={\mf{l}}_{(p,n)}+{\mf b}_{(p,n)}$
and $\ov{\mf{p}}[q,m]_{(p,n)}=\ov{\mf{l}}[q,m]_{(p,n)}+\ov{\mf b}[q,m]_{(p,n)}$ be the corresponding parabolic subalgebras.

For $\mu\in \w{\mf h}^*$, let $L(\tilde{\mf l}, \mu)$ be the irreducible highest weight $\tilde{\mf l}$-module with highest weight $\mu$.
We denote by ${\Delta}(\DG,\mu):=\mbox{Ind}^{\DG}_{\w{\mf p}} L(\tilde{\mf l}, \mu)$
the parabolic Verma $\DG$-module
and by
$L(\DG, \mu)$
the unique irreducible quotient
$\DG$-module of
$\Delta(\DG, \mu)$.
For $\mu \in \fh_{(p,n)}^*$
(resp., $\ovfh[q,m]_{(p,n)}^*$),
the modules
$L({\mf l}_{(p,n)}, \mu)$
(resp., $L(\ov{\mf l}[q,m]_{(p,n)}, \mu)$)
and
$\Delta(\G_{(p,n)}, \mu)$
(resp., $\Delta(\SG[q,m]_{(p,n)}, \mu)$)
are defined in a similar way.
We denote by
$L(\G_{(p,n)}, \mu)$
(resp., $L(\SG[q,m]_{(p,n)}, \mu)$)
the unique irreducible quotient
$\G_{(p,n)}$(resp., $\SG[q,m]_{(p,n)}$)-module of $\Delta(\G_{(p,n)}, \mu)$
(resp., $\Delta(\SG[q,m]_{(p,n)}, \mu)$).

Analogous to \cite{CL10, CLW11,CLW12},
we let
$\w\OO$
(resp., $\OO_{(p,n)}$ and $\ov\OO[q,m]_{(p,n)}$)
be the category of
$\DG$(resp., $\G_{(p,n)}$ and $\SG[q,m]_{(p,n)}$)-modules $M$ such that $M$ is a semisimple
$\w{\mf h}$(resp., ${\mf h}_{(p,n)}$ and $\ov{\mf h}[q,m]_{(p,n)}$)-module with finite-dimensional weight subspaces $M_{\gamma}$,
where $\gamma \in \w{\mf h}^*$
(resp., ${\mf h}_{(p,n)}^*$ and $\ov{\mf h}[q,m]_{(p,n)}^*$), satisfying the conditions:
\begin{enumerate}[\normalfont(i)]

\item $M$ decomposes over $\tilde{\mf l}$
(resp., ${\mf l}_{(p,n)}$ and
 $\ov{\mf l}[q,m]_{(p,n)}$)
 as a direct sum of
$L(\tilde{\mf l}, \mu)$
(resp., $L({\mf l}_{(p,n)}, \mu)$ and
$L(\ov{\mf l}[q,m]_{(p,n)}, \mu)$) for $\mu\in \w{\cP}^+$
(resp., ${\cP}^+_{(p,n)}$ and $\ov{\cP}[q,m]^+_{(p,n)}$).

\item There exist finitely many weights $\la^1,\la^2,\ldots,\la^k\in \w{\cP}^+$
(resp., ${\cP}^+_{(p,n)}$ and $\ov{\cP}[q,m]^+_{(p,n)}$) (depending on $M$) such that if $\gamma$ is a weight of $M$, then
$\la^i-\gamma$ is a linear combination of simple roots with coefficients in $\Zp$ for some $i$.

   \end{enumerate}
The morphisms in the categories are even homomorphisms of modules, and the categories are abelian. Also, there is a natural $\Z_2$-gradation on each module in the categories with a compatible action of the corresponding Lie (super)algebra to be defined below.

Let us set
 \begin{align} \nonumber
\w{\Xi}_{(p,n)}&=-\sum_{r \in  \w\I_{(p,n)} \cap \hf \N} \Zp\ep_{-r}+\sum_{i \in \w\I_{(p,n)} \cap\hf \N} \Zp\ep_{i}+\C\Lambda_0,  \\ \label{weight}
{\Xi}_{(p,n)}&=-\sum_{r \in{\I}_{(p,n)}\cap \hf \N} \Zp\ep_{-r}+\sum_{i \in{\I}_{(p,n)}\cap \hf \N} \Zp\ep_{i}+\C\Lambda_0, \\ \nonumber
  \ov{\Xi}[q,m]_{(p,n)}&=-\sum_{r \in  \ov{\I}[q,m]_{(p,n)} \cap \hf \N} \Zp\ep_{-r}+\sum_{i \in \ov{\I}[q,m]_{(p,n)} \cap\hf \N} \Zp\ep_{i}+\C\Lambda_0.
 \end{align}
For $\varepsilon=0$ or $1$, and $\Theta=\w{\Xi}$, ${\Xi}_{(p,n)}$ or ${\ov\Xi}[q,m]_{(p,n)}$, we define
$$
{\Theta}(\ov\varepsilon):=
\setc[\Big]{\mu\in {\Theta} }{ \sum_{i \in \Z}\mu(E_{i+\hf})\equiv \varepsilon \,\,(\text{mod }2)}.
$$
For $M\in \w\OO$
(resp., ${\OO}_{(p,n)}$ and
$\ov\OO[q,m]_{(p,n)}$),
all the weights of $M$ lie in $\w{\Xi}$ (resp., ${\Xi}_{(p,n)}$ and $\ov{\Xi}[q,m]_{(p,n)}$)
(see the paragraph preceding \cite[Theorem 6.4]{CW}). For $M\in \w\OO$, ${M}={M}_{\ov{0}}\bigoplus {M}_{\ov{1}}$  is a $\Z_2$-graded vector space, where
 \begin{equation}\label{wt-Z2-gradation}
{M}_{\ov{0}}:=\bigoplus_{\mu \in \w{\Xi}(\ov 0)}{M}_{\mu}\qquad\mbox{and}\qquad
{M}_{\ov{1}}:=\bigoplus_{\mu \in \w{\Xi}(\ov 1)}{M}_{\mu}.
 \end{equation}
The $\Z_2$-gradation on $M$ is clearly compatible with the action of $\DG$.
Similarly, we may define a $\Z_2$-gradation with a compatible action of ${\mf g}_{(p,n)}$ (resp., $\ov{\mf g}[q,m]_{(p,n)}$) on ${M}$ for $M \in {\OO}_{(p,n)}$ (resp., $\ov\OO[q,m]_{(p,n)}$).
By \cite[Theorem 3.27 and Theorem 6.4]{CW} (see also the proof of \cite[Theorem 6.2.2]{Lus}), $\w\OO$, $\OO_{(p,n)}$ and $\ov\OO[q,m]_{(p,n)}$ are tensor categories.
Note that the $\Z_2$-gradation on $M\otimes N$ given by \eqnref{wt-Z2-gradation} and the $\Z_2$-gradation on $M\otimes N$ induced from the $\Z_2$-gradations on $M$ and $N$ given by
\eqref{wt-Z2-gradation} are the same for $M, N\in \w\OO$ (resp., $\OO_{(p,n)}$ and $\ov\OO[q,m]_{(p,n)}$).
Our discussion can be summarized as follows.

\begin{prop} \label{weight-Xi}
\begin{enumerate} [\normalfont(i)]

\item The weights of modules in $\w\OO$ (resp., $\OO_{(p,n)}$ and $\ov\OO[q,m]_{(p,n)}$) are contained in $\w{\Xi}$ (resp., ${\Xi}_{(p,n)}$ and $\ov{\Xi}[q,m]_{(p,n)}$).

\item $\w\OO$, $\OO_{(p,n)}$ and $\ov\OO[q,m]_{(p,n)}$ are tensor categories.

\end{enumerate}
\end{prop}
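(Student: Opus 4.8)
The plan is to establish the two assertions separately: the first by unwinding the definition of the categories and using the structure of modules over the infinite-rank Levi subalgebras, and the second by combining the tensor-product combinatorics of \cite{CW} with a short compatibility check for the $\Z_2$-gradation.

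For the first assertion, I would argue as follows. Fix $M\in\w\OO$; by condition (i) in the definition of $\w\OO$, $M$ decomposes over $\tilde{\mf l}$ as a direct sum of modules $L(\tilde{\mf l},\mu)$ with $\mu\in\w\cP^+$, so it suffices to show that every weight of each such $L(\tilde{\mf l},\mu)$ lies in $\w\Xi$. Writing $\mu=\wla$ as in \eqref{weight:wtIm}, the Levi $\tilde{\mf l}$ is, modulo its center $\C K$, the direct sum of the general linear Lie superalgebras acting on the span of $\{v_i:i<0\}$ and on the span of $\{v_i:i>0\}$; on these two factors $\wla$ restricts to the weights $-\sum_r\theta(\la^-)_r\ep_{-r}$ and $\sum_i\theta(\la^+)_i\ep_i$, while $K$ acts by the scalar $d$. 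Since the modified Frobenius coordinates of a partition $\kappa$ are non-negative integers with $\sum_i\theta(\kappa)_i=|\kappa|$, each tensor factor of $L(\tilde{\mf l},\mu)$ is one of the irreducible modules occurring in a tensor power of the natural module, resp.\ of its restricted dual; hence its weights have $\ep_i$-coefficient in $\Zp$ for $i>0$, in $-\Zp$ for $i<0$, and $\La_0$-coefficient equal to $d$, i.e.\ they lie in $\w\Xi$. Replacing $\hf\Zs$ by $\I_{(p,n)}$, resp.\ by $\ov\I[q,m]_{(p,n)}$, and \eqref{weight:wtIm} by \eqref{weight:m}, resp.\ by \eqref{weight:Im}, the same argument yields the assertion for $\OO_{(p,n)}$ and $\ov\OO[q,m]_{(p,n)}$; this is essentially the content of the paragraph preceding \cite[Theorem 6.4]{CW}.

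For the second assertion, I would verify that each category is closed under $\otimes$, contains a unit object, and inherits the associativity and unit constraints from the category of vector superspaces. The substantial point is closure under $\otimes$: for $M$ and $N$ in the category, $M\otimes N$ decomposes over $\tilde{\mf l}$ into summands of the form $L(\tilde{\mf l},\mu)\otimes L(\tilde{\mf l},\nu)$, and each of these is a direct sum of modules $L(\tilde{\mf l},\kappa)$ with $\kappa\in\w\cP^+$ by the (super Littlewood--Richardson type) decomposition of \cite[Theorem 3.27]{CW}; the finiteness condition (ii) in the definition of the category is then preserved, as in \cite{CLW12} (equivalently, by \cite[Theorem 6.4]{CW}). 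The unit object is the trivial module $\C=L(\tilde{\mf l},0)$, which lies in each category, and there is nothing further to check for the constraints. It remains to reconcile the $\Z_2$-gradation on $M\otimes N$ coming from \eqref{wt-Z2-gradation} and the splitting $\w\Xi=\w\Xi(\ov{0})\sqcup\w\Xi(\ov{1})$ with the $\Z_2$-gradation induced from those on $M$ and $N$. This follows from the additivity of $\varepsilon\mapsto\w\Xi(\ov{\varepsilon})$: if $\mu\in\w\Xi(\ov{\varepsilon_1})$ and $\nu\in\w\Xi(\ov{\varepsilon_2})$, then $\sum_{i\in\Z}(\mu+\nu)(E_{i+\hf})\equiv\varepsilon_1+\varepsilon_2\pmod{2}$, so $M_\mu\otimes N_\nu\subseteq(M\otimes N)_{\mu+\nu}$ lies in the $\ov{\varepsilon_1+\varepsilon_2}$ component; summing over weights identifies the two gradations, and compatibility with the $\DG$-action is immediate from the coproduct. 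The categories $\OO_{(p,n)}$ and $\ov\OO[q,m]_{(p,n)}$ are treated identically.

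The step I expect to be the main obstacle is the closure under tensor products --- concretely, that $L(\tilde{\mf l},\mu)\otimes L(\tilde{\mf l},\nu)$ again decomposes into irreducible $\tilde{\mf l}$-modules of the prescribed form and that finitely many dominating weights suffice for $M\otimes N$. This is a genuinely combinatorial fact, and it is exactly here that one must invoke \cite[Theorem 3.27 and Theorem 6.4]{CW}; everything else reduces to bookkeeping with weights and parities.
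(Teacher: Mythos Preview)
Your proposal is correct and follows essentially the same approach as the paper: part (i) via the $\tilde{\mf l}$-decomposition and the reference to the paragraph preceding \cite[Theorem 6.4]{CW}, part (ii) via \cite[Theorem 3.27 and Theorem 6.4]{CW} together with the additivity check for the $\Z_2$-gradation on tensor products. The paper presents Proposition~\ref{weight-Xi} as a summary of the preceding discussion rather than giving a separate proof, and your write-up simply unpacks that discussion with a bit more detail on why the weights of $L(\tilde{\mf l},\mu)$ lie in $\w\Xi$.
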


We also have the following proposition (cf. \cite[Lemma 4.1]{CLW12}).

\begin{prop} \label{tensor-cat}
\begin{enumerate}[\normalfont(i)]
  \item A highest weight $\DG$-module with highest weight $\la$ lies in $\w\OO$ if and only if  $\la \in \w \cP^+$.
	
	\item A highest weight $\G_{(p,n)}$-module with highest weight $\la$ lies in $\OO_{(p,n)}$  if and only if  $\la \in \cP_{(p,n)}^+$.
	
 \item A highest weight $\SG[q,m]_{(p,n)}$-module with highest weight $\la$ lies in $\ov\OO[q,m]_{(p,n)}$ if and only if  $\la \in \ov \cP[q,m]_{(p,n)}^+$.
		
\end{enumerate}
\end{prop}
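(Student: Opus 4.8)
My plan is to prove part~(i); parts~(ii) and~(iii) then follow from the identical argument after substituting $(\DG,\w{\mf l},\w{\mf p},\w\cP^+)$ by $(\G_{(p,n)},{\mf l}_{(p,n)},{\mf p}_{(p,n)},\cP^+_{(p,n)})$ and by $(\SG[q,m]_{(p,n)},\ov{\mf l}[q,m]_{(p,n)},\ov{\mf p}[q,m]_{(p,n)},\ov\cP[q,m]^+_{(p,n)})$ respectively. For the direction $(\Rightarrow)$, let $M\in\w\OO$ be a highest weight module with highest weight vector $v_\la$ of weight $\la$. Since $v_\la$ is a highest weight vector for $\w{\mf l}$, the $\w{\mf l}$-submodule $U(\w{\mf l})v_\la$ is a highest weight $\w{\mf l}$-module with one-dimensional $\la$-weight space; being an $\w{\mf l}$-submodule of $M$, which by condition~(i) of the definition of $\w\OO$ is semisimple over $\w{\mf l}$, it is itself semisimple, hence --- being cyclic --- irreducible, so $U(\w{\mf l})v_\la\cong L(\w{\mf l},\la)$. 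Thus $L(\w{\mf l},\la)$ occurs as an $\w{\mf l}$-constituent of $M$, and condition~(i) forces $\la\in\w\cP^+$.

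For the direction $(\Leftarrow)$, assume $\la\in\w\cP^+$; I would show that the parabolic Verma module $\Delta(\DG,\la)=\mbox{Ind}^{\DG}_{\w{\mf p}}L(\w{\mf l},\la)$ lies in $\w\OO$. Granting this, its irreducible quotient $L(\DG,\la)$, a highest weight module of highest weight $\la$, also lies in $\w\OO$, since conditions~(i) and~(ii) in the definition of $\w\OO$ are inherited by quotients. Condition~(ii) for $\Delta(\DG,\la)$ holds with the single weight $\la\in\w\cP^+$, because every weight of this highest weight module is $\la$ minus a $\Zp$-combination of simple roots. For the remaining requirements one invokes the PBW theorem: the nilradical $\w{\mf u}^-$ of the parabolic opposite to $\w{\mf p}$ is abelian, so $\Delta(\DG,\la)\cong S(\w{\mf u}^-)\otimes_{\C}L(\w{\mf l},\la)$ as $\w{\mf l}$-modules, hence as $\w{\mf h}$-modules; this gives $\w{\mf h}$-semisimplicity, and a direct count, together with the fact that $L(\w{\mf l},\la)$ has finite-dimensional weight spaces for $\la\in\w\cP^+$, yields finite-dimensional weight spaces for $\Delta(\DG,\la)$. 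The substantive point is condition~(i): one must show that $S(\w{\mf u}^-)\otimes_{\C}L(\w{\mf l},\la)$ is a direct sum of modules $L(\w{\mf l},\mu)$ with $\mu\in\w\cP^+$. Here $\w{\mf u}^-$ is, as an $\w{\mf l}$-module, the outer tensor product of the natural module of one $\gl(\infty|\infty)$-block of $\w{\mf l}$ with the restricted dual of the natural module of the other, so a (super) Cauchy identity expands $S(\w{\mf u}^-)$ as a direct sum of modules $L(\w{\mf l},\nu)$ over a family of ``polynomial-type'' weights $\nu$, and it then remains to decompose each $L(\w{\mf l},\nu)\otimes L(\w{\mf l},\la)$.

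The crux, and the step I expect to be the main obstacle, is showing that $\w\cP^+$ --- the set of weights of the form \eqref{weight:wtIm} --- is stable under tensoring the corresponding $\w{\mf l}$-modules with the constituents of $S(\w{\mf u}^-)$, and likewise that $\cP^+_{(p,n)}$ and $\ov\cP[q,m]^+_{(p,n)}$ are stable under the analogous operations, with the truncation constraints in their definitions respected. Concretely this reduces to the combinatorics of modified Frobenius coordinates: adding, via the Littlewood--Richardson rule, the partitions produced by the Cauchy expansion of $S(\w{\mf u}^-)$ to the data $\theta(\la^{+})$ and $\theta(\la^{-})$ of $\la$ must again yield the modified Frobenius data of a pair of partitions, the $d\La_0$-component being untouched since $K$ acts by the scalar $d$ throughout $\Delta(\DG,\la)$. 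This compatibility is precisely what the definitions \eqref{weight:wtIm}--\eqref{weight:Im} and the bijections \eqref{cP} are arranged to provide, and it is the only place where the three cases require separate (though entirely parallel) bookkeeping; compare \cite[Lemma~4.1]{CLW12}.
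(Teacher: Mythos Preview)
Your sketch is correct and aligns with the paper, which gives no argument beyond the reference to \cite[Lemma~4.1]{CLW12}; you have simply unpacked that lemma's strategy (PBW decomposition $\Delta(\DG,\la)\cong S(\w{\mf u}^-)\otimes L(\w{\mf l},\la)$, Cauchy expansion of $S(\w{\mf u}^-)$, and the Frobenius-coordinate bookkeeping). Note that your reading of the $(\Leftarrow)$ direction---establishing it for $\Delta(\DG,\la)$ and its quotients rather than for an arbitrary highest weight module---is the correct one: the ordinary (Borel-induced) Verma module with highest weight $\la\in\w\cP^+$ is not $\w{\mf l}$-semisimple and hence fails condition~(i) of the definition of $\w\OO$, so the proposition must be read in that spirit.
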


The following lemma can be obtained by the description of the weights of modules in \eqref{weight}.
\begin{lem} \label{weight-decomposition}
Let $M, N\in \w\OO$
(resp., $\OO_{(p,n)}$ and
$\ov\OO[q,m]_{(p,n)}$).
Suppose that $\mu$ and $\gamma$ are weights of $M$ and $N$, respectively. Then
\[
(\mu+\gamma)(E_i)=0\quad\mbox{if and only if }\quad \mu(E_i)=0\,\, \mbox{and}\,\, \gamma(E_i)=0,
 \]
for $i \in \hf\Zs$ (resp., ${\I}_{(p,n)}$ and $\ov{\I}[q,m]_{(p,n)}$).
\end{lem}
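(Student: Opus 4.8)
The plan is to prove the two implications separately. The forward implication ``if $\mu(E_i)=0$ and $\gamma(E_i)=0$ then $(\mu+\gamma)(E_i)=0$'' is immediate from linearity of weights and needs no comment.

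For the reverse implication the key point is a positivity (sign) constraint satisfied by every weight occurring in the categories in question. First I would apply \propref{weight-Xi}(i) to conclude that the weight $\mu$ of $M$ and the weight $\gamma$ of $N$ lie in $\w{\Xi}$ (resp., $\Xi_{(p,n)}$, $\ov{\Xi}[q,m]_{(p,n)}$). Next I would read off from the explicit description \eqnref{weight} that any such weight $\nu$ has the shape $\nu = -\sum_{r} c_r\,\ep_{-r} + \sum_{j} d_j\,\ep_j + e\La_0$ with all coefficients $c_r, d_j \in \Zp$ and $e \in \C$, the indices $r,j$ ranging over the \emph{positive} elements of the relevant index set. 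Evaluating on $E_i$ and using $\ep_k(E_i)=\delta_{k,i}$ and $\La_0(E_i)=0$, this shows $\nu(E_i)\ge 0$ when $i$ is positive and $\nu(E_i)\le 0$ when $i$ is negative; in other words the sign of $\nu(E_i)$ is determined by the sign of $i$ alone. Applying this to both $\mu$ and $\gamma$, the two numbers $\mu(E_i)$ and $\gamma(E_i)$ are either both $\ge 0$ or both $\le 0$, so their sum can vanish only if each of them vanishes. This is exactly the assertion. I would note explicitly that this argument uses that none of $\hf\Zs$, $\I_{(p,n)}$, $\ov{\I}[q,m]_{(p,n)}$ contains $0$, so ``positive'' versus ``negative'' is an exhaustive and unambiguous dichotomy on indices.

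The three cases ($M,N \in \w\OO$; $M,N\in\OO_{(p,n)}$; $M,N\in\ov\OO[q,m]_{(p,n)}$) are handled verbatim in parallel, the only difference being which of the three lines of \eqnref{weight} one quotes. I do not expect a real obstacle here: the statement is essentially bookkeeping built into the parabolic structure of the categories. The one place to be attentive is the sign convention in \eqnref{weight}, where the coordinates at negative indices carry an explicit minus sign; and in the $\ov{\I}[q,m]_{(p,n)}$ case one should observe that the nonstandard total order $<_{\ov{\I}[q,m]}$ plays no role, since the claim only involves the numbers $\nu(E_i)$ and not any ordering of roots or weights.
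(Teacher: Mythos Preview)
Your proposal is correct and is precisely the argument the paper has in mind: the paper simply states that the lemma ``can be obtained by the description of the weights of modules in \eqref{weight}'', and you have spelled out that one-line remark in full, using \propref{weight-Xi}(i) and the sign pattern in \eqref{weight} exactly as intended.
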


Similarly, in view of \eqref{weight}, we immediately obtain the following.

\begin{lem} \label{weight-decomposition2}q
Let $\mu, \gamma\in \w\Xi$. Then:
\begin{enumerate}[\normalfont(i)]
  \item $\mu+\gamma \in  {\Xi}$ if and only if $\mu \in {\Xi}$ and $\gamma \in {\Xi}$.
      \item $\mu+\gamma \in  \ov{\Xi}[q,m]$ if and only if $\mu \in \ov{\Xi}[q,m]$ and $\gamma \in \ov{\Xi}[q,m]$.
\end{enumerate}
\end{lem}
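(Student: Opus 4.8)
The plan is to deduce both statements from a single sign observation about $\w{\Xi}$, in the same spirit as \lemref{weight-decomposition}. First I would note that, from the shape of $\w{\Xi}$ in \eqref{weight}, namely $\w{\Xi}=-\sum_{r\in\hf\N}\Zp\ep_{-r}+\sum_{i\in\hf\N}\Zp\ep_i+\C\La_0$, for every $\mu\in\w{\Xi}$ and every $i\in\hf\Zs$ the scalar $\mu(E_i)$---which is just the coefficient of $\ep_i$ in $\mu$---is a nonnegative integer when $i>0$ and a nonpositive integer when $i<0$, since positive indices occur only in the middle sum and negative ones only in the first. Consequently, for $\mu,\gamma\in\w{\Xi}$ and $i\in\hf\Zs$ with $i\neq 0$, the integers $\mu(E_i)$ and $\gamma(E_i)$ have the same sign, so $(\mu+\gamma)(E_i)=0$ if and only if $\mu(E_i)=\gamma(E_i)=0$.

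Next I would make the comparison of weight spaces precise. Since $\I\subseteq\hf\Zs$ and $\ov{\I}[q,m]\subseteq\hf\Zs$, the spaces $\fh^*$ and $\ovfh[q,m]^*$ sit inside $\wfh^*$ by extending functionals by zero on the remaining $E_i$, and in view of \eqref{weight} one then has $\Xi=\setc*{\nu\in\w{\Xi}}{\nu(E_i)=0\ \text{for all}\ i\in\Zs}$ and $\ov{\Xi}[q,m]=\setc*{\nu\in\w{\Xi}}{\nu(E_i)=0\ \text{for all}\ i\in\hf\Zs\setminus\ov{\I}[q,m]}$; concretely, the discarded indices form $\Zs$ in the first case and, in the second, consist of the integers $>m$ together with the elements of $\hf+\Z$ that are $<-q$. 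The point is that the sign and integrality constraints built into $\w{\Xi}$ are exactly the ones $\Xi$ and $\ov{\Xi}[q,m]$ impose on the indices they retain, so membership in $\Xi$ (resp.\ in $\ov{\Xi}[q,m]$) amounts solely to vanishing on those discarded indices.

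Finally I would combine the two. For (i): if $\mu+\gamma\in\Xi$, then $(\mu+\gamma)(E_i)=0$ for all $i\in\Zs$, whence $\mu(E_i)=\gamma(E_i)=0$ for all $i\in\Zs$ by the sign observation, that is, $\mu,\gamma\in\Xi$; the converse is clear since $\Xi=\w{\Xi}\cap\fh^*$ is closed under addition. Part (ii) is identical, with $\Zs$ replaced throughout by $\hf\Zs\setminus\ov{\I}[q,m]$. I do not expect a genuine obstacle here: the only point requiring attention is to identify correctly, inside $\wfh^*$, which coordinates are the discarded ones for $\Xi$ and for $\ov{\Xi}[q,m]$---once that bookkeeping is in place, the statement is exactly the one-line sign argument above, which is presumably why it is recorded as an immediate consequence of \eqref{weight}.
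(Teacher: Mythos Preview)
Your argument is correct and is exactly the elaboration the paper has in mind: the lemma is stated there as an immediate consequence of \eqref{weight}, and your sign observation together with the identification of $\Xi$ and $\ov{\Xi}[q,m]$ as the subsets of $\w{\Xi}$ on which the appropriate coordinates vanish is precisely what ``immediate'' means here. There is nothing to add or correct.
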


 Let $0 \le r \le p \leq\infty$ and $0 < k \le n \leq\infty$. The truncation functor
${\mf{tr}}^{(p,n)}_{(r,k)} : \OO_{(p,n)} \to \OO_{(r,k)}$ is defined by
$$
{\mf{tr}}^{(p,n)}_{(r,k)} (M)=\bigoplus_{\nu\in {\Xi}_{(r,k)}}M_{\nu} \qquad \mbox{for $M \in \OO_{(p,n)}$}.
$$
The direct sum $\bigoplus_{\nu\in {\Xi}_{(r,k)}}M_{\nu}$ lies in $\OO_{(r,k)}$ by \cite[Lemma 6.1]{CLW12} together with the fact that $[E_i, A]=0$ for all $A\in\G_{(r,k)}$ and all $i \in {\I}_{(p,n)}$ with $i< -r$ or $i>k$. For  every $f \in {\rm Hom}_{\OO_{(p,n)}}(M, N)$, ${\mf{tr}}^{(p,n)}_{(r,k)}(f)$ is defined to be the restriction of $f$ to ${\mf{tr}}^{(p,n)}_{(r,k)}(M)$.
Similarly, we also have the truncation functor
 $\ov{\mf{tr}}[q,m]^{(p,n)}_{(r,k)} : \ov\OO[q,m]_{(p,n)}\to \ov\OO[q,m]_{(r,k)}$ defined by
$$
\ov{\mf{tr}}[q,m]^{(p,n)}_{(r,k)} (M)=\bigoplus_{\nu\in \ov{\Xi}[q,m]_{(r,k)}}M_{\nu} \qquad \mbox{for $M \in \ov\OO[q,m]_{(p,n)}$},
$$
and for every $f \in {\rm Hom}_{\OO[q,m]_{(p,n)}}(M, N)$, $\ov{\mf{tr}}[q,m]^{(p,n)}_{(r,k)}(f)$ is defined to be the restriction of $f$ to $\ov{\mf{tr}}[q,m]^{(p,n)}_{(r,k)}(M)$.
It is clear that ${\mf{tr}}^{(p,n)}_{(r,k)}$ and
$\ov{\mf{tr}}[q,m]^{(p,n)}_{(r,k)}$
are exact functors.
By \lemref{weight-decomposition}, we immediately have the following lemma.

\begin{lem} \label{trun-tensor}
For $0 \le r \le p \leq\infty$ and $0 < k \le n \leq\infty$,
${\mf{tr}}^{(p,n)}_{(r,k)}$ and
$\ov{\mf{tr}}[q,m]^{(p,n)}_{(r,k)}$
are tensor functors.
\end{lem}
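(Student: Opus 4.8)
The plan is a direct verification that the weight-space description of a tensor product is compatible with truncation, so that ${\mf{tr}}^{(p,n)}_{(r,k)}$ intertwines the two tensor structures via the natural transformation given simply by the identity on underlying spaces. The first step is to rewrite the defining condition ``$\nu\in\Xi_{(r,k)}$'': comparing with \eqref{weight}, a weight $\nu\in\Xi_{(p,n)}$ lies in $\Xi_{(r,k)}$ precisely when $\nu(E_i)=0$ for every $i\in\I_{(p,n)}$ with $i<-r$ or $i>k$, and the analogous statement holds for $\ov\Xi[q,m]_{(r,k)}\subseteq\ov\Xi[q,m]_{(p,n)}$.

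For $M,N\in\OO_{(p,n)}$, I would then compute, using $(M\otimes N)_\nu=\bigoplus_{\mu+\gamma=\nu}M_\mu\otimes N_\gamma$ (a locally finite decomposition, by the conditions defining $\OO_{(p,n)}$),
\[
{\mf{tr}}^{(p,n)}_{(r,k)}(M\otimes N)=\bigoplus_{\nu\in\Xi_{(r,k)}}\ \bigoplus_{\mu+\gamma=\nu}M_\mu\otimes N_\gamma,
\]
where $\mu$ and $\gamma$ range over weights of $M$ and $N$, hence over $\Xi_{(p,n)}$ by \propref{weight-Xi}. Now \lemref{weight-decomposition} gives $(\mu+\gamma)(E_i)=0$ if and only if $\mu(E_i)=0$ and $\gamma(E_i)=0$; applying this for all $i\in\I_{(p,n)}$ with $i<-r$ or $i>k$ and invoking the first step, one gets $\mu+\gamma\in\Xi_{(r,k)}$ if and only if $\mu\in\Xi_{(r,k)}$ and $\gamma\in\Xi_{(r,k)}$. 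Hence the double sum collapses to $\bigl(\bigoplus_{\mu\in\Xi_{(r,k)}}M_\mu\bigr)\otimes\bigl(\bigoplus_{\gamma\in\Xi_{(r,k)}}N_\gamma\bigr)={\mf{tr}}^{(p,n)}_{(r,k)}(M)\otimes{\mf{tr}}^{(p,n)}_{(r,k)}(N)$ as subspaces of $M\otimes N$.

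It remains to record the coherence data: the isomorphism just obtained is literally the identity map, so it is automatically natural in $M$ and $N$ and compatible with the associativity constraints, and since ${\mf{tr}}^{(p,n)}_{(r,k)}$ acts on morphisms by restriction it is compatible with $f\otimes g$; moreover the unit object (the trivial module, whose only weight $0$ lies in $\Xi_{(r,k)}$) is preserved. Replacing $\Xi$ by $\ov\Xi[q,m]$ throughout and using \lemref{weight-decomposition} for $\ov\OO[q,m]_{(p,n)}$ yields the statement for $\ov{\mf{tr}}[q,m]^{(p,n)}_{(r,k)}$. I do not expect any genuine obstacle here; the only mildly delicate point is the translation of ``$\nu\in\Xi_{(r,k)}$'' into the vanishing conditions on the $\nu(E_i)$, which is exactly what the explicit form of the weight sets in \eqref{weight} supplies.
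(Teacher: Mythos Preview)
Your proposal is correct and follows the same approach as the paper: the paper's proof is the single sentence ``By \lemref{weight-decomposition}, we immediately have the following lemma,'' and you have simply spelled out the details of that argument, including the translation of membership in $\Xi_{(r,k)}$ into vanishing of $\nu(E_i)$ and the routine coherence checks.
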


The proof of the following result is similar to that of \cite[Lemma 3.2]{CLW11}.
\begin{prop} \label{trun}
 Let $0 \le r \le p \leq\infty$ and $0 < k \le n \leq\infty$. Let $V$ denote either $\Delta$ or $L$.

 \begin{enumerate}[\normalfont(i)]
   \item For ${\mu}\in {\cP}_{(p,n)}^+$,
 $$
 {{\mf{tr}}}^{(p,n)}_{(r,k)} \left(V(\G_{(p,n)},\mu)\right)=
 \begin{cases}
       V(\G_{(r,k)},\mu) & \mbox{if}\ \ {\mu}\in  {\cP}_{(r,k)}^+, \\
       0 & \mbox{otherwise}.
     \end{cases}
 $$
   \item For ${\mu}\in \ov{\cP}[q,m]_{(p,n)}^+$,
 $$
 \ov{\mf{tr}}[q,m]^{(p,n)}_{(r,k)} \left(V(\SG[q,m]_{(p,n)}, \mu)\right)=
 \begin{cases}
       V(\SG[q,m]_{(r,k)}, \mu)& \mbox{if}\ \ {\mu}\in  \ov{\cP}[q,m]_{(r,k)}^+, \\
       0 & \mbox{otherwise}.
     \end{cases}
 $$
 \end{enumerate}
\end{prop}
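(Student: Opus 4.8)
The plan is to reduce everything to the infinite-rank statement and the definition of the truncation functor, using the explicit description of the weights of the relevant modules. First I would treat the case $V=\Delta$. Recall that $\Delta(\G_{(p,n)},\mu)=\mathrm{Ind}^{\G_{(p,n)}}_{{\mf p}_{(p,n)}} L({\mf l}_{(p,n)},\mu)$. As a vector space this parabolic Verma module is, by the PBW theorem, isomorphic to $U({\mf n}^-_{(p,n)}) \otimes L({\mf l}_{(p,n)},\mu)$, where ${\mf n}^-_{(p,n)}$ is spanned by the root vectors $E_{j,i}$ with $\ep_i-\ep_j$ a positive root not in $Y_{(p,n)}$; thus every weight of $\Delta(\G_{(p,n)},\mu)$ has the form $\mu-\sum c_\beta \beta$ with $c_\beta\in\Zp$ and $\beta$ ranging over those roots. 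The key computation is to check that such a weight $\nu$ satisfies $\nu\in\Xi_{(r,k)}$ — i.e.\ $\nu(E_i)=0$ for all $i\in\I_{(p,n)}$ with $i<-r$ or $i>k$ — precisely when $\mu$ itself lies in $\cP^+_{(r,k)}$ and the subtracted roots only involve indices inside $\I_{(r,k)}$; this is exactly the content of the argument in \cite[Lemma~3.2]{CLW11}, using \lemref{weight-decomposition} together with the fact recalled after the definition of the truncation functor that $[E_i,A]=0$ for $A\in\G_{(r,k)}$ and $i\in\I_{(p,n)}$ with $i<-r$ or $i>k$. Consequently, if $\mu\notin\cP^+_{(r,k)}$ then no weight of $\Delta(\G_{(p,n)},\mu)$ lies in $\Xi_{(r,k)}$, so ${\mf{tr}}^{(p,n)}_{(r,k)}(\Delta(\G_{(p,n)},\mu))=0$; and if $\mu\in\cP^+_{(r,k)}$, then the subspace $\bigoplus_{\nu\in\Xi_{(r,k)}}\Delta(\G_{(p,n)},\mu)_\nu$ is precisely the $U({\mf n}^-_{(r,k)})$-submodule generated by the highest weight space, which — again because the higher $E_i$'s act trivially on $\G_{(r,k)}$ — is a highest weight $\G_{(r,k)}$-module with highest weight $\mu$ satisfying the defining universal property of $\Delta(\G_{(r,k)},\mu)$; since $\mu\in\cP^+_{(r,k)}$ it lies in $\OO_{(r,k)}$ by \propref{tensor-cat}, and one checks it has the right character, so it equals $\Delta(\G_{(r,k)},\mu)$.

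Next I would deduce the case $V=L$. Since ${\mf{tr}}^{(p,n)}_{(r,k)}$ is an exact functor (noted in the excerpt), it sends the irreducible quotient $L(\G_{(p,n)},\mu)$ of $\Delta(\G_{(p,n)},\mu)$ to a quotient of ${\mf{tr}}^{(p,n)}_{(r,k)}(\Delta(\G_{(p,n)},\mu))$. If $\mu\notin\cP^+_{(r,k)}$ this target is $0$, so the truncation of $L(\G_{(p,n)},\mu)$ vanishes. If $\mu\in\cP^+_{(r,k)}$, then ${\mf{tr}}^{(p,n)}_{(r,k)}(L(\G_{(p,n)},\mu))$ is a nonzero quotient of $\Delta(\G_{(r,k)},\mu)$ — nonzero because it contains the image of the $\mu$-weight space — and, being a subquotient of a module in $\OO_{(r,k)}$, it lies in $\OO_{(r,k)}$. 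It remains to see it is the \emph{irreducible} quotient; this follows because the truncation functor is full and faithful on the relevant weight-bounded subcategory (equivalently, any proper submodule of $L(\G_{(p,n)},\mu)$ has no weights in $\Xi_{(r,k)}$ meeting the top, by the same weight bookkeeping), forcing any nonzero proper submodule of the truncated module to pull back to a nonzero proper submodule of $L(\G_{(p,n)},\mu)$, contradicting irreducibility; hence ${\mf{tr}}^{(p,n)}_{(r,k)}(L(\G_{(p,n)},\mu))=L(\G_{(r,k)},\mu)$.

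For part~(ii) the argument is identical verbatim, replacing $\G$, $\I$, $\cP^+$, $\Xi$, ${\mf{tr}}$ by their barred counterparts $\SG[q,m]$, $\ov{\I}[q,m]$, $\ov{\cP}[q,m]^+$, $\ov{\Xi}[q,m]$, $\ov{\mf{tr}}[q,m]$, and invoking \lemref{weight-decomposition} in the barred case; no new phenomena arise because the odd isotropic simple roots do not affect the vanishing criterion $\nu(E_i)=0$. The main obstacle — and the only step requiring genuine care rather than bookkeeping — is the identification in the case $\mu\in\cP^+_{(r,k)}$ that the truncated parabolic Verma module is again a \emph{parabolic} Verma module with the correct Levi-module top $L({\mf l}_{(r,k)},\mu)$ rather than merely some highest weight module: one must verify that truncation commutes with the parabolic induction, i.e.\ that restricting the weight support is compatible with the PBW filtration and that the Levi factor $L({\mf l}_{(p,n)},\mu)$ truncates to $L({\mf l}_{(r,k)},\mu)$. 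This reduces to the same statement one dimension down and is handled exactly as in \cite[Lemma~3.2]{CLW11}; I expect no new difficulty beyond carefully matching indices across the total order on $\ov{\I}[q,m]$.
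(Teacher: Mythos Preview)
Your proposal is correct and follows essentially the same approach as the paper, which does not give a proof but simply notes that the argument is similar to that of \cite[Lemma~3.2]{CLW11}; you have correctly identified this reference and spelled out the standard PBW/weight-bookkeeping argument for $\Delta$ together with the exactness-plus-singular-vector argument for $L$. The only phrasing I would tighten is your irreducibility step: rather than speaking of ``pulling back'' a submodule, it is cleaner to observe that any singular vector in the truncated module is already singular in the ambient module (by the same commutation $[E_i,A]=0$ for $i$ outside $\I_{(r,k)}$), so a proper nonzero submodule of ${\mf{tr}}^{(p,n)}_{(r,k)}(L(\G_{(p,n)},\mu))$ would produce a singular vector of weight $\neq\mu$ in $L(\G_{(p,n)},\mu)$, contradicting irreducibility.
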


Given ${M}=\bigoplus_{\gamma \in \wfh^*}{M}_\gamma \in \w\OO$,  we define the functors
$$
{T}({M})
=\bigoplus_{\gamma\in{\fh^*}}{M}_\gamma
\ \ \ \ \ \ \mbox{and}\ \ \ \  \
 \ov{T}_{[q,m]}({M})
 =\bigoplus_{\gamma\in{\ovfh[q,m]^*}}{M}_\gamma.
$$
For $M, N \in \w\OO$
and $f\in {\rm Hom}_{\w\OO}(M,N)$, ${T}({f})$
(resp., $\ov{T}_{[q,m]}({f})$)
is defined to be the restriction of $f$ to
${T}({M})$
(resp., $\ov{T}_{[q,m]}({M})$).
Note that
$f({T}({M})) \subseteq {T}({N})$
(resp., $f(\ov{T}_{[q,m]}({M})) \subseteq \ov{T}_{[q,m]}({N})$),
and
${T}({f}) :  {T}({M}) \to  {T}({N})$
(resp., $\ov{T}_{[q,m]}({f}) :  \ov{T}_{[q,m]}({M}) \to  \ov{T}_{[q,m]}({N})$)
is a $\G$(resp., $\SG[q,m]$)-homomorphism.

\begin{prop} \label{exact-functor}
For $q, m \in \Zp$, the functors $T$ and $\ov{T}_{[q,m]}$ are exact.
\end{prop}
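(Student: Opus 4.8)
The plan is to show that $T$ and $\ov{T}_{[q,m]}$ send short exact sequences to short exact sequences by reducing the claim to a statement about weight spaces. First I would recall that both functors are defined purely by restricting a module to the sum of its weight spaces over the appropriate sublattice of Cartan weights: $T(M)=\bigoplus_{\gamma\in\fh^*}M_\gamma$ and $\ov{T}_{[q,m]}(M)=\bigoplus_{\gamma\in\ovfh[q,m]^*}M_\gamma$, where the inclusions $\fh^*\hookrightarrow\wfh^*$ and $\ovfh[q,m]^*\hookrightarrow\wfh^*$ are the ones induced by the bijections \eqref{cP} on the level of $\cP^+$. In other words, for each functor there is a fixed subset $S\subseteq\wfh^*$ (namely the image of $\fh^*$, resp. $\ovfh[q,m]^*$, inside $\wfh^*$ under the relevant identification of dual Cartan data) such that the functor is $M\mapsto\bigoplus_{\gamma\in S}M_\gamma$ on objects and ``restrict to that summand'' on morphisms.

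The key point is then that taking a fixed collection of weight spaces is an exact operation. Concretely, given a short exact sequence $0\to M'\xrightarrow{f} M\xrightarrow{g} M''\to 0$ in $\w\OO$, each morphism is an even $\wfh$-module homomorphism, hence preserves weight spaces: $f(M'_\gamma)\subseteq M_\gamma$ and $g(M_\gamma)\subseteq M''_\gamma$ for every $\gamma\in\wfh^*$. Since $M$ is a semisimple $\wfh$-module with finite-dimensional weight spaces (part of the definition of $\w\OO$), we have $M=\bigoplus_{\gamma}M_\gamma$ as vector spaces, and exactness of the sequence is equivalent to exactness of $0\to M'_\gamma\to M_\gamma\to M''_\gamma\to 0$ for every single $\gamma$. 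Restricting to $\gamma\in S$ and taking the direct sum over $S$, we get that $0\to T(M')\to T(M)\to T(M'')\to 0$ (resp. with $\ov{T}_{[q,m]}$) is exact. One should also note that $T(M)$ and $\ov{T}_{[q,m]}(M)$ genuinely lie in $\OO$ and $\ov\OO[q,m]$ respectively, so the sequences live in the target categories; this is exactly the content of super duality as recalled from \cite{CLW12} and may be cited.

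There is really no serious obstacle here: the only thing to be careful about is the bookkeeping identifying the source of the direct-sum decomposition — one must make sure that the ``$\gamma\in\fh^*$'' (resp. ``$\gamma\in\ovfh[q,m]^*$'') in the definition of the functors is interpreted so that restriction of a homomorphism to that summand is well-defined, i.e. that morphisms in $\w\OO$ are $\wfh$-linear and hence respect the decomposition. This is immediate from the fact that objects of $\w\OO$ are semisimple over $\wfh$ and morphisms are even module maps. I would therefore phrase the proof as: (1) recall the weight-space description of $T$, $\ov{T}_{[q,m]}$; (2) observe every morphism in $\w\OO$ preserves all weight spaces; (3) note exactness can be checked weight space by weight space because of the $\wfh$-semisimplicity; (4) conclude by summing over the relevant subset of weights, invoking \propref{tensor-cat} / the cited results of \cite{CLW12} to see the outputs lie in the target categories. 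The mild care needed in step (1)–(2) is the closest thing to a ``hard part,'' and it is routine.
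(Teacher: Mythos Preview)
Your proposal is correct and is the natural argument: once one knows that $T(M)$ (resp.\ $\ov{T}_{[q,m]}(M)$) is a $\G$- (resp.\ $\SG[q,m]$-) submodule lying in the target category and that morphisms in $\w\OO$ preserve weight spaces, exactness reduces immediately to the weight-space-by-weight-space statement you describe. The paper itself does not give an independent argument here; it simply refers to \cite[Proposition~4.4]{CLW12}, whose proof is of the same flavor.

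Two small remarks. First, your invocation of \propref{tensor-cat} to ensure $T(M)\in\OO$ is not quite on target: that proposition characterizes which \emph{highest weight} modules lie in the categories, not arbitrary ones. The relevant fact---that $T(M)$ is genuinely a $\G$-module in $\OO$ and $T(f)$ a $\G$-homomorphism---is asserted in the paper just before this proposition (implicitly relying on \cite{CLW12}), so you may simply cite that discussion. Second, calling the well-definedness of the target ``the content of super duality'' overstates things: super duality is the much stronger equivalence statement of \thmref{thm:SD}, whereas what you need here is only the more elementary fact that the weight-space truncation lands in $\OO$ (resp.\ $\ov\OO[q,m]$). These are cosmetic points; the mathematical argument is sound.
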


The above proposition can be proved in the same way as in \cite[Proposition 4.4]{CLW12}.
Thanks to \lemref{weight-decomposition2},
${T}(M\otimes N)= {T}(M)\otimes {T}(N)$
(resp., $\ov{T}_{[q,m]}(M\otimes N)= \ov{T}_{[q,m]}(M)\otimes \ov{T}_{[q,m]}(N)$)
 for all $M, N\in \w\OO$, and we obtain the following proposition.

\begin{prop} \label{tensor-functor}
For $q, m \in \Zp$, the functors $T$ and $\ov{T}_{[q,m]}$ are tensor functors.
 \end{prop}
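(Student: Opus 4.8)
The plan is to deduce \propref{tensor-functor} directly from the already-established facts: \propref{exact-functor} (which gives exactness of $T$ and $\ov T_{[q,m]}$), \propref{weight-Xi} (which identifies $\w\OO$, $\OO_{(\infty,\infty)}$ and $\ov\OO[q,m]_{(\infty,\infty)}$ as tensor categories with the $\Z_2$-gradation \eqref{wt-Z2-gradation}), and \lemref{weight-decomposition2}, which has already been invoked in the paragraph preceding the statement to yield the equality of vector spaces $T(M\otimes N)=T(M)\otimes T(N)$ (resp.\ $\ov T_{[q,m]}(M\otimes N)=\ov T_{[q,m]}(M)\otimes \ov T_{[q,m]}(N)$) for $M,N\in\w\OO$. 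So the skeleton of the argument is: first recall that a tensor functor between tensor categories is an exact functor $F$ equipped with natural isomorphisms $F(M\otimes N)\cong F(M)\otimes F(N)$ and $F(\mathbf 1)\cong\mathbf 1$ compatible with the associativity and unit constraints; then verify each clause for $T$ and $\ov T_{[q,m]}$.

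First I would treat $T$. Exactness is \propref{exact-functor}. For the monoidal structure: given $M,N\in\w\OO$, \lemref{weight-decomposition2}(i) says that a weight $\mu+\gamma$ of $M\otimes N$ lies in $\Xi$ precisely when both $\mu\in\Xi$ and $\gamma\in\Xi$; since by \propref{weight-Xi}(i) every weight of $M$ (resp.\ $N$, resp.\ $M\otimes N$) lies in $\w\Xi$, and $T$ is defined by keeping exactly the weight spaces indexed by $\fh^*$, i.e.\ by $\Xi$, this shows on the level of underlying vector spaces that
\[
T(M\otimes N)=\bigoplus_{\mu,\gamma:\ \mu+\gamma\in\Xi}M_\mu\otimes N_\gamma=\Big(\bigoplus_{\mu\in\Xi}M_\mu\Big)\otimes\Big(\bigoplus_{\gamma\in\Xi}N_\gamma\Big)=T(M)\otimes T(N),
\]
and this identification is manifestly natural in $M$ and $N$ because on each summand it is the identity. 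One also checks the $\G$-action agrees: $T(M)$ carries the $\G$-module structure coming from the $\G\subseteq\DG$ action on the selected weight spaces (this is exactly how $T$ was defined before the statement), and under the above identification the $\G$-action on $T(M\otimes N)$ obtained by restriction of the $\DG$-coproduct coincides with the tensor-product $\G$-action on $T(M)\otimes T(N)$, since the coproduct is the same map. The unit object of $\w\OO$ is the trivial module $\C$ (with highest weight $0\in\w\cP^+$), whose only weight $0$ lies in $\Xi$, so $T(\C)=\C$, the unit of $\OO_{(\infty,\infty)}$. Compatibility of these isomorphisms with the associativity and unit constraints is immediate since all the structure isomorphisms on $\w\OO$, $\OO_{(\infty,\infty)}$ are the standard ones on vector spaces (tensor of modules), and $T$ acts as the identity on the retained weight spaces; there is nothing to check beyond remarking this. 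Finally, the compatibility of the $\Z_2$-gradations noted in the paragraph before \propref{weight-Xi} — that the gradation on $M\otimes N$ given by \eqref{wt-Z2-gradation} agrees with the one induced from $M$ and $N$ — ensures $T$ respects the super structure, so $T$ is a tensor functor. Verbatim the same reasoning, using \lemref{weight-decomposition2}(ii) and $\ov\Xi[q,m]$ in place of \lemref{weight-decomposition2}(i) and $\Xi$, handles $\ov T_{[q,m]}$.

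I do not expect any serious obstacle here: the content has essentially been assembled before the statement, and the proposition is a packaging result. The one point that needs a sentence of care is the compatibility of the module structure (and the $\Z_2$-gradation) with the identification of underlying spaces — i.e.\ that $T(M\otimes N)=T(M)\otimes T(N)$ is an isomorphism of $\G$-modules and not merely of vector spaces, and similarly for $\ov T_{[q,m]}$ over $\SG[q,m]$. This follows because the Lie (super)algebra action on a tensor product is defined through the (super)coproduct, which commutes with the projection onto the weight spaces retained by $T$; there is no subtlety beyond invoking \lemref{weight-decomposition} to see that the retained weight spaces of $M\otimes N$ are exactly the tensor products of retained weight spaces of $M$ and $N$. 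Hence the proof is: cite \propref{exact-functor} for exactness, cite \lemref{weight-decomposition2} (as in the preceding paragraph) for the monoidal isomorphism, note $T(\C)=\C$ and $\ov T_{[q,m]}(\C)=\C$ for the unit, observe the constraints are the standard vector-space ones and the $\Z_2$-gradations match, and conclude. For brevity, since the excerpt already says ``we obtain the following proposition'', the write-up can legitimately be a short remark referring back to \lemref{weight-decomposition2} and \propref{exact-functor}, much as \cite[Proposition 4.4]{CLW12} is cited for \propref{exact-functor}.
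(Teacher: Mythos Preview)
Your proposal is correct and follows essentially the same approach as the paper: the paper's entire argument is the sentence immediately preceding the proposition, which invokes \lemref{weight-decomposition2} to obtain $T(M\otimes N)=T(M)\otimes T(N)$ (and the analogous identity for $\ov T_{[q,m]}$). Your write-up simply spells out the routine verifications (unit, constraints, $\G$-action, $\Z_2$-gradation) that the paper leaves implicit.
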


Super duality asserts that the categories
$\w{\mc{O}}$, $\OO$ and $\OO[q,m]$
 are equivalent. It can be described more precisely as in the following theorem.

\begin{thm} \label{thm:SD}
For $q, m \in \Zp$, the functor
${T}: \w\OO\rightarrow\OO$
(resp., $\ov T_{[q,m]}: \w\OO\rightarrow\ov\OO[q,m]$) is an equivalence of tensor categories.
Moreover,
$T$ (resp., $\ov{T}_{[q,m]}$)
 sends parabolic Verma modules to parabolic Verma modules and irreducible modules to irreducible modules. More precisely, for $\wla \in \w{\cP}^+$, we have
 $$
{T}\big{(} {\Delta}(\DG,\wla) {)}={\Delta}(\G,{\la}), \quad {T} \big{(} L(\DG,\wla) \big{)}=L(\G, {\la})
$$
$$
\mbox{(resp., $\ov{T}_{[q,m]}\big{(} {\Delta}(\DG,\wla) {)}={\Delta}(\SG[q,m],\ov{\la}[q,m]), \quad \ov T_{[q,m]}\big{(} L(\DG,\wla) \big{)}=L(\SG[q,m], \ov{\la}[q,m])$)}.
$$
\end{thm}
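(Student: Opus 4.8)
\emph{Proof proposal.} The plan is to prove the theorem along the lines of \cite{CLW12}: first show that $T$ and $\ov T_{[q,m]}$ behave correctly on the building blocks of the categories (parabolic Verma modules and their irreducible quotients), then upgrade this to an equivalence by a combinatorial/homological dévissage, and finally invoke \propref{tensor-functor} for the tensor structure.

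\emph{Step 1 (highest weight modules).} By \propref{exact-functor}, $T$ and $\ov T_{[q,m]}$ are exact, and by \propref{tensor-cat} a highest weight module lies in the relevant category exactly when its highest weight lies in $\w\cP^+$, $\cP^+$ or $\ov\cP[q,m]^+$. Using the definition $T(M)=\bigoplus_{\gamma\in\fh^*}M_\gamma$ together with the weight formulas \eqnref{weight:wtIm}--\eqnref{weight:Im} and the bijections \eqnref{cP}, one checks for $\wla\in\w\cP^+$ that the $\mf l$-module underlying $T(\Delta(\DG,\wla))$ is $L(\mf l,\la)$ and that $T(\Delta(\DG,\wla))$ is free over the nilradical of the opposite parabolic; the point is that the roots of $\DG$ outside the Levi $\w{\mf l}$ correspond, via \eqnref{Levi} and \eqnref{cP}, bijectively to the roots of $\G$ (resp. $\SG[q,m]$) outside $\mf l$ (resp. $\ov{\mf l}[q,m]$), so the $\mf u^-$-weight content matches. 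This gives $T(\Delta(\DG,\wla))=\Delta(\G,\la)$, and the same argument with $\ov Y[q,m]$ gives $\ov T_{[q,m]}(\Delta(\DG,\wla))=\Delta(\SG[q,m],\ovla[q,m])$. Exactness then makes $T(L(\DG,\wla))$ a nonzero highest weight quotient of $\Delta(\G,\la)$ of highest weight $\la$; identifying it with $L(\G,\la)$ requires Step 2.

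\emph{Step 2 (composition multiplicities -- the crux).} The heart of the matter is the identity
\[
[\Delta(\DG,\wla):L(\DG,\wmu)]=[\Delta(\G,\la):L(\G,\mu)]=[\Delta(\SG[q,m],\ovla[q,m]):L(\SG[q,m],\ovmu[q,m])]
\]
for all $\wla,\wmu\in\w\cP^+$. I would obtain it by truncating to finite rank via ${\mf{tr}}^{(p,n)}_{(r,k)}$ and $\ov{\mf{tr}}[q,m]^{(p,n)}_{(r,k)}$ (exact tensor functors sending parabolic Verma modules to parabolic Verma modules or to $0$, by \propref{trun} and \lemref{trun-tensor}), thereby reducing each side to a block of a parabolic category $\mathcal O$ for a finite-dimensional $\gl$ or $\gl(\cdot|\cdot)$ whose decomposition numbers are all governed by the same Kazhdan--Lusztig/canonical-basis datum, and then letting $r,k\to\infty$. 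This is precisely where the real work lies, and I expect it to be the main obstacle; rather than re-derive it, I would import it from the super-duality machinery of \cite{CLW12,CLW11}. Granting the identity, exactness of $T$ forces $T(L(\DG,\wla))$ and $L(\G,\la)$ to have the same composition factors, hence $T(L(\DG,\wla))=L(\G,\la)$, and likewise $\ov T_{[q,m]}(L(\DG,\wla))=L(\SG[q,m],\ovla[q,m])$.

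\emph{Step 3 (equivalence).} With $T$ exact, carrying $\Delta$'s to $\Delta$'s and $L$'s to $L$'s and preserving composition multiplicities, full faithfulness follows by dévissage: one first shows $\mathrm{Hom}_{\w\OO}(\Delta(\DG,\wla),N)\xrightarrow{\ \sim\ }\mathrm{Hom}_{\OO}(\Delta(\G,\la),T(N))$ for all $N$, identifying both sides with spaces of singular vectors via the root correspondence of Step 1 and inducting on a local composition series of $N$; then $\mathrm{Hom}_{\w\OO}(M,N)\xrightarrow{\ \sim\ }\mathrm{Hom}_{\OO}(T(M),T(N))$ for general $M$ by writing $M$ as the cokernel of a morphism between direct sums of parabolic Verma modules and applying the five lemma. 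For essential surjectivity, after truncation each block is a module category over a finite-dimensional quasi-hereditary algebra with enough projectives, and BGG reciprocity combined with the identity of Step 2 shows $T$ and $\ov T_{[q,m]}$ send indecomposable projectives to indecomposable projectives and induce a bijection on their isomorphism classes, so full faithfulness plus exactness yields that every object lies in the essential image. The identical argument applies to $\ov T_{[q,m]}$. Finally, \propref{tensor-functor} gives that $T$ and $\ov T_{[q,m]}$ are tensor functors, so they are equivalences of tensor categories, and the explicit formulas on parabolic Verma modules and irreducibles are exactly the output of Steps 1 and 2.
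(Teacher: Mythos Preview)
Your proposal is correct and follows essentially the same route as the paper, which does not give an independent proof but simply refers to the super-duality arguments of \cite{CL10,CLW11,CLW12}; your outline is a faithful sketch of what those references do, and you rightly flag Step~2 (the matching of composition multiplicities) as the place where the real work is imported from \cite{CLW11,CLW12}. One small caveat: in Step~1 the roots of $\DG$ outside $\w{\mf l}$ are not literally in bijection with those of $\G$ or $\SG[q,m]$ outside their Levi subalgebras---what one actually checks is that the \emph{characters} of the parabolic Verma modules agree after restricting to the relevant weight lattice, which is the combinatorial identity underlying \cite[Theorem~4.13]{CLW12}.
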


The theorem can be proved by an argument similar to the proof of the super duality in \cite{CL10,CLW11} (see \cite[Theorem 4.13]{CLW12} as well).

\begin{rem} \label{rem:SD}
For $p=q=0$, we may define the functors $T_\ze: \w\OO_{(0, \infty)} \rightarrow \OO_{(0, \infty)}$ and $\ov T_{[m]}: \w\OO_{(0, \infty)} \rightarrow \ov\OO[0,m]_{(0, \infty)}$ in the same way as we define the functors $T$ and $\ov T_{[q,m]}$. \propref{exact-functor}, \propref{tensor-functor} and \thmref{thm:SD} are also valid for $T_\ze$ and $\ov T_{[m]}$ (cf. \cite[Theorem 2.11]{ChL}).
\end{rem}

\subsection{Unitarizable modules over $\ovcG[q,m]_{(p,n)}$}\label{sec:UM}

 We begin by reviewing the basic notions of $*$-superalgebras and unitarizable modules. A \emph{$*$-superalgebra} is an associative superalgebra $A$ together with an anti-linear anti-involution $\omega: A\to A$ of degree $\ov 0$. A homomorphism $f : (A,\omega) \to (A^\prime, \omega^\prime)$ of $*$-superalgebras is a homomorphism of superalgebras satisfying $\omega^\prime \circ f = f \circ \omega$. Let $(A,\omega)$ be a $*$-superalgebra and $V$ a $\Z_2$-graded $A$-module.  A Hermitian form $\langle\cdot|\cdot\rangle$ on $V$ is said to be \emph{contravariant} if $\langle av | v'\rangle=\langle v|\omega(a)v'\rangle$ for all $a\in A$ and $v,v'\in V$. An $A$-module equipped with a positive definite contravariant Hermitian form is called a \emph{unitarizable} $A$-module. It is evident that any unitarizable $A$-module is completely reducible.

 A Lie superalgebra $\mf L$ is said to admit a \emph{$*$-structure} if $\mf L$ is equipped with an anti-linear anti-involution $\omega$ of degree $\ov 0$. In this case, $\omega$ is also called a $*$-structure on $\mf L$. A homomorphism $f : (\mf L, \omega) \to (\mf L^\prime, \omega^\prime)$ of Lie superalgebras with $*$-structures is a homomorphism of Lie superalgebras satisfying $\omega^\prime \circ f =f \circ \omega$. Moreover, it is clear that $\omega$ is a $*$-structure on $\mf L$ if and only if the natural extension of $\omega$ to the universal enveloping algebra $U(\mf L)$ of $\mf L$ is an anti-linear anti-involution. Let $(\mf L, \omega)$ be a Lie superalgebra with $*$-structure and $V$ a $\Z_2$-graded $\mf L$-module. A Hermitian form $\langle\cdot|\cdot\rangle$ on $V$ is said to be \emph{contravariant} if $\langle xv|v'\rangle=\langle v|\omega(x)v'\rangle$ for all $x\in \mf L$ and $v,v'\in V$. An $\mf L$-module equipped with a positive definite contravariant Hermitian form is called a \emph{unitarizable} $\mf L$-module. Note that an $\mf L$-module $V$ is a unitarizable $\mf L$-module if and only if $V$ is a unitarizable $U(\mf L)$-module. We refer the reader to \cite{ChL, CLZ, LZ06, LZ11} for further details.

The Lie superalgebra $\DG$ admits a $*$-structure $\omega$ defined by
$$
\sum_{i, j \in \hf \Zs} a_{ij} E_{i, j} \mapsto \sum_{i, j \in \hf \Zs} (-1)^{\tau_i+\tau_j} \ov{a}_{ij} E_{j, i} \qquad\mbox{and}\qquad K \mapsto K.
$$
Here $\ov{a}_{ij}$ denotes the complex conjugate of $a_{ij} \in \C$ and
$$
\tau_i:=
 \begin{cases}
       1 & \mbox{if}  \ \  -i \in \N;\\
       0 & \mbox{if}  \ \    -i  \in \hf \Zs \backslash \N.
     \end{cases}
 $$
The above $*$-structure is, in fact, a restriction of the $*$-structure $\omega'$ defined in \cite[Section 3.1]{ChL}.
Evidently, the restriction of $\omega$ to the subalgebra $\SG[q,m]_{(p,n)}$ gives a $*$-structure on $\SG[q,m]_{(p,n)}$, which is also denoted by $\omega$.
Moreover, the restriction of $\omega$ on $\SG[q,m]_{(p,n)}$ to $\ovcG[q,m]_{(p,n)}$, via the isomorphism $\iota$ in \eqref{iso-e}, gives a $*$-structure on $\ovcG[q,m]_{(p,n)}$, which we denote again by $\omega$ (see also \cite[p. 791]{CLZ}).

Let $d \in \N$. A \emph{generalized partition} $\la:=(\la_1,\ldots,\la_d)$ of depth $d$ is defined to be a sequence of integers in decreasing order: $\la_1 \ge \ldots \ge \la_d$. We denote by $\cP_d$ the set of all generalized partitions of depth $d$.

Suppose $\la \in \cP_d$. Define
$$
\la^+:=\left(\langle \la_1 \rangle,\ldots,\langle \la_d \rangle \right) \quad \mbox{and} \quad
\la^-:=\left(\langle -\la_d \rangle,\ldots,\langle -\la_1 \rangle \right).
$$
Then $\la^+$ and $\la^-$ are partitions. Let
$\cP_d(p+m|q+n)$
be the set of all generalized partitions $\la$ of depth $d$ such that $\la_{m+1}\le n$ and $\la_{d-p}\ge -q$.
The condition $\la_{d-p}\ge -q$ is considered to be automatically satisfied whenever $d \le p$.

For any $\la \in \cP_d(p+m|q+n)$, we define $\ovla \in \ov{\bf h}[q,m]_{(p,n)}^*$ by
$$
\ovla:=-\sum_{r=1}^{p} \langle \la^{-}_{r}-q \rangle \ep_{-r}- \sum_{s=1}^{q} (\la^{-})^\prime_{s} \ep_{-s+\hf}  +\sum_{i=1}^m \la^+_i \, \ep_i+\sum_{j=1}^n\langle(\la^+)'_j-m\rangle \,\ep_{j-\hf}.
$$
Note that $(\la^+)'_{n+1}\leq m$, $\la^{-}_{p+1}\leq q$ and $\ell(\la^+)+\ell(\la^-)\le d$.
We denote
$$
\mathbbm{1}_{p|q}=\sum_{r=1}^{p} \ep_{-r}-\sum_{s=1}^{q} \ep_{-s+\hf}.
$$
Let
\begin{equation}\label{un-weight}
\ov\cQ[q,m]_{(p,n)}=\setc*{\ovla - d \mathbbm{1}_{p|q}}{  \la \in \cP_d(p+m|q+n), d \in \N}.
\end{equation}
We denote by
$$L(\ovcG[q,m]_{(p,n)}, \xi) \quad \mbox{(resp., $L(\cG_{(p,n)}, \xi)$)}$$
the irreducible highest weight $\ovcG[q,m]_{(p,n)}$(resp., $\cG_{(p,n)}$)-module with highest weight $\xi \in \ov{\bf h}[q,m]_{(p,n)}^*$ (resp., ${\bf h}_{(p,n)}^*$).
We have the following proposition (cf. \cite{CLZ}).

\begin{prop} \label{CLZ}
Let $m,p,q \in \Zp$, $n \in \N$, and $\xi \in \ov{\bf h}[q,m]_{(p,n)}^*$ an integral weight. Then $L(\ovcG[q,m]_{(p,n)}, \xi)$ is unitarizable with respect to $\omega$ if and only if $\xi \in \ov\cQ[q,m]_{(p,n)}$.
\end{prop}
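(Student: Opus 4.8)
The plan is to reduce the unitarizability criterion for $L(\ovcG[q,m]_{(p,n)}, \xi)$ to the known classification of unitarizable highest weight modules over the infinite-rank Lie superalgebra $\SG[q,m]$ (equivalently, via super duality, over $\DG$), and then descend to finite rank using the truncation functor. First I would observe that an integral weight $\xi \in \ov{\bf h}[q,m]_{(p,n)}^*$ can be written uniquely in the form $\ovla - d\,\mathbbm{1}_{p|q}$ for some $d \in \N$ and some generalized partition $\la$ of depth $d$ — this is a bookkeeping step, essentially solving for the Frobenius-type data encoded in the coordinates of $\xi$ and absorbing the "shift by $d\,\mathbbm{1}_{p|q}$" that converts a dominant-integral weight of $\gl(p+m|q+n)$ into the normalized form appearing in \eqref{un-weight}. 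The condition $\xi \in \ov\cQ[q,m]_{(p,n)}$ then amounts precisely to the constraints $\la_{m+1} \le n$ and $\la_{d-p} \ge -q$ defining $\cP_d(p+m|q+n)$, so the content of the proposition is: $L(\ovcG[q,m]_{(p,n)}, \xi)$ is unitarizable $\iff$ the associated generalized partition lies in $\cP_d(p+m|q+n)$.

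Next I would invoke the $*$-structure compatibility: $\omega$ on $\ovcG[q,m]_{(p,n)}$ is the restriction (via $\iota$) of $\omega$ on $\SG[q,m]_{(p,n)}$, which in turn is the restriction of $\omega$ on $\DG$, which is a restriction of the $*$-structure $\omega'$ from \cite[Section 3.1]{ChL}. The key input, \cite{CLZ} (and the infinite-rank analogue in \cite{ChL, CLW, LZ06, LZ11}), is the classification of unitarizable highest weight modules for $\gl(p+\infty|q+\infty)$-type algebras in terms of generalized partitions. Concretely, I would first settle the infinite-rank case $(p,n) = (\infty,\infty)$: $L(\SG[q,m], \ovla - d\,\mathbbm{1}_{q|q})$ — here with the full data, where the partition constraints $\la_{m+1} \le n = \infty$ and $\la_{d-p} \ge -q$ with $p = \infty$ are vacuous — is unitarizable for every generalized partition $\la$; this is the statement that over the infinite-rank algebra all the relevant highest weight modules are unitarizable, which is where the actual positivity of the contravariant form gets verified (via an explicit Fock-space or oscillator realization, or the Enright-type reduction in \cite{CLZ}). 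I expect this to be essentially a citation rather than new work, since the excerpt explicitly says "cf. \cite{CLZ}."

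Then I would descend to finite rank via \propref{trun}: the truncation functor $\ov{\mf{tr}}[q,m]^{(\infty,\infty)}_{(p,n)}$ sends $L(\SG[q,m], \mu)$ to $L(\SG[q,m]_{(p,n)}, \mu)$ precisely when $\mu \in \ov\cP[q,m]_{(p,n)}^+$, and to $0$ otherwise. Since $\ov{\mf{tr}}$ is just the projection onto the weight spaces lying in $\ov\Xi[q,m]_{(p,n)}$, a positive-definite contravariant Hermitian form on the infinite-rank module restricts to a positive-definite contravariant Hermitian form on the truncation (contravariance is preserved because $\omega$ on $\SG[q,m]_{(p,n)}$ is the restriction of $\omega$ on $\SG[q,m]$, and the truncated submodule is $\SG[q,m]_{(p,n)}$-stable). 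Hence whenever $\xi = \ovla - d\,\mathbbm{1}_{p|q}$ with $\la \in \cP_d(p+m|q+n)$ — which is exactly the condition guaranteeing, by the weight formulas \eqref{weight:Im} and the discussion around $\ov\cP[q,m]_{(p,n)}^+$, that the corresponding infinite-rank weight truncates nontrivially — we get unitarizability of $L(\ovcG[q,m]_{(p,n)}, \xi)$ for free. Conversely, if $\xi \notin \ov\cQ[q,m]_{(p,n)}$, I would argue that $L(\ovcG[q,m]_{(p,n)}, \xi)$ fails to be unitarizable by exhibiting a negative or null direction: either the highest weight itself violates the dominance/integrality needed for any contravariant form to be positive on the Cartan, or one produces an explicit root vector whose norm computation on a singular vector forces non-positivity — here one can again lean on \cite{CLZ} where the "only if" direction is established by an Enright-type completion/Jantzen-filtration argument showing that outside $\ov\cQ[q,m]_{(p,n)}$ the Shapovalov form has a non-positive eigenvalue.

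The main obstacle is the "only if" direction and the precise matching of combinatorial data: one must check that the truncation of the infinite-rank unitarizable module is nonzero *exactly* on the weights in $\ov\cQ[q,m]_{(p,n)}$ (no more, no less), which requires carefully tracking how the constraints $\la_{m+1} \le n$ and $\la_{d-p} \ge -q$ correspond to $\ovla - d\,\mathbbm{1}_{p|q} \in \ov\cP[q,m]_{(p,n)}^+$ via the conditions $(\la^+)'_{n+1} \le m$ and $\la^-_{p+1} \le q$ in the definition of $\ov\cP[q,m]_{(p,n)}^+$. Getting this dictionary right — and confirming that non-unitarizability genuinely propagates downward rather than being an artifact of truncation — is the delicate point; everything else is either combinatorial bookkeeping or a direct appeal to \cite{CLZ} and \propref{trun}.
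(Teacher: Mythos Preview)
The paper gives no proof of this proposition at all: it simply states the result with the parenthetical ``(cf.\ \cite{CLZ})'' immediately preceding it. The reference \cite{CLZ} (Cheng--Lam--Zhang, \emph{J.\ Algebra} \textbf{273} (2004)) already treats the \emph{finite}-rank Lie superalgebra $\gl(p+m|q+n)$ directly and classifies its unitarizable integral highest weight modules; the set $\ov\cQ[q,m]_{(p,n)}$ is precisely a repackaging of their answer in the present paper's notation. So from the paper's point of view this is a citation, not a theorem to be proved.

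Your proposed route---pass to infinite rank, invoke unitarizability there, then descend via the truncation functor---is based on a misreading of what \cite{CLZ} covers. You assume the citation handles the infinite-rank case and that you must then supply the descent; in fact it handles the finite-rank case outright, and no truncation argument is needed. Your ``if'' direction would nonetheless go through (truncation does preserve positive-definite contravariant forms), but your ``only if'' direction has a genuine gap: non-unitarizability at infinite rank does not obviously descend, since the offending null or negative vector could lie outside $\ov\Xi[q,m]_{(p,n)}$ and hence be killed by truncation. You acknowledge this is the ``delicate point'' and then fall back on citing \cite{CLZ} anyway---at which point the entire detour through infinite rank is superfluous. The honest proof here is: match the parametrization $\ov\cQ[q,m]_{(p,n)}$ with the one in \cite{CLZ} (your ``bookkeeping step''), and cite their classification.
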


\section{Quadratic Gaudin Hamiltonians and super KZ equations for $\DG$, $\G_{(p,n)}$ and $\SG[q,m]_{(p,n)}$} \label{sec:quad-G}

In this section, we study the (super) quadratic Gaudin Hamiltonians associated to the Lie (super)algebras $\DG$, $\G_{(p,n)}$ and $\SG[q,m]_{(p,n)}$. We establish linear isomorphisms relating the (generalized) eigenspaces of each quadratic Gaudin Hamiltonian for $\DG$ to the (generalized) eigenspaces of the corresponding quadratic Gaudin Hamiltonian for $\G$ (resp., $\SG[q,m]$).

On the other hand, we introduce the (super) Knizhnik-Zamolodchikov (KZ) equations for the Lie (super)algebras $\SG[q,m]_{(p,n)}$ and $\G_{(p,n)}$. We see that the solutions of these equations are stable under truncation functors, and that the singular solutions of the super KZ equations for $\SG[q,m]$ and $\G$ are related.

\subsection{Quadratic Gaudin Hamiltonians on modules over $\DG$, $\G_{(p,n)}$ and $\SG[q,m]_{(p,n)}$} \label{sec:QGH}

For $i \in \hf \Zs$, let
\begin{equation*}
\delta_i:=
 \begin{cases}
       1 & \mbox{if}  \ \    i<0;\\
       0 & \mbox{if}  \ \   i>0.
     \end{cases}
\end{equation*}
Define the \emph{Casimir symmetric tensors} ${\w{\Omega}}$, ${\Omega}_{(p,n)}$ and ${\ov{\Omega}}[q,m]_{(p,n)}$ for $\DG$, $\G_{(p,n)}$ and $\SG[q,m]_{(p,n)}$ respectively by
\begin{align}
{\w{\Omega}}&:= \sum_{i,j \in \hf \Zs} (-1)^{2j} E_{i,j} \otimes E_{j,i}-\sum_{i \in \hf \Zs} \delta_i (K\otimes E_i+E_i \otimes K), \nonumber \\
{\Omega}_{(p,n)}&:= \sum_{i,j \in \I_{(p,n)}} (-1)^{2j} E_{i,j} \otimes E_{j,i}-\sum_{i \in \I_{(p,n)}} \delta_i (K\otimes E_i+E_i \otimes K), \label{Casimir} \\
{\ov{\Omega}}[q,m]_{(p,n)}&:= \sum_{i,j \in \ov{\I}{[q, m]}_{(p, n)}} (-1)^{2j} E_{i,j} \otimes E_{j,i}-\sum_{i \in \ov{\I}{[q, m]}_{(p, n)}} \delta_i (K\otimes E_i+E_i \otimes K).\nonumber
\end{align}
Here we drop the subscript $(p,n)$ for $(p,n)=(\infty, \infty)$.

For $p \in \Zp$ and $n \in \N$, the Casimir symmetric tensor $\Omega_{(p,n)}$ (resp., $\ov{\Omega}[q,m]_{(p,n)}$) lies in $U(\G_{(p,n)}) \otimes U(\G_{(p,n)})$ (resp., $U(\SG[q,m]_{(p,n)}) \otimes U(\SG[q,m]_{(p,n)}))$ and acts on $M_1 \otimes M_2$ for $M_1, M_2 \in \OO_{(p,n)}$ (resp., $\ov\OO[q,m]_{(p,n)}$).

 \begin{lem} \label{lem:finite_sum}
Let $0 \le r \le p \leq\infty$ and $0 < k \le n \leq\infty$.
Suppose that $v$ is a weight vector of weight $\mu$ in $M  \in \w\OO$ (resp., $\OO_{(p,n)}$ and $\ov\OO[q,m]_{(p,n)}$).
If $\mu \in \w\Xi_{(r,k)}$ (resp., $\Xi_{(r,k)}$ and $\ov\Xi[q,m]_{(r,k)}$), then $E_i v= 0$ for all $i \in \hf \Zs \backslash \w{\I}_{(r,k)}$ (resp., $\I_{(p,n)} \backslash \I_{(r,k)}$ and
$\ov{\I}{[q, m]}_{(p,n)}\backslash \ov{\I}{[q, m]}_{(r,k)}$),
and $E_{i,j} v= 0$ for all $i,j \in \hf \Zs$ (resp., $\I_{(p,n)}$ and $\ov{\I}{[q, m]}_{(p,n)}$) such that $i < j$ (resp., $i < j$ and $i <_{\ov{\I}{[q, m]}} j$) and at least one of $i$ and $j$ does not lie in $\w{\I}_{(r,k)}$ (resp., $\I_{(r,k)}$ and $\ov{\I}{[q, m]}_{(r,k)}$).
 \end{lem}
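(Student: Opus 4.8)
The plan is to extract both statements directly from the description \eqref{weight} of the weight sets $\w\Xi$, $\Xi_{(p,n)}$, $\ov{\Xi}[q,m]_{(p,n)}$, together with \propref{weight-Xi}. I carry out the argument for $M\in\w\OO$; the other two cases are identical, with $\hf\Zs$ replaced by $\I_{(p,n)}$, resp.\ by $\ov{\I}[q,m]_{(p,n)}$ with its total order $<_{\ov{\I}[q,m]}$. From \eqref{weight} I use two facts: (a) a weight in $\w\Xi_{(r,k)}$ has all its $\ep$-coordinates supported on $\w\I_{(r,k)}$, so $\mu(E_\ell)=0$ for every $\ell\in\hf\Zs\setminus\w\I_{(r,k)}$; and (b) every $\nu\in\w\Xi$ satisfies $\nu(E_\ell)\geq 0$ for $\ell>0$ and $\nu(E_\ell)\leq 0$ for $\ell<0$. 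Fact (a) gives the first assertion at once: for $i\in\hf\Zs\setminus\w\I_{(r,k)}$ the element $E_i=E_{i,i}$ lies in $\wfh$, hence $E_iv=\mu(E_i)v=0$.

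For the second assertion, fix $i<j$ with at least one of $i,j$ outside $\w\I_{(r,k)}$. Since $\w\I_{(r,k)}=\setc*{\ell\in\hf\Zs}{-r\leq\ell\leq k}$ and $i<j$, this forces $j>k$ or $i<-r$. Suppose toward a contradiction that $E_{i,j}v\neq 0$. Since $E_{i,j}$ spans a root space of weight $\ep_i-\ep_j$, the vector $E_{i,j}v$ is a weight vector of weight $\mu+\ep_i-\ep_j$, and this weight lies in $\w\Xi$ by \propref{weight-Xi}. If $j>k$, then $j>0$, and using $i\neq j$ together with $\mu(E_j)=0$ (by (a)) we get $(\mu+\ep_i-\ep_j)(E_j)=-1<0$, contradicting (b). If instead $i<-r$, then $i<0$, and using $i\neq j$ together with $\mu(E_i)=0$ (by (a)) we get $(\mu+\ep_i-\ep_j)(E_i)=+1>0$, again contradicting (b). Hence $E_{i,j}v=0$.

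The only point that requires genuine care is the super case, where $<_{\ov{\I}[q,m]}$ is not the numerical order. There the elements of $\ov{\I}[q,m]_{(p,n)}\setminus\ov{\I}[q,m]_{(r,k)}$ are precisely the integers $<-r$, which are $<_{\ov{\I}[q,m]}$-below all of $\ov{\I}[q,m]_{(r,k)}$, and the half-integers $\geq k$, which are $<_{\ov{\I}[q,m]}$-above all of $\ov{\I}[q,m]_{(r,k)}$. Hence $i<_{\ov{\I}[q,m]}j$ with one of $i,j$ outside $\ov{\I}[q,m]_{(r,k)}$ still forces either $j$ to be a half-integer exceeding $k$ or $i$ to be an integer below $-r$, so the eigenvalue computation inside $\ov{\Xi}[q,m]$ --- and, mutatis mutandis, inside $\Xi_{(p,n)}$ for the non-super finite-rank case --- goes through verbatim. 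I expect this order bookkeeping, rather than any substantive difficulty, to be the only delicate point of the proof.
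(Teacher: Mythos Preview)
Your argument is correct and follows essentially the same route as the paper's proof: both derive the first assertion from $\mu(E_i)=0$ and the second by computing the weight $\mu+\ep_i-\ep_j$ of $E_{i,j}v$ and showing one of its coordinates violates the sign constraints defining $\w\Xi$ (resp.\ $\Xi_{(p,n)}$, $\ov\Xi[q,m]_{(p,n)}$). Your treatment is in fact slightly more explicit than the paper's, particularly in isolating facts (a) and (b) and in handling the order bookkeeping for the $\ov{\I}[q,m]$ case.
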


\begin{proof}
 We will only prove the case where
 $M \in \ov\OO[q,m]_{(p,n)}$.
 The other cases can be proved in the same way.
 Let $M \in \ov\OO[q,m]_{(p,n)}$,
 and let $v \in M$ be a weight vector of weight $\mu$. Suppose $\mu \in \ov\Xi[q,m]_{(r,k)}$.
 For any
 $i \in \ov{\I}{[q, m]}_{(p,n)}\backslash \ov{\I}{[q, m]}_{(r,k)}$,
 either $i<_{\ov{\I}{[q, m]}} -r$ or $i>_{\ov{\I}{[q, m]}}k$.
 Clearly, $\mu(E_i)=0$ and hence $E_i v=0$.
Now let $i, j \in \ov{\I}{[q, m]}_{(p,n)}$ with $i <_{\ov{\I}{[q, m]}} j$. Suppose that $i \notin \ov{\I}{[q, m]}_{(r,k)}$. Assume $E_{i,j}v \not=0$, and let $\gamma$ be the weight of $E_{i,j}v$. Then for $i<0$, we have $\gamma(E_i)=1$, and for $i>0$, we have $j>0$ and $\gamma(E_j)=-1$. It follows that $\gamma \notin  \ov\Xi[q,m]_{(p,n)}$, which is a contradiction. Consequently, $E_{i,j}v=0$. The same reasoning also shows that $E_{i,j}v=0$ if $j \notin \ov{\I}{[q, m]}_{(r,k)}$. This completes the proof.
\end{proof}
	
For any $M \in \w\OO$ (resp., $\OO$ and $\ov\OO[q,m]$) and $v\in M$, there are only finitely many $E_i$ and $E_{i,j}$ such that $E_i v\not= 0$ and $E_{i,j} v\not= 0$ for $i < j$ (resp., $i < j$ and $i <_{\ov{\I}{[q, m]}} j$) by \lemref{lem:finite_sum}. Consequently, the operator
$\w{\Omega}$ (resp., ${\Omega}$ and $\ov{\Omega}[q,m]$)
is well defined on $M_1 \otimes M_2$ for $M_1, M_2 \in \w\OO$ (resp., $\OO$ and $\ov\OO[q,m]$).

{\bf  Suppose $\ell \in \N$ with $\ell \ge 2$.
From now on, we use the symbol ${M}^{\otimes}\in \w\OO$ (resp., $\OO_{(p,n)}$ and $\ov\OO[q,m]_{(p,n)}$) to mean that
 \begin{equation}\label{def:M}
 {M}^{\otimes}:= M_1\otimes \cdots \otimes  M_\ell
\end{equation}
 for some $M_1, \ldots, M_\ell \in \w\OO$ (resp., $\OO_{(p,n)}$ and $\ov\OO[q,m]_{(p,n)}$).}

For $x \in \DG$ (resp., ${\G}_{(p,n)}$ and $\ov{\G}[q,m]_{(p,n)}$)  and $i=1, \ldots, \ell$, let
$$
x^{(i)}=\underbrace{1\otimes \cdots \otimes1\otimes \stackrel{i}{x}\otimes1\otimes \cdots \otimes1}_{\ell}.
$$
For any operator $A=\sum_{r\in I} x_r\otimes y_r$, where $x_r, y_r \in \w{\G}$
(resp., ${\G}_{(p,n)}$ and
$\ov{\G}[q,m]_{(p,n)}$),
and for any $i,j \in \{ 1,\ldots, \ell \}$ with $i \not=j$,
we define
$$
A^{(ij)}=\sum_{r\in I}x_r^{(i)}y_r^{(j)}.
$$
Then ${\w{\Omega}}^{(ij)}$ can be viewed as a linear endomorphism on ${M}^{\otimes}$.
Similarly, we can define the linear operator
${\Omega}^{(ij)}_{(p,n)}$ (resp., $\ov{\Omega}[q,m]_{(p,n)}^{(ij)}$)
on the tensor product $M^{\otimes} \in \OO_{(p,n)}$ (resp., $\ov{\OO}[q,m]_{(p,n)}$) in \eqref{def:M}.

Let ${\bf X}_\ell:=\{(z_1, \ldots,z_\ell)\in \C^\ell\,|\, z_i\not=z_j \,\, \mbox{for any $i\not=j$}\}$
be the \emph{configuration space} of $\ell$ distinct points on $\C^\ell$.
For any $i=1, \ldots, \ell$ and $(z_1, \ldots, z_\ell)\in {\bf X}_\ell$,
the \emph{quadratic Gaudin Hamiltonians} $\wcH^i$, $\cH^i_{(p,n)}$ and $\ovcH^i[q,m]_{(p,n)}$
for $\DG$, $\G_{(p,n)}$ and $\SG[q,m]_{(p,n)}$ are respectively defined by
\begin{equation}\label{qcGH}
\wcH^i:=\sum_{\substack{j=1 \\ j\not=i}}^\ell \frac{\w{\Omega}^{(ij)}}{z_i-z_j}, \quad
\cH^i_{(p,n)}:=\sum_{\substack{j=1 \\ j\not=i}}^\ell \frac{{\Omega}^{(ij)}_{(p,n)}}{z_i-z_j}, \quad
\ovcH^i[q,m]_{(p,n)}:=\sum_{\substack{j=1 \\ j\not=i}}^\ell \frac{{\ov{\Omega}}[q, m]^{(ij)}_{(p,n)}}{z_i-z_j}.
\end{equation}
They are well-defined linear endomorphisms on $M^{\otimes}\in \w\OO$, $\OO_{(p,n)}$ and $\ov\OO[q,m]_{(p,n)}$, respectively.

For any $N \in \w\OO$
(resp., $\OO_{(p,n)}$ and $\ov\OO[q,m]_{(p,n)}$),
let
\begin{equation}\label{def:N^sing}
N^{\rm sing}:=\setc*{ v \in N }{E_{i,j} v=0 \mbox{ for all\,\, $i<j$ \big(resp., $i<j$ and $i <_{\ov{\I}{[q, m]}} j$\big)}\!}
\end{equation}
denote the {\em singular space} of $N$ (with respect to the standard Borel subalgebra $\w{\mf b}$ (resp., ${\mf b}_{(p,n)}$ and $\ov{\mf b}[q,m]_{(p,n)}$)).
Any nonzero vector in $N^{\rm sing}$ is called a singular vector.
For any weight $\mu$ of $N$,
the weight space $N_\mu$ is finite-dimensional by definition of
$\w\OO$ (resp., $\OO_{(p,n)}$ and $\ov\OO[q,m]_{(p,n)}$).
We call
$N_\mu^{\rm sing}:=N_\mu \cap N^{\rm sing}$
the singular weight space of $N$ of weight $\mu$.

The Gaudin Hamiltonians
$\wcH^i$ (resp., $\cH^i_{(p,n)}$  and $\ovcH^i[q,m]_{(p,n)}$) mutually commute with each other,
and they are $\DG$(resp., $\G_{(p,n)}$ and $\SG[q,m]_{(p,n)}$)-homomorphisms for $i=1, \ldots, \ell$ (cf. \cite[Proposition 3.5 and Proposition 3.7]{CaL}).
The space $(M^{\otimes})^{\rm sing}$
and its finite-dimensional subspace
$(M^{\otimes})_\mu^{\rm sing}$
are $\wcH^i$-invariant for any weight $\mu$ of $M^{\otimes}$. Thus, we may view
$\wcH^i$
as a linear endomorphism on
$(M^{\otimes})_\mu^{\rm sing}$.
Similarly,
$\cH^i_{(p,n)}$ (resp., $\ovcH^i[q,m]_{(p,n)}$)
 may be viewed as a linear endomorphism on $(M^{\otimes})_\mu^{\rm sing}$ for $M^{\otimes} \in
 \OO_{(p,n)}$ (resp., $\ov\OO[q,m]_{(p,n)}$) and any weight $\mu$ of $M^{\otimes}$.

\begin{lem} \label{wO-O}
Let $M_1, M_2 \in \w\OO$,
and let $v\in M_1 \otimes M_2$
be a weight vector of weight $\mu$.
\begin{enumerate}[\normalfont(i)]
  \item If $\mu \in \Xi$, then $\w\Omega(v)=\Omega(v)$.
  \item If $\mu \in \ov\Xi[q,m]$, then $\w\Omega(v)=\ov\Omega[q,m](v)$.
\end{enumerate}
\end{lem}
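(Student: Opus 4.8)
The plan is to compare the two Casimir tensors $\w\Omega$ and $\Omega$ (respectively $\ov\Omega[q,m]$) term by term and show that, when applied to a weight vector whose weight lies in $\Xi$ (respectively $\ov\Xi[q,m]$), the ``extra'' terms present in $\w\Omega$ but not in $\Omega$ act as zero. First I would recall the explicit formulas in \eqref{Casimir}: $\w\Omega$ runs over all $i,j \in \hf\Zs$, while $\Omega$ runs only over $i,j \in \I = \I_{(\infty,\infty)}$, i.e.\ over half-integers. So the difference $\w\Omega - \Omega$ consists of the diagonal-type terms $(-1)^{2j}E_{i,j}\otimes E_{j,i}$ and the central terms $\delta_i(K\otimes E_i + E_i\otimes K)$ indexed by $i$ and/or $j$ that are nonzero integers, i.e.\ indices not lying in $\I$. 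The key point is that for a vector $v = v_1 \otimes v_2$ of weight $\mu \in \Xi$, \lemref{weight-decomposition} forces both tensor factors to have weights in $\Xi$ as well (after decomposing $v$ into a sum of such pure tensors), and by \propref{weight-Xi}(i) the weights of $M_1, M_2$ already lie in $\w\Xi$; the condition $\mu \in \Xi$ then says $\mu(E_i) = 0$ for every nonzero integer $i$, and by \lemref{weight-decomposition} this means each factor has weight vanishing on $E_i$ for every such $i$. Since the weights of modules in $\w\OO$ are of the form \eqref{weight}, vanishing of the $E_i$-coordinate for $i\in\Zs$ forces the corresponding $\theta$-coordinate to be $0$, which (combined with \lemref{lem:finite_sum} applied with the relevant truncation, or directly from the structure of $L(\w{\mf l},\mu)$) yields $E_{i,j} v_k = 0$ and $E_i v_k = 0$ whenever $i$ or $j$ is a nonzero integer.

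Concretely, the steps I would carry out are: (1) reduce to the case $v = v_1 \otimes v_2$ a pure tensor of weights $\mu_1, \mu_2$ with $\mu_1 + \mu_2 = \mu$, using bilinearity of $\w\Omega$ and $\Omega$; (2) invoke \lemref{weight-decomposition} to get $\mu_1(E_i) = \mu_2(E_i) = 0$ for all $i \in \Zs$ (using $\mu \in \Xi$); (3) show that this forces $E_{i,i'} v_k = 0$ for all $i' \in \hf\Zs$ whenever $i \in \Zs$ (and $k = 1,2$), by the same weight argument as in the proof of \lemref{lem:finite_sum}: if $E_{i,i'} v_k \neq 0$ for some nonzero integer $i$, its weight would have a nonzero $E_i$-coordinate (for $i < 0$) or its weight difference would put it outside $\w\Xi$ — more precisely, one checks that $\mu_k(E_i) = 0$ together with $\mu_k$ being a dominant weight of the Levi forces $E_{i,i'}$ and $E_{i',i}$ to annihilate $v_k$; (4) conclude that every term of $\w\Omega - \Omega$ annihilates $v_1 \otimes v_2$, since each such term contains a factor $E_{i,j}$, $E_{j,i}$, or $E_i$ with $i \in \Zs$ or $j \in \Zs$ acting on one of the two tensor legs, plus observe that the central correction terms $\delta_i(K\otimes E_i + E_i \otimes K)$ for $i \in \Zs$ vanish because $E_i v_k = 0$. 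The second statement, $\w\Omega(v) = \ov\Omega[q,m](v)$ for $\mu \in \ov\Xi[q,m]$, is proved identically: $\ov\Omega[q,m]$ sums over $i,j \in \ov\I[q,m] = \ov\I[q,m]_{(\infty,\infty)}$, so $\w\Omega - \ov\Omega[q,m]$ consists of terms indexed by an $i$ or $j$ outside $\ov\I[q,m]$, and $\mu \in \ov\Xi[q,m]$ (via the analogue of \lemref{weight-decomposition} for $\ov\Xi[q,m]$, or \lemref{weight-decomposition2}(ii)) forces the corresponding matrix units to annihilate each leg.

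The main obstacle I anticipate is making step (3) airtight: one needs that $\mu_k(E_i) = 0$ for a nonzero integer $i$ genuinely forces both $E_{i,i'}v_k = 0$ and $E_{i',i}v_k = 0$, not merely that the ``obvious'' raising operators vanish. This requires using the precise shape of weights in $\w\OO$ from \eqref{weight} — namely that the integer-indexed coordinates $\theta(\la^+)_i$, $\theta(\la^-)_r$ are nonnegative and that vanishing of one of them, combined with $v_k$ generating (over the Levi $\w{\mf l}$) an irreducible module $L(\w{\mf l}, \mu_k)$, kills the relevant matrix units. Alternatively, and perhaps more cleanly, I would apply \lemref{lem:finite_sum} directly: choose $(r,k)$ with $r = 0$ (so that $\w\I_{(0,k)}$ contains no negative integers) — actually, the cleanest route is to note that since $\mu_k \in \w\Xi$ and $\mu_k(E_i) = 0$ for all $i \in \Zs$, one has $\mu_k \in \w\Xi_{(r,k)}$ for suitable finite $(r,k)$ excluding all integer indices, so \lemref{lem:finite_sum} gives $E_{i,j} v_k = 0$ and $E_i v_k = 0$ whenever $i$ or $j$ is an integer. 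Once that reduction is in place the rest is a routine bookkeeping of which terms survive, and the identity $\w\Omega(v) = \Omega(v)$ (resp.\ $= \ov\Omega[q,m](v)$) follows immediately.
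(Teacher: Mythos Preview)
Your proposal is correct and follows essentially the same route as the paper: reduce to a pure tensor $v=v_1\otimes v_2$, use \lemref{weight-decomposition2} (or equivalently \lemref{weight-decomposition} applied coordinatewise) to get $\mu_1,\mu_2$ in $\Xi$ (resp.\ $\ov\Xi[q,m]$), and then kill each extra term $E_{r,s}\otimes E_{s,r}$ by the weight argument that if $r\notin\I$ (resp.\ $\ov\I[q,m]$) and $r<0$ then $E_{r,s}v_k\neq 0$ would have weight outside $\w\Xi$, while if $r>0$ the same applies to $E_{s,r}v_k$. One small caveat: your proposed ``cleanest'' alternative of invoking \lemref{lem:finite_sum} with a truncation $\w\I_{(r,k)}$ that excludes all integer indices cannot work as stated, since every $\w\I_{(r,k)}$ with $k\ge 1$ contains positive integers; stick with the direct weight argument you outline in step~(3), which is exactly what the paper does.
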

\begin{proof}
Let us prove (ii). Suppose $\mu \in \ov\Xi[q,m]$. We may assume that $v=v_1 \otimes v_2$, where $v_i \in M_i$ is a weight vector of weight $\mu_i$ for $i=1, 2$, and $\mu_1+\mu_2=\mu$.
By \lemref{weight-decomposition2}, $\mu_1, \mu_2 \in \ov\Xi[q,m]$. For any $r \in \hf \Zs \backslash \ov\I[q,m]$, we have $\mu_i(E_r)=0$ and hence $E_r v_i=0$ for $i=1, 2$.
Now given any $r, s \in \hf \Zs$ with $r \notin \ov\I[q,m]$ and $i=1, 2$. If $r<0$, then $E_{r, s} v_i=0$, for otherwise the weight of $E_{r, s} v_i$ would lie outside $\w\Xi$. Similarly, if $r>0$, then $E_{s,r} v_i=0$. In any case, $(E_{r,s} \otimes E_{s,r})v=0$ and $(E_{s,r} \otimes E_{r,s})v=0$. This proves (ii). The proof of (i) is similar.
\end{proof}

\begin{lem} \label{corresp}
Let $M^{\otimes} \in \w\OO$,
and let ${v} \in M^{\otimes}
$
be a weight vector of weight $\mu$.
\begin{enumerate}[\normalfont(i)]
  \item If
$\mu  \in \Xi$,
then
$\wcH^i (v)=\cH^i(v)$ for each $i=1, \ldots, \ell$.

  \item If
$\mu  \in \ov{\Xi}[q,m]$,
then
$\wcH^i (v)=\ovcH^i[q,m](v)$ for each $i=1, \ldots, \ell$.
\end{enumerate}
\end{lem}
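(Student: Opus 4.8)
The plan is to reduce \lemref{corresp} directly to \lemref{wO-O} term by term. Recall that
$\wcH^i = \sum_{j \ne i} \w\Omega^{(ij)}/(z_i - z_j)$ and likewise for $\cH^i$ and $\ovcH^i[q,m]$, with the same scalar coefficients $1/(z_i-z_j)$. Hence it suffices to prove that $\w\Omega^{(ij)}(v) = \Omega^{(ij)}(v)$ (in case (i)) and $\w\Omega^{(ij)}(v) = \ov\Omega[q,m]^{(ij)}(v)$ (in case (ii)) for every pair $i \ne j$ and every weight vector $v \in M^\otimes$ of weight $\mu$ lying in $\Xi$, respectively $\ov\Xi[q,m]$.

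First I would fix the pair $(i,j)$ and reduce to the two-fold tensor product. Writing $M^\otimes = M_1 \otimes \cdots \otimes M_\ell$, the operator $\w\Omega^{(ij)}$ acts only on the $i$-th and $j$-th tensor factors and as the identity elsewhere; so, decomposing $v$ as a sum of pure tensors $v_1 \otimes \cdots \otimes v_\ell$ with each $v_t$ a weight vector of weight $\mu_t$ and $\sum_t \mu_t = \mu$, it is enough to handle $\w\Omega(v_i \otimes v_j)$ versus $\Omega(v_i \otimes v_j)$ (resp.\ $\ov\Omega[q,m](v_i \otimes v_j)$) inside $M_i \otimes M_j$. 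Now by \lemref{weight-decomposition2}, since $\mu = \sum_t \mu_t \in \Xi$ (resp.\ $\ov\Xi[q,m]$) and all $\mu_t \in \w\Xi$, each partial sum $\mu_i + \mu_j$ again lies in $\Xi$ (resp.\ $\ov\Xi[q,m]$) — here one applies Lemma~\ref{weight-decomposition2} inductively, or simply notes that the relevant conditions defining $\Xi$ and $\ov\Xi[q,m]$ are additive in the way recorded there. Thus $v_i \otimes v_j$ is a weight vector in $M_i \otimes M_j$ whose weight lies in $\Xi$ (resp.\ $\ov\Xi[q,m]$), and \lemref{wO-O}(i) (resp.\ (ii)) applies verbatim to give $\w\Omega(v_i \otimes v_j) = \Omega(v_i \otimes v_j)$ (resp.\ $= \ov\Omega[q,m](v_i \otimes v_j)$).

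Reassembling: applying the identity just obtained in the $i$-th and $j$-th slots of each pure tensor and summing, we get $\w\Omega^{(ij)}(v) = \Omega^{(ij)}(v)$ (resp.\ $\ov\Omega[q,m]^{(ij)}(v)$), and then dividing by $z_i - z_j$ and summing over $j \ne i$ yields $\wcH^i(v) = \cH^i(v)$ (resp.\ $\ovcH^i[q,m](v)$), proving both (i) and (ii). One small point of care is to confirm that all the infinite sums involved are actually finite on the vectors in question — but this is guaranteed by \lemref{lem:finite_sum}, which ensures only finitely many $E_{r,s}$ and $E_r$ act nontrivially on any given weight vector of a module in $\w\OO$, so every expression above is a genuine finite sum and the manipulations are legitimate.

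The only genuine obstacle is the bookkeeping in the reduction step: one must make sure that passing from the full weight $\mu \in \Xi$ (resp.\ $\ov\Xi[q,m]$) to the two-factor partial weight $\mu_i + \mu_j$ stays inside $\Xi$ (resp.\ $\ov\Xi[q,m]$), since \lemref{wO-O} is stated only for two tensor factors. This is exactly what \lemref{weight-decomposition2} is designed to supply, so once that lemma is invoked the argument is immediate; I expect the proof to be only a few lines, essentially \emph{"reduce to two factors using \lemref{weight-decomposition2}, then quote \lemref{wO-O}."}
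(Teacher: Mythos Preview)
Your proposal is correct and takes essentially the same approach as the paper: the paper's proof is the single line ``This is an immediate consequence of \lemref{wO-O},'' and what you have written is precisely the unpacking of that sentence, supplying the reduction to two tensor factors via \lemref{weight-decomposition2} that the paper leaves implicit.
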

\begin{proof}
This is an immediate consequence of \lemref{wO-O}.
\end{proof}

By the same argument as in the proof of \cite[Proposition 4.5]{ChL}, we obtain the following proposition. Recall the bijections of weights given in \eqnref{cP}.

 \begin{prop} \label{prop:isom}
Let $M \in \w\OO$, and let $\wmu \in \w{\cP}^+$ be a weight of $M$.

\begin{enumerate}[\normalfont(i)]
\item There exists $A_\mu \in U (\tilde{\mf l})$ such that the map
\begin{align*}
 {\ft}^{\wmu}: M_{\wmu}^{\rm sing} & \to T(M)_{\mu}^{\rm sing} \\
v & \mapsto A_\mu v
\end{align*}
is a linear isomorphism.
Moreover, there exists $B_\mu\in U (\tilde{\mf l})$
such that the inverse of ${\ft}^{\wmu}$ is given by
$({\ft}^{\wmu})^{-1}(w)=B_\mu w$ for $w \in T(M)_{\mu}^{\rm sing}$.

\item
 There exists $\bar{A}_\mu \in U (\tilde{\mf l})$ such that the map
 \begin{align*}
\ov{\ft}^{\wmu}_{[q,m]} : M_{\wmu}^{\rm sing} & \to \ov{T}_{[q,m]}(M)_{\ovmu[q,m]}^{\rm sing} \\
v & \mapsto \bar{A}_\mu v
\end{align*}
is a linear isomorphism.
Moreover, there exists $\ov B_\mu\in U (\tilde{\mf l})$
such that the inverse of $\ov{\ft}^{\wmu}_{[q,m]}$ is given by
$\big(\ov{\ft}^{\wmu}_{[q,m]} \big)^{-1}(w)=\ov B_{\mu} w$
for $w \in \ov{T}_{[q,m]}(M)_{\ovmu[q,m]}^{\rm sing}$.
\end{enumerate}
\end{prop}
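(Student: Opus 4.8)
The plan is to leverage super duality (\thmref{thm:SD}) together with a comparison of highest-weight structures on the Levi factor. By \thmref{thm:SD}, the functors $T$ and $\ov T_{[q,m]}$ are equivalences of tensor categories, and in particular they match up parabolic Verma modules and irreducibles according to the bijection \eqnref{cP}. The key observation is that passing from $M$ to $T(M)$ (resp.\ $\ov T_{[q,m]}(M)$) simply discards the weight spaces whose weights do not live in $\fh^*$ (resp.\ $\ovfh[q,m]^*$), so as vector spaces $T(M)$ and $\ov T_{[q,m]}(M)$ are sub-$\fh$-modules (resp.\ sub-$\ov\fh[q,m]$-modules) of $M$. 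The point is then to show that $M_{\wmu}^{\rm sing}$ and $T(M)_{\mu}^{\rm sing}$ have equal dimension and that the isomorphism between them can be realized by the action of a single element of $U(\tilde{\mf l})$ depending only on $\mu$.

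First I would reduce to the level of the Levi subalgebra. Since $M \in \w\OO$ decomposes over $\tilde{\mf l}$ as a direct sum of $L(\tilde{\mf l},\nu)$, $\nu\in\w\cP^+$, and a vector is singular with respect to $\w{\mf b}$ precisely when it is a highest-weight vector for $\tilde{\mf l}$ that is in addition killed by the "extra" positive root vectors (those in $\w\Phi^+\setminus\w Y$), one can organize $M^{\rm sing}_{\wmu}$ as the space of $\tilde{\mf l}$-highest-weight vectors of weight $\wmu$ inside the $\w{\mf b}$-singular part; this is just a restatement of \eqref{def:N^sing} using condition (i) in the definition of $\w\OO$. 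Under $T$, the Levi $\tilde{\mf l}$ corresponds to ${\mf l}$ and under $\ov T_{[q,m]}$ to $\ov{\mf l}[q,m]$, and the weights $\wmu$, $\mu$, $\ovmu[q,m]$ correspond under \eqnref{cP}. Here the crucial input — cited in the excerpt as the analogue of \cite[Proposition 4.5]{ChL} — is that for a fixed target weight $\mu$ there is a universal element $A_\mu\in U(\tilde{\mf l})$ (built from lowering/raising operators inside the Levi, i.e.\ from the combinatorics relating the modified Frobenius coordinates $\theta(\la^\pm)$ to the conjugate partitions $(\la^\pm)'$) whose action carries the $\DG$-singular vectors of weight $\wmu$ isomorphically onto the $\G$-singular vectors of weight $\mu$, together with an inverse element $B_\mu$. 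The construction is "local" in the sense that it depends only on the finitely many coordinates of $\mu$ that are affected, hence one and the same $A_\mu$ works for all $M\in\w\OO$ with $\wmu$ a weight.

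Concretely I would: (1) express $M^{\rm sing}_{\wmu}$, $T(M)^{\rm sing}_{\mu}$ and $\ov T_{[q,m]}(M)^{\rm sing}_{\ovmu[q,m]}$ as spaces of highest-weight vectors for $\tilde{\mf l}$, ${\mf l}$, $\ov{\mf l}[q,m]$ respectively, using Lemma~\ref{weight-decomposition} to see that the relevant weight conditions are detected coordinatewise; (2) invoke \thmref{thm:SD} to get that $T$ and $\ov T_{[q,m]}$ restrict to linear isomorphisms $M^{\rm sing}_{\wmu}\xrightarrow{\sim}T(M)^{\rm sing}_{\mu}$ and $M^{\rm sing}_{\wmu}\xrightarrow{\sim}\ov T_{[q,m]}(M)^{\rm sing}_{\ovmu[q,m]}$ as abstract vector-space isomorphisms (these are honest restrictions of the functors, since both source and target sit inside $M$); and (3) upgrade these abstract isomorphisms to ones implemented by fixed $U(\tilde{\mf l})$-elements by running, verbatim, the argument of \cite[Proposition 4.5]{ChL}: realize the singular vectors explicitly through the Levi action, track how a $\tilde{\mf l}$-highest-weight vector of weight $\wmu$ maps to an ${\mf l}$-highest-weight vector of weight $\mu$, and note that the change is governed by a product of root vectors inside $\tilde{\mf l}$ depending only on $\mu$. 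The element $B_\mu$ (resp.\ $\ov B_\mu$) is obtained symmetrically, and $A_\mu B_\mu$ (resp.\ $B_\mu A_\mu$) acts as a nonzero scalar on the relevant finite-dimensional weight spaces, which can be normalized to $1$.

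The main obstacle I anticipate is step (3): verifying that the isomorphism supplied abstractly by super duality is literally given by a \emph{single} universal element of $U(\tilde{\mf l})$, uniformly in $M$, rather than merely module-by-module. This requires matching the combinatorics of the weight correspondence \eqnref{cP} — the passage between $\theta(\la^\pm)$ and $(\la^\pm)'$ and the truncation conditions $\theta(\la^+)_{n+1/2}=\theta(\la^-)_{p+1/2}=0$ versus $(\la^+)'_{n+1}=(\la^-)'_{p+1}=0$ — with an explicit chain of $\tilde{\mf l}$-operators, and checking that this chain genuinely lands in the singular space (i.e.\ commutes appropriately with the Borel of $\tilde{\mf l}$ and does not leave the weight space in question). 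Fortunately this is exactly the content of the cited \cite[Proposition 4.5]{ChL}, whose argument transports to the present setting without change; I would therefore organize the proof as "the same proof as in \cite[Proposition 4.5]{ChL}" with the dictionary $\tilde{\mf l}\leftrightarrow{\mf l},\ \ov{\mf l}[q,m]$ and the weight bijection \eqnref{cP} made explicit.
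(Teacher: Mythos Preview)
Your proposal ultimately lands on the same proof as the paper: both defer to the argument of \cite[Proposition~4.5]{ChL}, transported via the weight bijection \eqnref{cP}. The paper says nothing more than this, so your step~(3) and your closing paragraph are exactly on target.

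However, your step~(2) contains a misconception worth correcting. The functor $T$ does \emph{not} ``restrict'' to a map $M^{\rm sing}_{\wmu}\to T(M)^{\rm sing}_{\mu}$: by definition $T(M)=\bigoplus_{\gamma\in\fh^*}M_\gamma$, and in general $\wmu\notin\fh^*$ (it has nonzero $\ep_i$-coefficients at integer indices $i$), so $M_{\wmu}\cap T(M)=0$. The two singular weight spaces $M^{\rm sing}_{\wmu}$ and $T(M)^{\rm sing}_{\mu}=M^{\rm sing}_{\mu}$ are genuinely different subspaces of $M$, and no ``restriction of the functor'' carries one to the other. What super duality does give you is a dimension equality (via matching composition multiplicities), but not a canonical linear map. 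The whole content of the proposition is precisely that such a map exists and is implemented by a universal $A_\mu\in U(\tilde{\mf l})$; this is produced directly by the explicit Levi computation of \cite[Proposition~4.5]{ChL} (odd reflections / products of root vectors governed by the combinatorics relating $\theta(\la^\pm)$ to $(\la^\pm)'$), not by first invoking \thmref{thm:SD}. So step~(2) should be deleted: it is both incorrect as stated and unnecessary, since step~(3) establishes the isomorphism from scratch.
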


Let $\w{M}^{\otimes}=\w M_1\otimes \cdots \otimes \w M_\ell \in \w\OO$. By \propref{tensor-functor}, we have
 \[
  T( \w{M}^{\otimes})=T(\w M_1)\otimes \cdots \otimes T(\w M_\ell ) \quad \hbox{and} \quad
    \ov T_{[q,m]}( \w{M}^{\otimes})=\ov T_{[q,m]}(\w M_1)\otimes \cdots \otimes \ov T_{[q,m]}(\w M_\ell ).
  \]
\begin{prop} \label{prop:H=}
For  $\w{M}^{\otimes}:=\w M_1\otimes \cdots \otimes \w M_\ell \in \w\OO$, let  $  {M}^{\otimes}= T( \w{M}^{\otimes})$ and $ \ov {M}^{\otimes}=\ov T_{[q,m]}( \w{M}^{\otimes})$. For $\wmu \in \w{\cP}^+$ and each $i=1, \ldots, \ell$, we have:
\begin{enumerate}[\normalfont(i)]
\item The action of  $\cH^i$ on $({M}^{\otimes})^{\rm sing}_{\mu}$ equals ${\ft}^{\wmu}\circ\wcH^i\circ ({\ft}^{\wmu})^{-1}$.
\item The action of  $\ovcH^i[q,m]$ on $(\ov{M}^{\otimes})^{\rm sing}_{\ov\mu[q,m]}$ equals $\ov{\ft}_{[q,m]}^{\wmu}\circ\wcH^i\circ (\ov{\ft}_{[q,m]}^{\wmu})^{-1}$.
\end{enumerate}
\end{prop}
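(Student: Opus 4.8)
The plan is to reduce the statement to a direct computation using the intertwining operators $A_\mu, B_\mu$ (resp. $\bar A_\mu, \bar B_\mu$) from \propref{prop:isom} together with the fact that $\wcH^i$ acts by the \emph{same formula} as $\cH^i$ (resp. $\ovcH^i[q,m]$) once we are on a weight space with weight in $\Xi$ (resp. $\ov\Xi[q,m]$), which is \lemref{corresp}. Let me describe the argument for part (i); part (ii) is entirely parallel with $\ov{\ft}^{\wmu}_{[q,m]}$, $\ovcH^i[q,m]$, and \lemref{corresp}(ii) in place of $\ft^{\wmu}$, $\cH^i$, and \lemref{corresp}(i).

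First I would unwind the definitions. By \propref{tensor-functor}, $M^\otimes = T(\w M^\otimes) = T(\w M_1)\otimes\cdots\otimes T(\w M_\ell)$ is an object of $\OO$, and $T$ being the identity on the underlying vector space (it merely discards the weight spaces whose weights are not in $\Xi$), the space $T(\w M^\otimes)^{\rm sing}_\mu$ sits inside $\w M^\otimes$ as the set of singular vectors of weight $\mu$, where $\mu\in\cP^+$ corresponds to $\wmu\in\w\cP^+$ under \eqnref{cP}. Note in particular that $\mu\in\Xi$, since all weights of modules in $\OO$ lie in $\Xi$ by \propref{weight-Xi}(i). The map $\ft^{\wmu}\colon (\w M^\otimes)^{\rm sing}_{\wmu}\to T(\w M^\otimes)^{\rm sing}_\mu$ is $v\mapsto A_\mu v$ with two-sided inverse $w\mapsto B_\mu w$. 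So for $w\in (M^\otimes)^{\rm sing}_\mu$, the claim is that $\cH^i(w) = A_\mu\bigl(\wcH^i(B_\mu w)\bigr)$, equivalently $\cH^i = \ft^{\wmu}\circ\wcH^i\circ(\ft^{\wmu})^{-1}$ as operators on $(M^\otimes)^{\rm sing}_\mu$.

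The key steps are then: (1) for $w\in (M^\otimes)^{\rm sing}_\mu$, set $v := (\ft^{\wmu})^{-1}(w) = B_\mu w \in (\w M^\otimes)^{\rm sing}_{\wmu}$; since $A_\mu, B_\mu\in U(\tilde{\mf l})$ and $\wcH^i$ is a $\DG$-homomorphism (in particular commutes with $U(\tilde{\mf l})$ — or at least preserves the relevant weight/singular subspaces), one checks that $\wcH^i(v)$ again lies in $(\w M^\otimes)^{\rm sing}_{\wmu}$, so that $\ft^{\wmu}(\wcH^i(v)) = A_\mu\wcH^i(v)$ makes sense and equals $A_\mu\wcH^i B_\mu w$. (2) Because $\wmu\in\w\cP^+$ and hence $\mu\in\Xi$, every vector in $(\w M^\otimes)^{\rm sing}_{\wmu}$ has weight $\mu\in\Xi$, so \lemref{corresp}(i) gives $\wcH^i(u) = \cH^i(u)$ for all such $u$; applying this to $u=v$ and noting $\cH^i(v) = \cH^i(B_\mu w)$. (3) Finally, $\cH^i$ is a $\G$-homomorphism, so it commutes with $A_\mu\in U(\tilde{\mf l})\subseteq U(\G)$; therefore $A_\mu\cH^i(B_\mu w) = \cH^i(A_\mu B_\mu w) = \cH^i(w)$, using $A_\mu B_\mu = \mathrm{id}$ on $(M^\otimes)^{\rm sing}_\mu$ from \propref{prop:isom}(i). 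Chaining (1)--(3): $\ft^{\wmu}\circ\wcH^i\circ(\ft^{\wmu})^{-1}(w) = A_\mu\wcH^i B_\mu w = A_\mu\cH^i B_\mu w = \cH^i A_\mu B_\mu w = \cH^i(w)$.

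The main obstacle — really the only subtle point — is justifying that $\wcH^i$ commutes with the elements $A_\mu, B_\mu\in U(\tilde{\mf l})$ appropriately, i.e. that Step (1) is legitimate. This does not require $\wcH^i$ to commute with all of $U(\tilde{\mf l})$ on the nose; what one needs is that $\wcH^i$ maps $(\w M^\otimes)^{\rm sing}_{\wmu}$ to itself (which is already recorded before \lemref{wO-O}, since $\wcH^i$ is a $\DG$-homomorphism and hence preserves singular vectors and weights), and that on this subspace the operator $A_\mu$ (resp. $B_\mu$) intertwines the $\DG$-action restricted to $\tilde{\mf l}$ with the corresponding action after applying $T$ — but since $\tilde{\mf l}$ acts the same way before and after $T$ (the functor $T$ only changes the ambient Lie algebra through its action on the "tail" generators $E_{i,j}$ with indices outside the finite range, which annihilate singular vectors of our fixed weight by \lemref{lem:finite_sum}), and $\wcH^i$ commutes with the $\tilde{\mf l}$-action as a $\DG$-homomorphism, everything goes through. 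I would write out this compatibility carefully, citing \lemref{lem:finite_sum} and the $\DG$-equivariance of $\wcH^i$, and then the three-step chain above closes the proof; part (ii) is verbatim the same with the bars inserted.
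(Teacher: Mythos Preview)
Your overall strategy is the right one, and it is exactly the paper's, but Steps (2) and (3) are both misapplied, and the chain
\[
A_\mu \wcH^i B_\mu w \;=\; A_\mu \cH^i B_\mu w \;=\; \cH^i A_\mu B_\mu w
\]
does not hold as written. In Step (2) you assert that vectors in $(\w M^\otimes)^{\rm sing}_{\wmu}$ have weight $\mu\in\Xi$; they do not --- they have weight $\wmu$, and $\wmu$ generally lies outside $\Xi$ (the modified Frobenius coordinates $\theta(\la^\pm)_i$ for integer $i$ are typically nonzero), so \lemref{corresp}(i) cannot be invoked on $B_\mu w$. In Step (3) you write $A_\mu\in U(\tilde{\mf l})\subseteq U(\G)$, but $\tilde{\mf l}$ is the Levi subalgebra of $\DG$ and contains generators such as $E_{1,2}$ with integer indices, which are not in $\G$; so $\cH^i$ being a $\G$-homomorphism says nothing about commutation with $A_\mu$ (and indeed $\cH^i(B_\mu w)$ is not even guaranteed to make sense, since $B_\mu w\notin T(\w M^\otimes)$ in general).

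The repair is the reordering you yourself flag in the final paragraph but do not actually carry out in the chain: commute $\wcH^i$ (not $\cH^i$) past $B_\mu$ first, using that $\wcH^i$ is a $\DG$-homomorphism and $B_\mu\in U(\tilde{\mf l})\subset U(\DG)$, and only then apply \lemref{corresp}(i) to the vector $w$ of weight $\mu\in\Xi$. Concretely,
\[
\ft^{\wmu}\!\circ\wcH^i\!\circ(\ft^{\wmu})^{-1}(w)
= \ft^{\wmu}\bigl(\wcH^i(B_\mu w)\bigr)
= \ft^{\wmu}\bigl(B_\mu\,\wcH^i(w)\bigr)
= \wcH^i(w)
= \cH^i(w),
\]
where the third equality is $\ft^{\wmu}\circ(\ft^{\wmu})^{-1}=\mathrm{id}$ applied to $\wcH^i(w)\in (M^\otimes)^{\rm sing}_\mu$, and only the last equality uses \lemref{corresp}(i). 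This is precisely the paper's computation.
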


\begin{proof}
  We will only prove (i). The proof of (ii) is similar. For $v\in ({M}^{\otimes})^{\rm sing}_{\mu}$, we have
  $$
  {\ft}^{\wmu}\circ\wcH^i\circ ({\ft}^{\wmu})^{-1}(v)={\ft}^{\wmu}\circ\wcH^i(B_\mu v)={\ft}^{\wmu}(B_\mu \wcH^i(v)) =\wcH^i(v)=\cH^i(v).
  $$
  The last equality follows from \lemref{corresp}.
\end{proof}

 Consequently, we have the following theorem.

\begin{thm} \label{thm:eigenvector}
For $\w{M}^{\otimes}:=\w M_1\otimes \cdots \otimes \w M_\ell \in \w\OO$, let  ${M}^{\otimes}= T( \w{M}^{\otimes})$ and $ \ov {M}^{\otimes}=\ov T_{[q,m]}( \w{M}^{\otimes})$. For $\wmu \in \w{\cP}^+$ and each $i=1, \ldots, \ell$, we have:

\begin{enumerate}[\normalfont(i)]
\item The actions of $\wcH^i$ on $(\w{M}^{\otimes})_{\wmu}^{\rm sing}$ and $\cH^i$ on $({M}^{\otimes})^{\rm sing}_{\mu}$ have the same spectrum, and the map ${\ft}^{\wmu}$ gives a linear isomorphism from the (generalized) eigenspace of $\wcH^i$ on $(\w{M}^{\otimes})_{\wmu}^{\rm sing}$ to the (generalized) eigenspace of $\cH^i$ on $({M}^{\otimes})^{\rm sing}_{\mu}$ for each common eigenvalue $c$.

\item The actions of $\wcH^i$ on $(\w{M}^{\otimes})_{\wmu}^{\rm sing}$ and $\ovcH^i[q,m]$ on $(\ov {M}^{\otimes})^{\rm sing}_{\ovmu[q,m]}$ have the same spectrum, and the map $\ov{\ft}^{\wmu}_{[q,m]}$ gives a linear isomorphism from the (generalized) eigenspace of $\wcH^i$ on $(\w{M}^{\otimes})_{\wmu}^{\rm sing}$ to the (generalized) eigenspace of $\ovcH^i[q,m]$ on $(\ov {M}^{\otimes})^{\rm sing}_{\ovmu[q,m]}$ for each common eigenvalue $c$.
  \end{enumerate}
As a consequence, for each $i$, $\ovcH^i[q,m]$ is diagonalizable on $(\ov {M}^{\otimes})^{\rm sing}_{\ovmu[q,m]}$ if and only if $\cH^i$ is diagonalizable on $({M}^{\otimes})^{\rm sing}_{\mu}$. In this case, they have the same spectrum.
\end{thm}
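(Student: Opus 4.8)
The plan is to obtain the theorem as a purely formal consequence of \propref{prop:isom} and \propref{prop:H=}, reducing everything to elementary linear algebra on the three finite-dimensional spaces $(\w M^{\otimes})^{\rm sing}_{\wmu}$, $({M}^{\otimes})^{\rm sing}_{\mu}$ and $(\ov {M}^{\otimes})^{\rm sing}_{\ovmu[q,m]}$ (these are finite-dimensional because weight spaces of modules in $\w\OO$, $\OO$ and $\ov\OO[q,m]$ are, and $(\w M^{\otimes})^{\rm sing}_{\wmu}$ is $\wcH^i$-invariant as already noted). First I would fix $i \in \{1, \ldots, \ell\}$ and apply \propref{prop:isom}(i) with $M = \w M^{\otimes}$: since $T(\w M^{\otimes})^{\rm sing}_{\mu} = ({M}^{\otimes})^{\rm sing}_{\mu}$, this yields a linear isomorphism $\ft^{\wmu} \colon (\w M^{\otimes})^{\rm sing}_{\wmu} \to ({M}^{\otimes})^{\rm sing}_{\mu}$ whose inverse is left multiplication by an element $B_\mu \in U(\tilde{\mf l})$. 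Then \propref{prop:H=}(i) asserts precisely that the endomorphism $\cH^i$ of $({M}^{\otimes})^{\rm sing}_{\mu}$ equals $\ft^{\wmu} \circ \wcH^i \circ (\ft^{\wmu})^{-1}$; in other words, $\cH^i$ and $\wcH^i$ are conjugate by the invertible linear map $\ft^{\wmu}$.

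From this the equality of spectra is immediate, since conjugate endomorphisms of a finite-dimensional vector space share the same characteristic polynomial. For the statement about eigenspaces I would use that conjugation by $\ft^{\wmu}$ is a ring isomorphism on endomorphisms, so $(\cH^i - c)^r = \ft^{\wmu} \circ (\wcH^i - c)^r \circ (\ft^{\wmu})^{-1}$ for every $r \ge 0$ and every scalar $c$; hence $\ft^{\wmu}$ carries $\ker(\wcH^i - c)^r$ isomorphically onto $\ker(\cH^i - c)^r$. Taking $r = 1$ identifies the eigenspaces and taking $r = \dim (\w M^{\otimes})^{\rm sing}_{\wmu}$ identifies the generalized eigenspaces, for each common eigenvalue $c$, which proves (i). Part (ii) is proved by the identical argument, replacing $T$, $\cH^i$, $\ft^{\wmu}$ throughout by $\ov T_{[q,m]}$, $\ovcH^i[q,m]$, $\ov{\ft}^{\wmu}_{[q,m]}$ and invoking \propref{prop:isom}(ii) and \propref{prop:H=}(ii) in place of their part-(i) counterparts.

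For the final claim of the theorem, I would observe that (i) and (ii) together present both $\cH^i$ (acting on $({M}^{\otimes})^{\rm sing}_{\mu}$) and $\ovcH^i[q,m]$ (acting on $(\ov {M}^{\otimes})^{\rm sing}_{\ovmu[q,m]}$) as conjugates of one and the same operator, namely $\wcH^i$ on $(\w M^{\otimes})^{\rm sing}_{\wmu}$. Since a linear operator on a finite-dimensional space is diagonalizable precisely when each of its generalized eigenspaces coincides with the corresponding eigenspace, and this property is preserved by the linear isomorphisms constructed above, the three operators are simultaneously diagonalizable or not; when they are, the coincidence of their spectra is exactly the content of the spectrum equalities in (i) and (ii).

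I do not anticipate a genuine obstacle: the substantive work, namely the construction of the conjugating operators in $U(\tilde{\mf l})$ and the identity $\cH^i = \ft^{\wmu} \circ \wcH^i \circ (\ft^{\wmu})^{-1}$, is already contained in \propref{prop:isom} and \propref{prop:H=} (the latter resting in turn on \lemref{corresp}). What remains requires only care with bookkeeping: checking that $\wcH^i$, being a weight-preserving $\DG$-homomorphism, restricts to a well-defined endomorphism of the finite-dimensional space $(\w M^{\otimes})^{\rm sing}_{\wmu}$ so that its spectrum, eigenspaces and generalized eigenspaces are meaningful there; and remembering to apply \propref{prop:isom} to the tensor-product module $\w M^{\otimes}$ itself rather than to its individual tensor factors.
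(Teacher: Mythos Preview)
Your proposal is correct and follows exactly the paper's intended approach: the theorem is stated immediately after \propref{prop:H=} as a direct consequence of the conjugacy established there (together with \propref{prop:isom}), and you have simply spelled out the elementary linear-algebra details that the paper leaves implicit.
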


\begin{rem}
The Jordan canonical forms of $\wcH^i$ on $(\w{M}^{\otimes})^{\rm sing}_{\wmu}$, $\cH^i$ on $({M}^{\otimes})^{\rm sing}_{\mu}$ and $\ovcH^i[q,m]$ on $(\ov {M}^{\otimes})^{\rm sing}_{\ovmu[q,m]}$ are the same for each fixed $i$.
\end{rem}

\subsection{Super Knizhnik-Zamolodchikov equations for $\DG$, $\G_{(p,n)}$ and $\SG[q,m]_{(p,n)}$}\label{sec:ScKZ}

Fix $\ell \in \N$ with $\ell \geq 2$.
For any nonempty open subset $U$ of
${\bf X}_\ell$
and any finite-dimensional vector space $N$,
let $\mc{D}(U, N)$
denote the set of differentiable functions
from $U$ to $N$.
For $N\in \w{\OO}$
(resp., $\OO_{(p,n)}$
and $\ov{\OO}[q,m]_{(p,n)}$),
let
 \[
 \mc{D}(U, N):=\bigoplus_\mu \mc{D}(U, N_\mu),
 \]
 where $\mu$ runs over all weights of $N$. Note that $\mc{D}(U, N)$ is a module over $\DG$ (resp., $\G_{(p,n)}$ and $\SG[q,m]_{(p,n)}$) for $N \in \w{\OO}$ (resp., $\OO_{(p,n)}$ and $\ov{\OO}[q,m]_{(p,n)}$).

For ${M}^{\otimes} \in \w\OO$ (resp., $\OO_{(p,n)}$ and $\ov\OO[q,m]_{(p,n)}$) and $(z_1, \ldots, z_\ell) \in {\bf X}_\ell$,
each quadratic Gaudin Hamiltonian $\wcH^i$ (resp., $\cH^i_{(p,n)}$ and $\ovcH^i[q,m]_{(p,n)}$) is also a linear endomorphism on $\mc{D}(U, {M}^{\otimes})$.

Fix a nonzero $\kappa \in \C$ and ${\psi}(z_1,\ldots, z_\ell)\in \mc{D}(U, M^{\otimes})$.
We consider the system of partial differential equations
\begin{equation}\label{dKZE_n}
\kappa\frac{\partial}{\partial {z_i}}{\psi}(z_1,\ldots,z_\ell)=\w \cH^i{\psi}(z_1,\ldots,z_\ell),\ \ \ \ \mbox{for}\ i=1,\ldots,\ell.
\end{equation}
The equations \eqref{dKZE_n} are called the \emph{super Knizhnik-Zamolodchikov equations
(super KZ equations for short)}
over $\DG$.
Similarly, we consider the
\emph{KZ equations} over $\G_{(p,n)}$
and \emph{super KZ equations}
over $\SG[p,m]_{(p,n)}$ as follows:
\begin{equation}\label{KZE_n}
\kappa\frac{\partial}{\partial {z_i}}{\psi}(z_1,\ldots,z_\ell)=\cH^i_{(p,n)}{\psi}(z_1,\ldots,z_\ell),
\end{equation}
and
\begin{equation}\label{sKZE_n}
\kappa\frac{\partial}{\partial {z_i}}{\psi}(z_1,\ldots,z_\ell)=\ovcH^i[q,m]_{(p,n)}{\psi}(z_1,\ldots,z_\ell),
\end{equation}
for $ i=1,\ldots,\ell$ and ${\psi}(z_1,\ldots, z_\ell)\in \mc{D}(U, {M}^{\otimes})$, where ${M}^{\otimes}\in \OO_{(p,n)}$ (resp., $\ov\OO[q,m]_{(p,n)}$) in \eqref{KZE_n} (resp., \eqref{sKZE_n}).

For any weight $\mu$ of ${M}^{\otimes}\in \w\OO$, let
 \[
 \w{\rm KZ}(({M}^{\otimes})_\mu):=\setc*{\psi\in \mc{D}(U, ({M}^{\otimes})_\mu)}{\mbox{$\psi$ is a solution of the super KZ equations \eqnref{dKZE_n}}\!}
\]
 and
\[\w{\rm KZ}({M}^{\otimes}):=\bigoplus_\mu \w{\rm KZ}(({M}^{\otimes})_\mu)\]
where $\mu$ runs over all weights of ${M}^{\otimes}$.
 We denote
$$
\mc{D}(U, ({M}^{\otimes})_\mu)^{\rm sing}:=\setc*{ \psi  \in \mc{D}(U, ({M}^{\otimes})_\mu)}{E_{i,j} \psi=0 \mbox{ for all $i<j$}}.
$$
The set of \emph{singular solutions} of the (super) KZ equations of weight $\mu$ on ${M}^{\otimes}$ is defined to be
$$\w\cS(({M}^{\otimes})_\mu):=\w{\rm KZ}(({M}^{\otimes})_\mu)\cap \mc{D}(U, ({M}^{\otimes})_\mu)^{\rm sing}.$$
Denote
$$\w\cS({M}^{\otimes}):=\bigoplus_\mu \w\cS(({M}^{\otimes})_\mu)$$
where $\mu$ runs over all weights of ${M}^{\otimes}$.

Similarly, we can define  ${\rm KZ}_{(p,n)}(({M}^{\otimes})_\mu)$, ${\rm KZ}_{(p,n)}({M}^{\otimes})$, $\cS_{(p,n)}(({M}^{\otimes})_\mu)$ and $\cS_{(p,n)}({M}^{\otimes})$ for $M^{\otimes}\in \OO_{(p,n)}$ and define $\ov{\rm KZ}[q,m]_{(p,n)}(({M}^{\otimes})_\mu)$, $\ov{\rm KZ}[q,m]_{(p,n)}({M}^{\otimes})$, $\ov\cS[q,m]_{(p,n)}(({M}^{\otimes})_\mu)$ and $\ov\cS[q,m]_{(p,n)}({M}^{\otimes})$ for $M^{\otimes} \in \ov\OO[q,m]_{(p,n)}$.

Since $\wcH^i$ (resp., $\cH^i_{(p,n)}$  and $\ovcH^i[q,m]_{(p,n)}$) mutually commute with each other and commute with the action of the Lie (super)algebra $\DG$ (resp., $\G_{(p,n)}$ and $\SG[q,m]_{(p,n)}$), the following proposition is clear.

\begin{prop}\label{KZ-g}
The vector space $\w{\rm KZ}({M}^{\otimes})$ (resp., ${\rm KZ}_{(p,n)}({M}^{\otimes})$
and $\ov{\rm KZ}[q,m]_{(p,n)}({M}^{\otimes})$)
is a $\G$(resp., $\G_{(p,n)}$ and $\SG[q,m]_{(p,n)}$)-module for ${M}^{\otimes}\in \w\OO$ (resp., $\OO_{(p,n)}$ and $\ov\OO[q,m]_{(p,n)}$).
\end{prop}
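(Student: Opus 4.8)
The plan is to verify directly that $\w{\rm KZ}({M}^{\otimes})$ is stable under the action of $\DG$ on $\mc{D}(U, {M}^{\otimes})$ described earlier (and similarly for the other two cases, which are handled identically). Recall that $\DG$ acts on $\mc{D}(U, {M}^{\otimes})=\bigoplus_\mu \mc{D}(U, ({M}^{\otimes})_\mu)$ pointwise in $(z_1,\ldots,z_\ell)\in U$ through its action on ${M}^{\otimes}$; in particular this action commutes with each partial derivative $\partial/\partial z_i$, since $x\in\DG$ acts on a function $\psi$ by $(x\psi)(z_1,\ldots,z_\ell)=x\cdot\psi(z_1,\ldots,z_\ell)$ and differentiation of an $N$-valued function is taken entrywise. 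So the first step is simply to record that $\partial/\partial z_i$ and the $\DG$-action on $\mc{D}(U,{M}^{\otimes})$ commute.

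The second and central step is to use that each quadratic Gaudin Hamiltonian $\wcH^i$ is a $\DG$-homomorphism on ${M}^{\otimes}$ (this is exactly the statement cited from \cite[Proposition 3.5 and Proposition 3.7]{CaL} in Section~\ref{sec:QGH}), hence $\wcH^i$ also commutes with the action of $\DG$ on $\mc{D}(U,{M}^{\otimes})$, as the latter is induced pointwise. Given $\psi\in\w{\rm KZ}({M}^{\otimes})$ and $x\in\DG$, we then compute, for each $i$,
\begin{equation*}
\kappa\frac{\partial}{\partial z_i}(x\psi)=x\left(\kappa\frac{\partial}{\partial z_i}\psi\right)=x\left(\wcH^i\psi\right)=\wcH^i(x\psi),
\end{equation*}
where the first equality uses Step 1, the second uses that $\psi$ solves \eqref{dKZE_n}, and the third uses that $\wcH^i$ commutes with $x$. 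Thus $x\psi$ again solves the super KZ equations, i.e.\ $x\psi\in\w{\rm KZ}({M}^{\otimes})$. One should also note that $\w{\rm KZ}({M}^{\otimes})$ is visibly a $\C$-subspace of $\mc{D}(U,{M}^{\otimes})$ (the equations are linear), so it is a $\DG$-submodule. Finally, since the $\DG$-action preserves the weight-space decomposition only up to a shift (it permutes the summands $\mc{D}(U,({M}^{\otimes})_\mu)$), one remarks that $\w{\rm KZ}({M}^{\otimes})=\bigoplus_\mu\w{\rm KZ}(({M}^{\otimes})_\mu)$ is the full direct sum over all weights, so there is no compatibility obstruction. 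The arguments for ${\rm KZ}_{(p,n)}({M}^{\otimes})$ over $\G_{(p,n)}$ and for $\ov{\rm KZ}[q,m]_{(p,n)}({M}^{\otimes})$ over $\SG[q,m]_{(p,n)}$ are word for word the same, replacing $\wcH^i$ by $\cH^i_{(p,n)}$ and $\ovcH^i[q,m]_{(p,n)}$ respectively and invoking the corresponding homomorphism property from Section~\ref{sec:QGH}.

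There is essentially no obstacle here; the statement is a formal consequence of two facts already in hand (differentiation commutes with the pointwise module action, and the Gaudin Hamiltonians are module homomorphisms that commute among themselves). The only point requiring a line of care is that the $\wcH^i$ are well-defined operators on $\mc{D}(U,{M}^{\otimes})$ at all, which follows because they are well-defined on ${M}^{\otimes}$ by \lemref{lem:finite_sum} and its consequences, and the extension to $\mc{D}(U,{M}^{\otimes})$ is again pointwise. Hence the proof is short and the statement follows.
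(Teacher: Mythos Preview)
Your proposal is correct and follows exactly the approach the paper indicates: the paper simply declares the proposition ``clear'' from the fact that the Gaudin Hamiltonians $\wcH^i$ (resp., $\cH^i_{(p,n)}$ and $\ovcH^i[q,m]_{(p,n)}$) commute with each other and with the action of the ambient Lie (super)algebra, and you have spelled out precisely this argument.
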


For $\psi(z_1, \ldots, z_\ell) \in \cD(U,M^{\otimes})$, where $M^{\otimes}\in \w\OO$, define $T \psi(z_1, \ldots, z_\ell)  \in \cD(U,T(M^{\otimes}))$ and $\ov T_{[q,m]} \psi(z_1, \ldots, z_\ell)  \in \cD(U,\ov T_{[q,m]}(M^{\otimes}))$ by
$$
T\psi=
\begin{cases}
\psi  &  \mbox{if}\ \ \mu\in \Xi,\\
0     & otherwise,
\end{cases}
\qquad \mbox{and} \qquad
\ov T_{[q,m]} \psi=
\begin{cases}
\psi  &  \mbox{if}\ \ \mu\in \ov{\Xi}[q,m],\\
0     & otherwise.
\end{cases}
$$
Let us drop the subscript $(p,n)$ for $(p,n)=(\infty, \infty)$.

\begin{prop}\label{T-KZ}
Let ${\psi}\in \mc{D}(U, M^{\otimes})$, where $M^{\otimes}\in \w\OO$.
\begin{enumerate}[\normalfont(i)]

\item For $\mu\in \Xi$,
${\psi}\in \w{\rm KZ}(({M}^{\otimes})_\mu)$
if and only if
$T{\psi}\in {\rm KZ}(T({M}^{\otimes})_\mu)$,

\item For $\mu \in \ov\Xi[q,m]$,
${\psi}\in \w{\rm KZ}(({M}^{\otimes})_\mu)$
if and only if
$\ov T_{[q,m]}{\psi} \in \ov{\rm KZ}[q,m](\ov T_{[q,m]}({M}^{\otimes})_\mu)$.

\end{enumerate}
\end{prop}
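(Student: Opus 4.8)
The plan is to reduce the assertion to the pointwise identity between the quadratic Gaudin Hamiltonians for $\DG$ and those for $\G$ (resp.\ $\SG[q,m]$) furnished by \lemref{corresp}, together with the trivial fact that differentiation in the variables $z_1,\dots,z_\ell$ commutes with passing to weight components.

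First I would fix the weight: it suffices to take $\psi\in\mc{D}(U,(M^{\otimes})_\mu)$, which is the generality either condition of (i) or (ii) actually refers to. For such a $\psi$ with $\mu\in\Xi$ (resp.\ $\mu\in\ov\Xi[q,m]$), the definition of $T$ (resp.\ $\ov T_{[q,m]}$) recalled just above gives $T\psi=\psi$ (resp.\ $\ov T_{[q,m]}\psi=\psi$); moreover, since $T(M^{\otimes})$ (resp.\ $\ov T_{[q,m]}(M^{\otimes})$) is by construction the direct sum of those weight spaces of $M^{\otimes}$ whose weight lies in $\Xi$ (resp.\ $\ov\Xi[q,m]$), we have $T(M^{\otimes})_\mu=(M^{\otimes})_\mu$ (resp.\ $\ov T_{[q,m]}(M^{\otimes})_\mu=(M^{\otimes})_\mu$). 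So the ambient function spaces match and $T\psi$ (resp.\ $\ov T_{[q,m]}\psi$) is literally $\psi$.

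The heart of the proof is the following. For each fixed $(z_1,\dots,z_\ell)\in U$ the value $\psi(z_1,\dots,z_\ell)$ is a weight vector of weight $\mu$ in $M^{\otimes}$, on which, by \lemref{lem:finite_sum}, the a priori infinite sums defining $\w\Omega$, $\Omega$ and $\ov\Omega[q,m]$ act as finite sums; hence \lemref{corresp} applies verbatim to $\psi(z_1,\dots,z_\ell)$ and yields $\wcH^i(\psi(z_1,\dots,z_\ell))=\cH^i(\psi(z_1,\dots,z_\ell))$ when $\mu\in\Xi$, and $\wcH^i(\psi(z_1,\dots,z_\ell))=\ovcH^i[q,m](\psi(z_1,\dots,z_\ell))$ when $\mu\in\ov\Xi[q,m]$. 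Letting the point vary gives the operator identities $\wcH^i\psi=\cH^i\psi$ and $\wcH^i\psi=\ovcH^i[q,m]\psi$ on $\mc{D}(U,(M^{\otimes})_\mu)$. Therefore the super KZ equations $\kappa\,\partial\psi/\partial z_i=\wcH^i\psi$ hold for all $i$ if and only if $\kappa\,\partial(T\psi)/\partial z_i=\cH^i(T\psi)$ hold for all $i$, that is, if and only if $T\psi\in{\rm KZ}(T(M^{\otimes})_\mu)$; this proves (i). Part (ii) follows in exactly the same way: replacing $T$, $\cH^i$, $\Xi$ by $\ov T_{[q,m]}$, $\ovcH^i[q,m]$, $\ov\Xi[q,m]$, the same computation shows $\psi\in\w{\rm KZ}((M^{\otimes})_\mu)$ if and only if $\ov T_{[q,m]}\psi\in\ov{\rm KZ}[q,m](\ov T_{[q,m]}(M^{\otimes})_\mu)$.

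I do not expect any genuine obstacle: the result is simply an assembly of \lemref{corresp} applied pointwise, combined with the fact that $T$ and $\ov T_{[q,m]}$ act as the identity on functions valued in a fixed weight space whose weight lies in $\Xi$, respectively $\ov\Xi[q,m]$. The only points deserving a moment's attention are that partial differentiation in the $z_i$ commutes with the weight decomposition — so that the reduction to weight-homogeneous $\psi$, and more generally the decoupling of the (super) KZ systems over weights, is legitimate — and that \lemref{lem:finite_sum} is precisely what makes $\w\Omega$, $\Omega$ and $\ov\Omega[q,m]$ well defined on each value $\psi(z_1,\dots,z_\ell)$, so that \lemref{corresp} can be invoked pointwise as claimed.
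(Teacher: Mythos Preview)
Your proof is correct and follows essentially the same route as the paper: the paper's proof simply says ``This follows from \lemref{weight-decomposition} and \lemref{wO-O},'' whereas you invoke \lemref{corresp} (which is itself an immediate consequence of \lemref{wO-O}) together with \lemref{lem:finite_sum}, and you spell out explicitly the pointwise reduction and the fact that $T$ and $\ov T_{[q,m]}$ act as the identity on the relevant weight component. The only cosmetic difference is that the paper cites \lemref{weight-decomposition} to handle the passage from two tensor factors (\lemref{wO-O}) to $\ell$ factors, while you bypass this by citing \lemref{corresp} directly, which already treats the $\ell$-fold tensor product.
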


 \begin{proof}
 This follows from \lemref{weight-decomposition} and \lemref{wO-O}.
\end{proof}

\begin{prop}\label{T-KZ-odd}
With notations as in \propref{prop:isom} and \thmref{thm:eigenvector}. Let $\wmu \in \w{\cP}^+$ be a weight of $\w M ^{\otimes}\in \w\OO$.
 \begin{enumerate}[\normalfont(i)]

 \item If ${\psi}\in  \w{\rm KZ}((\w {M}^{\otimes})_{\wmu})$,
 then
 $A_\mu {\psi} \in  {\rm KZ}(({M}^{\otimes})_\mu)$
 and
 $\bar{A}_\mu {\psi}\in \ov{\rm KZ}[q,m] ((\ov M^{\otimes})_{\ovmu[q,m]})$.

 \item If ${\psi}\in \mc{D}(U, ({M}^{\otimes})_{\mu})$
 and
 $T{\psi}\in {\rm KZ}(({M}^{\otimes})_\mu)$,
 then
 $B_\mu{\psi}\in \w{\rm KZ}((\w{M}^{\otimes})_{\wmu})$.

 \item  \sloppy 
If ${\psi}\in \mc{D}(U,(\ov M^{\otimes})_{\ovmu[q,m]})$
 and
 $\ov T_{[q,m]} {\psi} \in \ov{\rm KZ}[q,m]((\ov {M}^{\otimes})_{\ovmu[q,m]})$, then $\ov{B}_\mu{\psi}\in \w{\rm KZ}((\w{M}^{\otimes})_{\wmu})$.
   \end{enumerate}

 \end{prop}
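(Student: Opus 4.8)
The plan is to prove all three parts by one short, uniform computation that reduces everything to results already in hand. The two structural facts to invoke are: (a) the operators $A_\mu, B_\mu, \bar A_\mu, \ov B_\mu \in U(\tilde{\mf l})$ produced in \propref{prop:isom} are independent of $(z_1,\ldots,z_\ell)$, hence commute with every $\frac{\partial}{\partial z_i}$; and (b) each quadratic Gaudin Hamiltonian $\wcH^i$ is a $\DG$-homomorphism, so it commutes with the $U(\DG)$-action on $\w M^{\otimes}$, in particular with $A_\mu, B_\mu, \bar A_\mu$ and $\ov B_\mu$. I shall also use that these operators are weight-homogeneous (clear from their construction in \propref{prop:isom}): $A_\mu$ shifts every weight-$\wmu$ vector by the fixed weight $\mu-\wmu$, so it carries $(\w M^{\otimes})_{\wmu}$ into $(\w M^{\otimes})_\mu=(M^{\otimes})_\mu$, the last equality because $\mu$, viewed in $\wfh^*$, vanishes on $E_i$ for every integer $i$; similarly $\bar A_\mu$ carries $(\w M^{\otimes})_{\wmu}$ into $(\ov M^{\otimes})_{\ovmu[q,m]}$, while $B_\mu$ and $\ov B_\mu$ carry $(M^{\otimes})_\mu$ and $(\ov M^{\otimes})_{\ovmu[q,m]}$ back into $(\w M^{\otimes})_{\wmu}$. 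Finally, $\mu$ is a weight of $M^{\otimes}\in\OO$, so $\mu\in\Xi$ (\propref{weight-Xi}), and likewise $\ovmu[q,m]\in\ov\Xi[q,m]$.

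For (i), starting from $\psi\in\w{\rm KZ}((\w M^{\otimes})_{\wmu})$, i.e.\ $\kappa\frac{\partial}{\partial z_i}\psi=\wcH^i\psi$, I would compute
\[
\kappa\frac{\partial}{\partial z_i}(A_\mu\psi)=A_\mu\Big(\kappa\frac{\partial}{\partial z_i}\psi\Big)=A_\mu\wcH^i\psi=\wcH^i(A_\mu\psi).
\]
Since $A_\mu\psi$ takes values in $(M^{\otimes})_\mu$ with $\mu\in\Xi$, \lemref{corresp}(i) rewrites the right-hand side as $\cH^i(A_\mu\psi)$, giving $A_\mu\psi\in{\rm KZ}((M^{\otimes})_\mu)$. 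The claim for $\bar A_\mu$ follows from the same computation with $\ov\Xi[q,m]$ and \lemref{corresp}(ii) replacing $\Xi$ and \lemref{corresp}(i).

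Parts (ii) and (iii) are the reverse computation. For (ii): since $\mu\in\Xi$ we have $T\psi=\psi$, so the hypothesis says $\kappa\frac{\partial}{\partial z_i}\psi=\cH^i\psi$; as $\psi$ is valued in $(M^{\otimes})_\mu$ with $\mu\in\Xi$, \lemref{corresp}(i) gives $\cH^i\psi=\wcH^i\psi$, and then
\[
\kappa\frac{\partial}{\partial z_i}(B_\mu\psi)=B_\mu\Big(\kappa\frac{\partial}{\partial z_i}\psi\Big)=B_\mu\wcH^i\psi=\wcH^i(B_\mu\psi),
\]
which, since $B_\mu\psi$ is valued in $(\w M^{\otimes})_{\wmu}$, shows $B_\mu\psi\in\w{\rm KZ}((\w M^{\otimes})_{\wmu})$. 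Part (iii) is word-for-word the same with $\ov T_{[q,m]}$, $\ov B_\mu$, $\ovcH^i[q,m]$, $\ov\Xi[q,m]$ and \lemref{corresp}(ii) replacing $T$, $B_\mu$, $\cH^i$, $\Xi$ and \lemref{corresp}(i), using $\ov T_{[q,m]}\psi=\psi$ because $\ovmu[q,m]\in\ov\Xi[q,m]$.

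I do not expect a serious obstacle; the only point deserving care is the weight-homogeneity used in the first paragraph, namely that $A_\mu$ (and the other three operators) act as claimed on the \emph{whole} weight space $(\w M^{\otimes})_{\wmu}$ and not merely on its singular part as literally stated in \propref{prop:isom}. This is harmless: $A_\mu\in U(\tilde{\mf l})$ may be taken homogeneous by construction, or one may simply replace it by its weight-$(\mu-\wmu)$ component, which does not alter $\ft^{\wmu}$ on singular vectors. Everything else is formal bookkeeping with the definitions of $\w{\rm KZ}$, ${\rm KZ}$, $\ov{\rm KZ}[q,m]$ and of the functors $T$ and $\ov T_{[q,m]}$.
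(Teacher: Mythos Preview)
Your proof is correct and follows essentially the same approach as the paper, which simply cites \propref{prop:isom} and \propref{T-KZ}; you have just unrolled the argument, replacing the appeal to \propref{T-KZ} by its underlying \lemref{corresp} and making explicit the commutation of $A_\mu,B_\mu,\bar A_\mu,\ov B_\mu\in U(\tilde{\mf l})$ with each $\wcH^i$ (which is exactly the content of \propref{KZ-g}). Your final remark on weight-homogeneity is a fair caveat but, as you note, harmless: in the construction behind \propref{prop:isom} the operators are weight-homogeneous, so $A_\mu$ genuinely sends all of $(\w M^{\otimes})_{\wmu}$ into $(M^{\otimes})_\mu$, and similarly for the others.
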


 \begin{proof}
 This follows from Proposition \ref{prop:isom} and \propref{T-KZ}.
\end{proof}

The following result establishes linear isomorphisms between the sets of singular solutions of (super) KZ equations for $\DG$, $\G$ and $\SG[q,m]$.

\begin{prop} \label{sing-sol}
With notations as in \propref{prop:isom} and \thmref{thm:eigenvector}. Let $\wmu \in \w{\cP}^+$ be a weight of $\w M ^{\otimes}\in \w\OO$.

\begin{enumerate}[\normalfont(i)]
\item The map
\begin{align*}
\w\cS((\w{M}^{\otimes})_{\wmu}) & \longrightarrow \cS((M^{\otimes})_{\mu}) \\
\psi & \mapsto A_\mu \psi
\end{align*}
is a linear isomorphism, which we also denote by ${\ft}^{\wmu}$. The inverse of ${\ft}^{\wmu}$ is given by $({\ft}^{\wmu})^{-1}(\varphi)=B_\mu \varphi$ for $\varphi \in \cS((M^{\otimes})_{\mu})$.

\item The map
\begin{align*}
\w\cS((\w{M}^{\otimes})_{\wmu}) & \longrightarrow \ov\cS[q,m]\big((\ov M^{\otimes})_{\ovmu[q,m]}\big)\\
 \psi & \mapsto \bar{A}_\mu \psi
\end{align*}
is a linear isomorphism, which we also denote by $\ov{\ft}^{\wmu}_{[q,m]}$. The inverse of $\ov{\ft}^{\wmu}_{[q,m]}$ is given by $\big(\ov{\ft}^{\wmu}_{[q,m]} \big)^{-1} (\varphi) = \ov{B}_\mu \varphi$ for $\varphi \in \ov\cS[q,m]\big((\ov M^{\otimes})_{\ovmu[q,m]}\big)$.
\end{enumerate}

\end{prop}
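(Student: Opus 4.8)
The plan is to deduce \propref{sing-sol} by combining \propref{prop:isom} with \propref{T-KZ-odd}, the only additional ingredient being a one-line weight bookkeeping. I would give the argument for part (i) in detail and then observe that (ii) follows verbatim after replacing $A_\mu,B_\mu,\ft^{\wmu}$ by $\bar{A}_\mu,\ov B_\mu,\ov{\ft}^{\wmu}_{[q,m]}$, replacing $M^{\otimes}=T(\w M^{\otimes})$ by $\ov M^{\otimes}=\ov T_{[q,m]}(\w M^{\otimes})$, and replacing $\Xi$ by $\ov\Xi[q,m]$.

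First I would record a formal observation. Since $A_\mu\in U(\tilde{\mf l})$ is a \emph{fixed} element acting linearly, and since $(E_{i,j}\psi)(z)=E_{i,j}(\psi(z))$, a function lies in $\cD(U,(\w M^{\otimes})_{\wmu})^{\rm sing}$ precisely when all of its values are singular vectors, and likewise for $M^{\otimes}$. Composing such a function with the fixed linear operator $A_\mu$ again yields a differentiable function into a finite-dimensional space, whose values lie in $(M^{\otimes})^{\rm sing}_{\mu}$ by \propref{prop:isom}(i). Applying the isomorphism $\ft^{\wmu}$ of \propref{prop:isom}(i) pointwise, we conclude that $\psi\mapsto A_\mu\psi$ is a linear isomorphism from $\cD\big(U,(\w M^{\otimes})_{\wmu}\big)^{\rm sing}$ onto $\cD\big(U,(M^{\otimes})_{\mu}\big)^{\rm sing}$, with inverse $\varphi\mapsto B_\mu\varphi$.

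It then remains to show that this isomorphism restricts to one between the subspaces $\w\cS((\w M^{\otimes})_{\wmu})$ and $\cS((M^{\otimes})_{\mu})$. If $\psi\in\w\cS((\w M^{\otimes})_{\wmu})$, then $\psi$ is singular and lies in $\w{\rm KZ}((\w M^{\otimes})_{\wmu})$, so \propref{T-KZ-odd}(i) gives $A_\mu\psi\in{\rm KZ}((M^{\otimes})_{\mu})$; together with the previous paragraph, $A_\mu\psi\in\cS((M^{\otimes})_{\mu})$. Conversely, let $\varphi\in\cS((M^{\otimes})_{\mu})$; we may assume $(M^{\otimes})_{\mu}\neq0$, so $\mu$ is a weight of $M^{\otimes}\in\OO$ and hence $\mu\in\Xi$ by \propref{weight-Xi}. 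Then $T\varphi=\varphi$, so $T\varphi\in{\rm KZ}((M^{\otimes})_{\mu})$, and \propref{T-KZ-odd}(ii) gives $B_\mu\varphi\in\w{\rm KZ}((\w M^{\otimes})_{\wmu})$; with the previous paragraph, $B_\mu\varphi\in\w\cS((\w M^{\otimes})_{\wmu})$. Since $A_\mu B_\mu$ and $B_\mu A_\mu$ act as the identity on $(M^{\otimes})^{\rm sing}_{\mu}$ and on $(\w M^{\otimes})^{\rm sing}_{\wmu}$ respectively by \propref{prop:isom}(i), the two maps are mutually inverse, which proves (i). Part (ii) is identical, using \propref{prop:isom}(ii), the $\bar A_\mu$ statement in \propref{T-KZ-odd}(i), \propref{T-KZ-odd}(iii), and the fact that any weight $\ovmu[q,m]$ of $\ov M^{\otimes}\in\ov\OO[q,m]$ lies in $\ov\Xi[q,m]$ (so that $\ov T_{[q,m]}\varphi=\varphi$).

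I do not anticipate a genuine obstacle: the analytic content --- that $A_\mu$ and $B_\mu$ intertwine the respective KZ systems --- is exactly \propref{T-KZ-odd}, and the algebraic content --- that $A_\mu$ and $B_\mu$ are mutually inverse on the singular weight spaces --- is \propref{prop:isom}. The only steps that require (routine) care are formal: that the pointwise action of a fixed element of $U(\tilde{\mf l})$ preserves both differentiability and the singularity condition, and the small weight check allowing one to replace $T\varphi$ (resp.\ $\ov T_{[q,m]}\varphi$) by $\varphi$ so that parts (ii) and (iii) of \propref{T-KZ-odd} apply.
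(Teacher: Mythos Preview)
Your proposal is correct and follows essentially the same approach as the paper, which simply states that the result follows from \propref{prop:isom} and \propref{T-KZ-odd}. You have carefully expanded the details (the pointwise application of $A_\mu,B_\mu$, the weight check $\mu\in\Xi$ via \propref{weight-Xi}, and the use of both directions of \propref{T-KZ-odd}), but the underlying argument is identical.
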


\begin{proof}
This follows from \propref{prop:isom} and \propref{T-KZ-odd}.
\end{proof}

 For the truncation functor
 ${\mf{tr}}^{(p,n)}_{(r,k)}$ (resp., $\ov{\mf{tr}}[q,m]^{(p,n)}_{(r,k)}$) and
 ${\psi}(z_1,\ldots,z_\ell) \in \mc{D}(U,(M^{\otimes})_\mu)$, where $\mu$ is a weight of $M^{\otimes}\in {\mc {O}}_{(p,n)}$
 (resp., $\ov{\mc {O}}[q,m]_{(p,n)}$),
we define the function ${\mf{tr}}^{(p,n)}_{(r,k)}({\psi}) \in \mc{D}\big(U,{\mf{tr}}^{(p,n)}_{(r,k)}(M^{\otimes})_\mu\big)$ (resp., $\ov{\mf{tr}}[q,m]^{(p,n)}_{(r,k)}({\psi}) \in \mc{D}\big(U, \ov{\mf{tr}}[q,m]^{(p,n)}_{(r,k)}(M^{\otimes})_\mu \big)$) by
$$
\mf{tr}^{(p,n)}_{(r,k)}({\psi})=
\begin{cases}
\psi  &  \mbox{if}\ \ \mu\in \Xi_{(r,k)},\\
0     & otherwise,
\end{cases}
$$
$$
\mbox{\Bigg(resp.,}\ \ \ \ \
\ov{\mf{tr}}[q,m]^{(p,n)}_{(r,k)}({\psi})=
\begin{cases}
\psi  &  \mbox{if}\ \ \mu\in \ov{\Xi}[q,m]_{(r,k)},\\
0     & otherwise.
\end{cases}\Bigg)
$$
By Lemma \ref{lem:finite_sum}, we obtain the following.

\begin{prop}\label{tr-KZ}
Let $0 \le r \le p \leq\infty$, $0<k \le n \leq\infty$.

\begin{enumerate}[\normalfont(i)]
\item For $\mu\in \Xi_{(r,k)}$, $M^{\otimes}\in{\mc {O}}_{(p,n)}$  and ${\psi}\in \mc{D}(U, (M^{\otimes})_\mu)$, we have
\[
\quad
{\psi}\in {\rm KZ}_{(p,n)}((M^{\otimes})_\mu) \Leftrightarrow \mf{tr}^{(p,n)}_{(r,k)}({\psi})\in {\rm KZ}_{(r,k)}\big(\mf{tr}^{(p,n)}_{(r,k)}(M^{\otimes})_\mu\big)\]
and
\[{\psi}\in \cS_{(p,n)}((M^{\otimes})_\mu) \Leftrightarrow \mf{tr}^{(p,n)}_{(r,k)}({\psi})\in \cS_{(r,k)}\big(\mf{tr}^{(p,n)}_{(r,k)}(M^{\otimes})_\mu\big).
\]

\item For $\mu\in \ov\Xi[q,m]_{(r,k)}$, $M^{\otimes}\in \ov{\mc {O}}[q,m]_{(p,n)}$, and ${\psi}\in \mc{D}(U, (M^{\otimes})_\mu)$,
we have
\[
\quad
{\psi}\in \ov{\rm KZ}[q,m]_{(p,n)}((M^{\otimes})_\mu) \Leftrightarrow \ov{\mf{tr}}[q,m]^{(p,n)}_{(r,k)}({\psi})\in \ov{\rm KZ}[q,m]_{(r,k)}\big(\ov{\mf{tr}}[q,m]^{(p,n)}_{(r,k)}(M^{\otimes})_\mu\big)\]
and
\[{\psi}\in \ov\cS[q,m]_{(p,n)}((M^{\otimes})_\mu) \Leftrightarrow \ov{\mf{tr}}[q,m]^{(p,n)}_{(r,k)}({\psi})\in
\ov\cS[q,m]_{(r,k)}\big(\ov{\mf{tr}}[q,m]^{(p,n)}_{(r,k)}(M^{\otimes})_\mu\big).
\]
\end{enumerate}
\end{prop}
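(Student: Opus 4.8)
The plan is to observe that, as soon as the weight $\mu$ lies in $\Xi_{(r,k)}$ (resp.\ $\ov\Xi[q,m]_{(r,k)}$), the truncated function solves literally the same system of differential equations and satisfies the same singularity condition, so that the two equivalences become tautological. Since $\mf{tr}^{(p,n)}_{(r,k)}(\psi)=\psi$ whenever $\psi$ is valued in $(M^{\otimes})_\mu$ with $\mu\in\Xi_{(r,k)}$, the content of part (i) reduces to two assertions for such a $\psi$: (a) $\cH^i_{(p,n)}\psi=\cH^i_{(r,k)}\psi$ for every $i$; and (b) $E_{a,b}\psi=0$ for all raising operators $E_{a,b}$ of $\G_{(p,n)}$ if and only if $E_{a,b}\psi=0$ for all raising operators of $\G_{(r,k)}$. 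I would establish (i) in detail and then read off (ii) by the same argument, with the natural order replaced by $<_{\ov\I[q,m]}$, the Casimir tensor, truncation functor and Hamiltonians replaced by their barred counterparts, and \lemref{lem:finite_sum}, \lemref{weight-decomposition} applied in the categories $\ov\OO[q,m]_{(p,n)}$.

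Assertion (b) is immediate from \lemref{lem:finite_sum}: for each $z\in U$ the value $\psi(z)$ is a weight vector of weight $\mu\in\Xi_{(r,k)}$ in $M^{\otimes}\in\OO_{(p,n)}$, so $E_{a,b}\psi=0$ whenever $a<b$ and $\{a,b\}\not\subseteq\I_{(r,k)}$; hence annihilation by the raising operators of $\G_{(r,k)}$ already forces annihilation by those of $\G_{(p,n)}$, and the converse is trivial. Therefore $\mc{D}(U,(M^{\otimes})_\mu)^{\rm sing}$ is the same whether computed inside $\OO_{(p,n)}$ or inside $\OO_{(r,k)}$, and it is precisely the space appearing in the definition of both $\cS_{(p,n)}((M^{\otimes})_\mu)$ and $\cS_{(r,k)}(\mf{tr}^{(p,n)}_{(r,k)}(M^{\otimes})_\mu)$.

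Assertion (a) is the main point. By \eqref{qcGH} it suffices to show $\Omega^{(ab)}_{(p,n)}w=\Omega^{(ab)}_{(r,k)}w$ for $w\in(M^{\otimes})_\mu$ and every pair $a\ne b$; working in the two relevant tensor slots, this reduces by bilinearity to showing that $\Omega_{(p,n)}$ and $\Omega_{(r,k)}$ agree on $(M_a)_{\mu_a}\otimes(M_b)_{\mu_b}$ for weights $\mu_a,\mu_b$ of $M_a,M_b$ summing to $\mu$. One first applies \lemref{weight-decomposition}, iterated over the $\ell$ factors, to conclude that each such $\mu_a$ again lies in $\Xi_{(r,k)}$. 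Then, comparing \eqref{Casimir} term by term, the difference $\Omega_{(p,n)}-\Omega_{(r,k)}$ is the sum of the terms $(-1)^{2j}E_{i,j}\otimes E_{j,i}$ with $i,j\in\I_{(p,n)}$ and $\{i,j\}\not\subseteq\I_{(r,k)}$, plus the terms $\delta_i(K\otimes E_i+E_i\otimes K)$ with $i\in\I_{(p,n)}\setminus\I_{(r,k)}$. A diagonal term $i=j$ acts on the first factor by the scalar $\mu_a(E_i)=0$; in each off-diagonal term the index that lies outside $\I_{(r,k)}$ occurs among the two indices of a raising operator appearing in one of the two slots, and that raising operator annihilates its $\Xi_{(r,k)}$-weight factor by \lemref{lem:finite_sum}; finally the $\delta_i$-terms vanish because $\delta_i=0$ for $i>0$ while $E_i$ acts by $\mu_a(E_i)=0$ for $i<0$ outside $\I_{(r,k)}$. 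Thus every term of $\Omega_{(p,n)}-\Omega_{(r,k)}$ annihilates $(M_a)_{\mu_a}\otimes(M_b)_{\mu_b}$, which gives (a); the a priori infinite sums cause no difficulty since \lemref{lem:finite_sum} makes them finite on these weight vectors.

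Putting (a) and (b) together, on $\mc{D}(U,(M^{\otimes})_\mu)$ the equation $\kappa\,\partial_{z_i}\psi=\cH^i_{(p,n)}\psi$ coincides with $\kappa\,\partial_{z_i}\psi=\cH^i_{(r,k)}\psi$, and the two singularity conditions coincide, giving both equivalences of (i); part (ii) is obtained verbatim with the barred notation. I expect the only genuinely delicate step to be the term-by-term matching of the Casimir tensors in (a)---specifically the bookkeeping that in each surviving off-diagonal term some tensor factor is hit by a raising operator carrying a ``large'' index---while everything else is formal.
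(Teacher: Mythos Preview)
Your proof is correct and follows exactly the approach the paper intends: the paper's own proof is the single sentence ``By \lemref{lem:finite_sum}, we obtain the following,'' and you have supplied the details that this sentence suppresses, including the use of \lemref{weight-decomposition} to push the weight constraint $\mu\in\Xi_{(r,k)}$ down to each tensor factor before invoking \lemref{lem:finite_sum}. The only cosmetic point is a harmless sign slip in the $\delta_i$-terms (the Casimir in \eqref{Casimir} carries a minus sign there), which does not affect the vanishing argument.
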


\section{Quadratic Gaudin Hamiltonians and super KZ equations for $\cG_{(p,n)}$ and $\ovcG[q,m]_{(p,n)}$} \label{sec:quad-cG}

In this section, we study the quadratic Gaudin Hamiltonians for the general linear Lie superalgebra $\ovcG[q,m]_{(p,n)}$ on the tensor product of unitarizable highest weight modules and their diagonalization.

On the other hand, we relate the set of singular solutions of the (super) KZ equations for $\ovcG[q,m]_{(p,n)}$ to the set of singular solutions of the KZ equations for $\cG_{(r,k)}$ for $r$ and $k$ sufficiently large.

\subsection{Quadratic Gaudin Hamiltonians for $\cG_{(p,n)}$ and $\ovcG[q,m]_{(p,n)}$}
Fix $m,p,q \in \Zp$ and $n \in \N$.
The Casimir symmetric tensor $\mathring{\Omega}_{(p,n)}$ for $\cG_{(p,n)}$ is defined by
$$
\mathring{\Omega}_{(p,n)}:=\sum_{i,j\in {\I}_{(p,n)}} (-1)^{2j} E_{i,j} \otimes E_{j,i}.
$$
The Casimir symmetric tensor $\mathring{\ov{\Omega}}[q,m]_{(p,n)}$ for $\ovcG[q,m]_{(p,n)}$ is defined similarly with $\I_{(p,n)}$ being replaced with  $\ov{\I}{[q, m]}_{(p, n)}$.
Clearly, $\mathring{\Omega}_{(p,n)}$ and $\mathring{\ov{\Omega}}[q,m]_{(p,n)}$ lie in $U(\cG_{(p,n)}) \otimes U(\cG_{(p,n)})$ and $U(\ovcG[q,m]_{(p,n)}) \otimes U(\ovcG[q,m]_{(p,n)})$, respectively.
A straightforward calculation shows that
\begin{eqnarray} \label{Om-Om}
  \iota\otimes \iota \big(\mathring{\Omega}_{(p,n)}\big)
  &=&\Omega_{(p,n)}-pK\otimes K, \\
  \iota\otimes \iota \big(\mathring{\ov\Omega}[q,m]_{(p,n)}\big)
  &=&\ov\Omega[q,m]_{(p,n)}+(p-q)K\otimes K. \nonumber
\end{eqnarray}
where $\iota$ are the isomorphisms defined in \eqnref{iso-e}.

Fix $\ell \in \N$ with $\ell \geq 2$.
For any $i=1, \ldots, \ell$ and $(z_1, \ldots, z_\ell)\in {\bf X}_\ell$,
the \emph{quadratic Gaudin Hamiltonian}s
$H^i_{(p,n)}$ for ${\cG}_{(p,n)}$
and $\ov H^i [q,m]_{(p,n)}$ for ${\ovcG}[q,m]_{(p,n)}$ are defined by
\begin{equation} \label{qGH}
 H^i_{(p, n)}:=\sum_{\substack{j=1\\ j\neq i}}^\ell\frac{\mathring{\Omega}^{(ij)}_{(p,n)}}{z_i-z_j} \quad \mbox{and} \quad \ov H^i [q,m]_{(p,n)}:=\sum_{\substack{j=1\\ j\neq i}}^\ell\frac{\mathring{\ov{\Omega}}[q,m]_{(p,n)}^{(ij)}}
 {z_i-z_j}.
\end{equation}

For any $\cG_{(p,n)}$(resp., $\ovcG[q,m]_{(p,n)}$)-module $N$, let
\begin{equation}\label{def:sing}
N^{\rm sing}:=\setc*{ v \in N }{E_{i,j} v=0 \mbox{ for all\,\, $i<j$ \big(resp., $i <_{\ov{\I}{[q, m]}} j$\big)}\!}
\end{equation}
denote the {\em singular space} of $N$ (with respect to the standard Borel subalgebra ${\bf b}_{(p,n)}$ (resp., $\ov{{\bf b}}[q,m]_{(p,n)}$)).
Any nonzero vector in $N^{\rm sing}$ is called a singular vector.
For any weight $\mu$ of $N$, we call
$N_\mu^{\rm sing}:=N_\mu \cap N^{\rm sing}$
the singular weight space of $N$ of weight $\mu$.

Let ${M}^{\otimes}$ (resp., $\ov M^{\otimes}$) be a $\cG_{(p,n)}$(resp., $\ovcG[q,m]_{(p,n)}$)-module defined as in \eqnref{def:M}. The Hamiltonians $H^i_{(p, n)}$ (resp., $\ov H^i [q,m]_{(p,n)}$) are linear endomorphisms on ${M}^{\otimes}$ (resp., $\ov M^{\otimes}$). Similar to the cases for central extensions, $H^i_{(p,n)}$ (resp., $\ov{H}^i[q,m]_{(p,n)}$) on ${M}^{\otimes}$ (resp., $\ov M^{\otimes}$) mutually commute with each other, and they commute with the action of $\cG_{(p,n)}$ (resp., $\ovcG[q,m]_{(p,n)}$).
It is clear that for any weight $\mu$ of ${M}^{\otimes}$ (resp., $\ov M^{\otimes}$), the subspace $({M}^{\otimes})_{\mu}^{\rm sing}$ (resp., $(\ov M^{\otimes})_{\mu}^{\rm sing})$) is ${H}^i_{(p,n)}$(resp., $\ov{H}^i[q,m]_{(p,n)}$)-invariant.

Each $\G_{(p,n)}$(resp., $\SG[q,m]_{(p,n)}$)-module may be regarded as a $\cG_{(p,n)}$(resp., $\ovcG[q,m]_{(p,n)}$)-module via the isomorphism $\iota$ defined in \eqref{iso-e}. Clearly, the set of singular vectors in a module $N$ regarded as a $\G_{(p,n)}$(resp., $\SG[q,m]_{(p,n)}$)-module equals the set of singular vectors in the module $N$ regarded as a $\cG_{(p,n)}$(resp., $\ovcG[q,m]_{(p,n)}$)-module.

The quadratic Gaudin Hamiltonians for $\cG_{(p,n)}$ (resp., $\ovcG[q,m]_{(p,n)}$) and those for $\G_{(p,n)}$ (resp., $\SG[q,m]_{(p,n)}$) are related as follows.

\begin{prop}\label{prop:iota_corresp}
Let  $M^{\otimes}$ be a $\G_{(p,n)}$-module  and $\ov M^{\otimes}$ a $\SG[q,m]_{(p,n)}$-module. For $i=1, \ldots, \ell$, we have:

\begin{enumerate}[\normalfont(i)]

\item $v \in (M^{\otimes})^{\rm sing}$ is a (generalized) eigenvector of $\cH^i_{(p,n)}$ if and only if $v$ is a (generalized) eigenvector of $H^i_{(p,n)}$. Consequently, $\cH^i_{(p,n)}$ is diagonalizable on $(M^{\otimes})^{\rm sing}$  if and only if $H^i_{(p,n)}$ is diagonalizable on $(M^{\otimes})^{\rm sing}$.

\item $\ov v \in (\ov M^{\otimes})^{\rm sing}$  is a (generalized) eigenvector of $\ovcH^i[q,m]_{(p,n)}$ if and only if $\ov v$ is a (generalized) eigenvector of $\ov H^i[q,m]_{(p,n)}$. Consequently, $\ovcH^i[q,m]_{(p,n)}$ is diagonalizable on $(\ov M^{\otimes})^{\rm sing}$ if and only if $\ov H^i[q,m]_{(p,n)}$ is diagonalizable on $(\ov M^{\otimes})^{\rm sing}$.

\end{enumerate}

\end{prop}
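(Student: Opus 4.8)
The plan is to reduce everything to the identities \eqnref{Om-Om} together with the centrality of $K$. Recall that a $\G_{(p,n)}$-module (resp.\ $\SG[q,m]_{(p,n)}$-module) is regarded as a $\cG_{(p,n)}$-module (resp.\ $\ovcG[q,m]_{(p,n)}$-module) through the isomorphism $\iota$ of \eqnref{iso-e}, and that by the observation preceding the proposition the singular space $(M^{\otimes})^{\rm sing}$ is the same for either structure. Applying $\iota\otimes\iota$ to $\mathring{\Omega}_{(p,n)}$ and $\mathring{\ov{\Omega}}[q,m]_{(p,n)}$ and substituting into \eqnref{qGH}, one reads off from \eqnref{Om-Om} that, as operators on $M^{\otimes}$ and $\ov M^{\otimes}$ respectively,
\[
H^i_{(p,n)}=\cH^i_{(p,n)}-p\,C^i
\qquad\text{and}\qquad
\ov H^i[q,m]_{(p,n)}=\ovcH^i[q,m]_{(p,n)}+(p-q)\,C^i,
\]
where $C^i:=\sum_{j\neq i}\frac{K^{(i)}K^{(j)}}{z_i-z_j}$. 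Thus in each case the two Hamiltonians differ only by the correction operator $C^i$ built out of the central element $K$.

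The crucial point is that each $K^{(i)}$ is a $\G_{(p,n)}$-module endomorphism of $M^{\otimes}$ (being central, $K^{(i)}$ commutes with the diagonal action), it commutes with every $\cH^j_{(p,n)}$ and $H^j_{(p,n)}$ (these are built from the $\Omega^{(ab)}_{(p,n)}$, in which $K$ appears only centrally), and it acts semisimply on $M^{\otimes}$ since $K$ lies in the Cartan subalgebra and all modules under consideration are weight modules. Hence the $K^{(i)}$ are simultaneously diagonalizable and $M^{\otimes}=\bigoplus_{\mathbf c}M^{\otimes}[\mathbf c]$, the direct sum of their common eigenspaces indexed by $\mathbf c=(c_1,\dots,c_\ell)$; this decomposition is respected by $(M^{\otimes})^{\rm sing}$ and by all of the Hamiltonians. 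On the summand $(M^{\otimes})^{\rm sing}\cap M^{\otimes}[\mathbf c]$ the operator $C^i$ is multiplication by the scalar $\sum_{j\neq i}\frac{c_ic_j}{z_i-z_j}$, so there $H^i_{(p,n)}$ and $\cH^i_{(p,n)}$ differ by a scalar.

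The two assertions now follow at once. Adding a scalar to an operator changes its eigenvalues but not its (generalized) eigenvectors, so a vector in $(M^{\otimes})^{\rm sing}\cap M^{\otimes}[\mathbf c]$ is a (generalized) eigenvector of $\cH^i_{(p,n)}$ precisely when it is a (generalized) eigenvector of $H^i_{(p,n)}$; expressing an arbitrary singular vector as a sum of such components gives part (i), and the diagonalizability statement follows because both operators are block-diagonal along the $M^{\otimes}[\mathbf c]$ and agree up to a scalar on each block. Part (ii) is proved verbatim, using the second identity in \eqnref{Om-Om} and the singular space taken with respect to $\ov{\bf b}[q,m]_{(p,n)}$. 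There is no genuine obstacle here; the only step needing a moment's care is the semisimplicity of the $K^{(i)}$, which is exactly what permits the reduction to the case where the correction $C^i$ is an honest scalar, a constant shift being harmless both for the eigenvector structure and for diagonalizability.
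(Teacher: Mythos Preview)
Your approach coincides with the paper's: both reduce to the identity \eqnref{Om-Om} and observe that the two Hamiltonians differ by the operator $C^i$ built from the central $K^{(j)}$'s. The paper is terser, simply asserting that $C^i(\ov v)$ is ``a fixed scalar multiple of $\ov v$'' and concluding.

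There is, however, a gap in your last paragraph. Having decomposed $v=\sum_{\mathbf c}v_{\mathbf c}$ along the joint $K^{(j)}$-eigenspaces and verified the equivalence on each block $M^{\otimes}[\mathbf c]$, you write that ``expressing an arbitrary singular vector as a sum of such components gives part (i)''. This inference fails: if $v$ is an eigenvector of $\cH^i_{(p,n)}$ with eigenvalue $\lambda$, then each nonzero component $v_{\mathbf c}$ is an eigenvector of $H^i_{(p,n)}$ with eigenvalue $\lambda-p\sum_{j\neq i}\tfrac{c_ic_j}{z_i-z_j}$, and these shifts vary with $\mathbf c$; hence $v$ itself is in general only a sum of eigenvectors of $H^i_{(p,n)}$ with \emph{distinct} eigenvalues, not an eigenvector. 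The same obstruction applies to generalized eigenvectors. The paper's phrase ``fixed scalar multiple'' sidesteps this only because, in every application, each factor $M_j$ is an irreducible highest weight module on which $K$ acts by a single scalar; under that tacit hypothesis your block decomposition has a single block and the issue evaporates. Your argument for the \emph{diagonalizability} consequence, by contrast, is correct as written: the operators are block-diagonal along the $M^{\otimes}[\mathbf c]$ and differ by a scalar on each block, so one is diagonalizable on $(M^{\otimes})^{\rm sing}$ if and only if the other is.
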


\begin{proof}
We only prove (ii). The proof of (i) is similar.
By \eqref{Om-Om}, we find that
$$
\ov H^i[q,m]_{(p,n)}(\ov v)=\ov \cH^i[q,m]_{(p,n)}(\ov v)+(p-q)\sum_{\substack{j=1 \\ j\not=i}}^\ell \frac{K^{(i)}  K^{(j)}}{z_i-z_j}(\ov v).
$$
The last term on the right hand side is clearly a fixed scalar multiple of $\ov v$. This proves (ii).
\end{proof}

\subsection{Diagonalization and correspondences between eigenbases}

We set
$$
L(\cG_{(p,n)}, \un{\xi}):=L(\cG_{(p,n)}, \xi_1) \otimes \cdots \otimes L(\cG_{(p,n)}, \xi_\ell)
$$
$$
\left(\textrm{resp., }L(\ovcG[q,m]_{(p,n)}, \un{\ov{\xi}}):=L(\ovcG[q,m]_{(p,n)}, \ov\xi_1) \otimes \cdots \otimes L(\ovcG[q,m]_{(p,n)}, \ov\xi_\ell) \right).
$$
for $\un{{\xi}}:=(\xi_1, \ldots, \xi_\ell) \in ({\bf h}_{(p, n)}^*)^{\times \ell}$ (resp., $\un{\ov{\xi}}:=(\ov\xi_1, \ldots, \ov\xi_\ell) \in (\ov{\bf h}[q, m]_{(p, n)}^*)^{\times \ell}$).
Here, e.g., $({\bf h}_{(p, n)}^*)^{\times \ell}$ denotes the Cartesian product of $\ell$ copies of ${\bf h}_{(p, n)}^*$, and recall that $L(\cG_{(p,n)}, \xi_i)$ (resp., $L(\ovcG[q,m]_{(p,n)}, \ov\xi_i)$)
is the irreducible highest weight $\cG_{(p,n)}$(resp., $\ovcG[q,m]_{(p,n)}$)-module with highest weight $\xi_i$ (resp., $\ov\xi_i$) for $i=1, \ldots, \ell$.

Similarly, we set
$$
L(\G_{(p,n)}, \un{\xi}):=L(\G_{(p,n)}, \xi_1) \otimes \cdots \otimes L(\G_{(p,n)}, \xi_\ell)
$$
$$
\left(\textrm{resp., }L(\SG[q,m]_{(p,n)}, \un{\ov{\xi}}):=L(\SG[q,m]_{(p,n)}, \ov\xi_1) \otimes \cdots \otimes L(\SG[q,m]_{(p,n)}, \ov\xi_\ell) \right).
$$
for $\un{{\xi}}:=(\xi_1, \ldots, \xi_\ell)  \in ({\fh}_{(p, n)}^*)^{\times \ell}$ (resp., $\un{\ov{\xi}}:=(\ov\xi_1, \ldots, \ov\xi_\ell) \in (\ov{\fh}[q, m]_{(p, n)}^*)^{\times \ell}$).
The above notations are also used later if $p$ or $n$ equals $\infty$.

Recall that $\cP_d$ denotes the set of all generalized partitions of depth $d$.
We define
\begin{align}
\w{Q}&=\setc*{\wla \in \w{\cP}^+(d)}{ \la \in \cP_d, \, d \in \N },\nonumber \\
Q&=\setc*{\la \in \cP^+(d)}{ \la \in  \cP_d, \, d \in \N  }, \label{Qqm} \\
\ov{Q}[q,m]&=\setc*{\ovla[q,m]\in \ov{\cP}[q,m]^+(d)}{ \la  \in \cP_d, \, d \in \N}.\nonumber
\end{align}
Here $\wla \in \w{\cP}^+(d)$, $\la \in \cP^+(d)$ and $\ovla[q,m] \in \ov{\cP}[q,m]^+(d)$ are defined as in \eqref{weight:wtIm}, \eqref{weight:m} and \eqref{weight:Im}.
It is easy to see that the bijections in \eqnref{cP} restrict to the following bijections:
\begin{equation}\label{Q-bij}
   Q \leftrightarrow  \w Q \leftrightarrow \ov{Q}[q,m].
\end{equation}
Thus every weight $\xi \in Q$ corresponds to a weight $\ov\xi\in \ov{Q}[q,m]$, and vice versa. By \cite[Theorem 5.7]{LZ06}, $L(\DG, \xi)$ is unitarizable if and only if  $\xi \in \w{Q}$. It follows that  $L(\G, \xi)$ (resp., $L(\SG[q,m], \xi)$) is unitarizable if and only if  $\xi \in {Q}$ (resp., $\ov{Q}[q,m]$).

For $(z_1, \ldots, z_\ell)\in {\bf X}_\ell$,
the
\emph{Gaudin algebra} of
 $\cG_{(p,n)}$
 is a commutative subalgebra of
 $U(\cG_{(p,n)})^{\otimes \ell}$
 generated by quadratic Gaudin Hamiltonians as well as higher Gaudin Hamiltonians for
 $\cG_{(p,n)}$ (cf. \cite{FFR, Ta}).
 It acts on
 $M_1 \otimes \cdots \otimes M_\ell$,
 where all $M_i$'s are $\cG_{(p,n)}$-modules.
 The following statement is well-known
 (see \cite[Section 6.1]{MTV09} and \cite[Main Corollary]{Ryb}).

\begin{prop} \label{Gaudin-alg}
Let $\un{{\xi}} \in ({\bf h}_{(p, n)}^*)^{\times \ell}$ be a sequence of dominant integral weights. For a generic $(z_1, \ldots, z_\ell)\in \C^\ell$, every element in the Gaudin algebra of $\cG_{(p,n)}$ is diagonalizable on the space $L(\cG_{(p,n)}, \un{\xi})^{\rm sing}$.
\end{prop}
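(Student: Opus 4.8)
The plan is to deduce \propref{Gaudin-alg} from the known simple-spectrum results for Gaudin algebras on tensor products of finite-dimensional irreducible modules over a reductive Lie algebra, as recorded in \cite{MTV09, Ryb}. First I would recall that the $\cG_{(p,n)}$ appearing here is isomorphic to $\gl(p+n)$, so its Gaudin algebra acts on $L(\cG_{(p,n)}, \un\xi)^{\rm sing}$, which is a finite-dimensional weight space since each $L(\cG_{(p,n)},\xi_i)$ is finite-dimensional when $\xi_i$ is dominant integral (here ``integral'' is meant in the sense that the corresponding weight lies in $\cP^+_{(p,n)}$ and the $L$'s are genuinely finite dimensional, as in the statement's hypothesis). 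The key input is that the Bethe subalgebra (Gaudin algebra) $\mc A(z_1,\ldots,z_\ell) \subseteq U(\gl(p+n))^{\otimes \ell}$ acts with simple (hence diagonalizable) joint spectrum on the singular space of a tensor product of finite-dimensional irreducibles for generic $(z_1,\ldots,z_\ell)$; this is precisely \cite[Main Corollary]{Ryb} (building on \cite{MTV09, FFRyb}).

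The main steps, in order, would be: (1) Reduce from $\gl$ to $\mf{sl}$: the Gaudin algebra of $\gl(p+n)$ is generated by the Gaudin algebra of $\mf{sl}(p+n)$ together with the (central, hence trivially diagonalizable) elements coming from the $\gl_1$ part, so diagonalizability of the whole Gaudin algebra is equivalent to diagonalizability of the $\mf{sl}(p+n)$ Gaudin algebra on the same space. (2) Identify $L(\cG_{(p,n)},\un\xi)^{\rm sing}$, the space of singular (i.e., highest-weight) vectors with respect to ${\bf b}_{(p,n)}$, with the multiplicity space $\mathrm{Hom}_{\cG_{(p,n)}}\big(L(\cG_{(p,n)},\eta),\, L(\cG_{(p,n)},\un\xi)\big)$ summed over dominant $\eta$, which is exactly the setup in which the cited simple-spectrum theorems are phrased. (3) Invoke \cite[Main Corollary]{Ryb} (see also \cite[Section 6.1]{MTV09}): there is a Zariski-open, nonempty (hence dense) subset of $\C^\ell$ on which the Gaudin/Bethe algebra acts on $L(\cG_{(p,n)},\un\xi)^{\rm sing}$ with simple joint spectrum, and a commuting family of operators with simple joint spectrum is simultaneously diagonalizable; in particular every individual element is diagonalizable. (4) Conclude that for generic $(z_1,\ldots,z_\ell)\in\C^\ell$ every element of the Gaudin algebra of $\cG_{(p,n)}$ is diagonalizable on $L(\cG_{(p,n)},\un\xi)^{\rm sing}$.

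The step I expect to be the genuine obstacle is Step (3): the simple-spectrum result of \cite{Ryb} is stated for tensor products of finite-dimensional irreducible modules over a semisimple Lie algebra, and one needs to be careful that the hypothesis ``$\un\xi$ a sequence of dominant integral weights'' in the present normalization really does give finite-dimensional irreducibles (this is where the parenthetical restriction to the genuinely finite-dimensional case matters, as opposed to the infinite-dimensional unitarizable modules considered elsewhere in the paper), and that the open condition on $(z_1,\ldots,z_\ell)$ is nonempty. Nonemptiness and openness are exactly the content of the cited theorems (the argument there uses that the Gaudin algebra has simple spectrum at a reference point, e.g., via the connection to Wronskians/the $\mf{gl}$-stratification, together with semicontinuity), so once the reduction to the classical setting is in place the conclusion is immediate. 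The remaining steps (1)--(2) are standard and require only the observation that taking the $\gl_1$-summand out changes the Hamiltonians by central elements, exactly as in the proof of \propref{prop:iota_corresp}.
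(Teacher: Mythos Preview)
Your proposal is correct and takes essentially the same approach as the paper: the paper does not give an independent proof of this proposition but simply records it as a well-known result, citing \cite[Section 6.1]{MTV09} and \cite[Main Corollary]{Ryb}. Your sketch fills in the routine reductions (passing from $\gl$ to $\mf{sl}$, identifying the singular space) needed to match the hypotheses of those references, which is more detail than the paper provides but amounts to the same argument.
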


\begin{cor}\label{cor:quad-diag-central}
Let $\un{{\xi}} \in Q^{\times \ell}$.
For a generic $(z_1, \ldots, z_\ell)\in \C^\ell$,
the quadratic Gaudin Hamiltonians
$\cH^1,\ldots,\cH^\ell$
are simultaneously diagonalizable on
$L(\G, \un{\xi})^{\rm sing}$.
\end{cor}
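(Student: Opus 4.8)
The plan is to reduce the statement about the infinite-rank Lie superalgebra $\DG$ to the classical statement for finite-rank $\gl(r+k)$ via a two-step procedure: first pass from $\DG$ down to a finite-rank $\G_{(r,k)}$ using the truncation functor, and then invoke the known diagonalizability of the Gaudin algebra of $\gl(r+k)$ from \propref{Gaudin-alg}. Throughout, the bijections of weights in \eqnref{cP} and \eqnref{Q-bij}, together with \propref{prop:iota_corresp}, will be used to move freely between the Gaudin Hamiltonians $\cH^i$ for the central extensions and the Hamiltonians $H^i$ for the Lie algebras themselves.

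The first step is to observe that for $\un{\xi} \in Q^{\times \ell}$ the modules $L(\G, \xi_i)$ are unitarizable (as noted right after \eqnref{Q-bij}), and by \thmref{thm:SD} each $L(\G, \xi_i) = T(L(\DG, \wxi_i))$, so $L(\G, \un\xi)^{\otimes} = T(L(\DG, \un{\wxi})^{\otimes})$ by \propref{tensor-functor}. Fix a weight $\mu$ of $L(\G,\un\xi)^{\otimes}$; I want to show $\cH^1,\ldots,\cH^\ell$ act semisimply on $(L(\G,\un\xi)^{\otimes})^{\rm sing}_\mu$ for generic $(z_1,\ldots,z_\ell)$. Since the singular weight space is finite-dimensional, $\mu$ involves only finitely many of the $\ep_i$; choose $r \le p = \infty$ and $k \le n = \infty$ large enough (depending on $\un\xi$ and $\mu$) so that $\mu \in \Xi_{(r,k)}$, so that each $\xi_i \in \cP^+_{(r,k)}$, and so that truncation is compatible with all relevant weight data. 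Then by \propref{trun}(i), $\mf{tr}^{(\infty,\infty)}_{(r,k)}(L(\G,\xi_i)) = L(\G_{(r,k)}, \xi_i)$, and since $\mf{tr}$ is a tensor functor (\lemref{trun-tensor}) it sends $L(\G,\un\xi)^{\otimes}$ to $L(\G_{(r,k)},\un\xi)^{\otimes}$. Crucially, $\mf{tr}^{(\infty,\infty)}_{(r,k)}$ restricts to an isomorphism on the singular weight space $(\cdot)^{\rm sing}_\mu$ when $\mu \in \Xi_{(r,k)}$ (this is immediate from the definition of truncation as a direct sum of weight spaces and \lemref{lem:finite_sum}), and it intertwines $\cH^i$ on the two sides — because by \lemref{corresp}(i) and \lemref{wO-O}(i), on vectors of weight $\mu \in \Xi$ the operator $\wcH^i = \cH^i$ is computed by the same Casimir tensor, which truncates correctly by \lemref{lem:finite_sum}. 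Hence the spectral picture of $(\cH^1,\ldots,\cH^\ell)$ on $(L(\G,\un\xi)^{\otimes})^{\rm sing}_\mu$ coincides with that of $(\cH^1_{(r,k)},\ldots,\cH^\ell_{(r,k)})$ on $(L(\G_{(r,k)},\un\xi)^{\otimes})^{\rm sing}_\mu$.

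The second step is the classical input. By \propref{prop:iota_corresp}(i), $\cH^i_{(r,k)}$ is diagonalizable on the singular space of $L(\G_{(r,k)},\un\xi)^{\otimes}$ if and only if the honest quadratic Gaudin Hamiltonian $H^i_{(r,k)}$ for $\cG_{(r,k)} \cong \gl(r+k)$ is, the two differing only by a scalar shift coming from $K\otimes K$ terms. Now $L(\G_{(r,k)}, \xi_i)$ viewed via $\iota$ as a $\gl(r+k)$-module is the finite-dimensional irreducible module with highest weight $\xi_i$ (a dominant integral weight for $\gl(r+k)$, by the definition of $\cP^+_{(r,k)} \cap Q$), so \propref{Gaudin-alg} applies: for generic $(z_1,\ldots,z_\ell)$ every element of the Gaudin algebra of $\gl(r+k)$ — in particular each $H^i_{(r,k)}$, hence each $\cH^i_{(r,k)}$ — is diagonalizable on $L(\G_{(r,k)},\un\xi)^{\otimes,\rm sing}$, and therefore on each singular weight space. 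Pulling this back through the truncation isomorphism gives diagonalizability of $\cH^i$ on $(L(\G,\un\xi)^{\otimes})^{\rm sing}_\mu$, and since this holds for every weight $\mu$ (each requiring its own $(r,k)$, but finitely many $\mu$ are relevant for any fixed weight component, and the genericity conditions can be intersected), we conclude diagonalizability on all of $L(\G,\un\xi)^{\rm sing}$.

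\textbf{Main obstacle.} The delicate point is the interplay between the two genericity statements: \propref{Gaudin-alg} gives, for \emph{each fixed} $(r,k)$, a Zariski-open dense set of good parameters $(z_1,\ldots,z_\ell)$, but the required $(r,k)$ depends on the weight $\mu$ whose singular space we examine. One must check that only finitely many $\mu$ contribute (true: the singular space of a fixed weight is finite-dimensional, and within a fixed $\ep$-degree there are finitely many weights of $L(\G,\un\xi)^{\otimes}$ to consider — or one restricts attention weight-by-weight as the statement is about a single singular weight space) and then intersect the finitely many open dense conditions. A second technical point to verify carefully is that the truncation functor genuinely restricts to an \emph{isomorphism} (not merely a surjection or injection) on $(\cdot)^{\rm sing}_\mu$ for $\mu \in \Xi_{(r,k)}$, and that it is $\cH^i$-equivariant there; both follow from \lemref{lem:finite_sum} and \lemref{wO-O}, but the bookkeeping with the ``$\le_{\ov\I}$'' orders and the $\delta_i$-shifts in the Casimir tensor \eqnref{Casimir} needs attention. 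Everything else is assembling results already in the excerpt.
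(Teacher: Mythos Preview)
Your proposal is correct and follows essentially the same route as the paper: fix a singular weight $\mu$, truncate to a finite-rank $\G_{(r,k)}$ with $(r,k)$ large enough that $\mu\in\Xi_{(r,k)}$ and each $\xi_i\in\cP^+_{(r,k)}$, then invoke \propref{prop:iota_corresp}(i) and \propref{Gaudin-alg}. The paper's proof is simply a terse version of your second and third paragraphs; your opening detour through $\DG$ via $T$ and super duality is harmless but unused, and can be dropped.
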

\begin{proof}
Given any weight space of $L(\G, \un{\xi})$, we may take $p$ and $n$ large enough that it is contained in $\mf{tr}^{(\infty, \infty)}_{(p,n)}(L(\G, \un{\xi}))$.
Also,
$\mf{tr}^{(\infty, \infty)}_{(p,n)}(L(\G,{\xi}_i))$, regarded as a $\cG_{(p,n)}$-module via the isomorphism $\iota$ defined in \eqref{iso-e}, is irreducible with dominant integral highest weight, for each $i=1, \ldots, \ell$. Therefore the statement is a consequence of \propref{prop:iota_corresp} and \propref{Gaudin-alg}.
\end{proof}

\begin{thm}\label{diag-central}
Let $\un{\ov{\xi}} \in  \ov{Q}[q,m]^{\times \ell}$, and let $\un{\xi} \in {Q}^{\times \ell}$ be the corresponding sequence of weights under the bijections \eqref{Q-bij}. Let $\ovmu \in \ov{Q}[q,m]$, and let $\mu \in Q$ be the corresponding weight.
For a generic $(z_1, \ldots, z_\ell)\in \C^\ell$, we have:

\begin{enumerate}[\normalfont(i)]

\item The quadratic Gaudin Hamiltonians
$\ovcH^1[q,m],\ldots,\ovcH^{\ell}[q,m]$
are simultaneously diagonalizable on $L(\SG[q,m], \un{\ov{\xi}})^{\rm sing}_{\ovmu}$.

\item Every joint eigenbasis of $\ovcH^1[q,m],\ldots,\ovcH^{\ell}[q,m]$ on
    $L(\SG[q,m], \un{\ov{\xi}})^{\rm sing}_{\ovmu}$
    can be obtained from some joint eigenbasis of
    $\cH^1,\ldots,\cH^{\ell}$
    on $L(\G, \un{\xi})^{\rm sing}_\mu$, and vice versa.

\item For each $i=1,\ldots,\ell$, the actions of $\ovcH^i[q,m]$ on $L(\SG[q,m], \un{\ov{\xi}})^{\rm sing}_{\ovmu}$ and $\cH^i$ on $L(\G, \un{\xi})^{\rm sing}_\mu$ have the same spectrum.

\end{enumerate}

\end{thm}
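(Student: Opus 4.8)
The plan is to transport everything, via super duality, to the diagonalizability statement \corref{cor:quad-diag-central} that is already available for the central extension $\G$. First I would fix the weight dictionary: let $\un{\w\xi}=(\w\xi_1,\ldots,\w\xi_\ell)\in\w Q^{\times\ell}$ and $\wmu\in\w Q$ be the weights corresponding to $\un\xi$ (equivalently to $\un{\ov\xi}$) and to $\mu$ (equivalently to $\ovmu$) under the bijections \eqref{cP}, \eqref{Q-bij}, and put $\w M^\otimes:=L(\DG,\w\xi_1)\otimes\cdots\otimes L(\DG,\w\xi_\ell)$. Since each $L(\DG,\w\xi_i)$ lies in $\w\OO$ (\propref{tensor-cat}) and $\w\OO$ is a tensor category (\propref{weight-Xi}), we have $\w M^\otimes\in\w\OO$; and because $T$ and $\ov T_{[q,m]}$ are tensor functors (\propref{tensor-functor}) carrying irreducibles to irreducibles (\thmref{thm:SD}), one gets $T(\w M^\otimes)=L(\G,\un\xi)$ and $\ov T_{[q,m]}(\w M^\otimes)=L(\SG[q,m],\un{\ov\xi})$. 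If $\ovmu$ is not a weight of $L(\SG[q,m],\un{\ov\xi})$ the statement is vacuous, so I would assume it is one, which then forces $\mu$ and $\wmu$ to be weights of the corresponding modules.

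Next I would invoke \propref{prop:isom} with $M=\w M^\otimes$ to obtain the linear isomorphisms $\ft^{\wmu}\colon(\w M^\otimes)^{\rm sing}_{\wmu}\to L(\G,\un\xi)^{\rm sing}_\mu$ and $\ov{\ft}^{\wmu}_{[q,m]}\colon(\w M^\otimes)^{\rm sing}_{\wmu}\to L(\SG[q,m],\un{\ov\xi})^{\rm sing}_{\ovmu}$, and \propref{prop:H=} to know that they conjugate $\wcH^i$ into $\cH^i$ and into $\ovcH^i[q,m]$ respectively, for every $i=1,\ldots,\ell$. The key object is then the composite $\Phi:=\ov{\ft}^{\wmu}_{[q,m]}\circ(\ft^{\wmu})^{-1}\colon L(\G,\un\xi)^{\rm sing}_\mu\to L(\SG[q,m],\un{\ov\xi})^{\rm sing}_{\ovmu}$, a linear isomorphism satisfying $\Phi\circ\cH^i=\ovcH^i[q,m]\circ\Phi$ for all $i$ simultaneously; hence for any scalars $c_1,\ldots,c_\ell$ it restricts to an isomorphism from the joint eigenspace $\{v:\cH^i v=c_i v\text{ for all }i\}$ onto $\{u:\ovcH^i[q,m] u=c_i u\text{ for all }i\}$.

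With $\Phi$ in hand the three parts follow quickly. For a generic $(z_1,\ldots,z_\ell)\in\C^\ell$, \corref{cor:quad-diag-central} makes $\cH^1,\ldots,\cH^\ell$ simultaneously diagonalizable on $L(\G,\un\xi)^{\rm sing}$, hence on the finite-dimensional weight space $L(\G,\un\xi)^{\rm sing}_\mu$ (these operators being $\G$-homomorphisms that preserve weight spaces and the singular space). Transporting a joint eigenbasis of that weight space through $\Phi$ yields a joint eigenbasis of $L(\SG[q,m],\un{\ov\xi})^{\rm sing}_{\ovmu}$ with the same eigenvalues, which gives (i) and, since the joint spectra then coincide, also (iii) upon restricting to a single index $i$. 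For (ii), since $\Phi$ and $\Phi^{-1}$ each carry joint eigenbases to joint eigenbases and compose to the identity, every joint eigenbasis of $\ovcH^1[q,m],\ldots,\ovcH^\ell[q,m]$ on $L(\SG[q,m],\un{\ov\xi})^{\rm sing}_{\ovmu}$ is the $\Phi$-image of some joint eigenbasis of $\cH^1,\ldots,\cH^\ell$ on $L(\G,\un\xi)^{\rm sing}_\mu$, and vice versa.

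As for the main obstacle: given super duality together with the comparison results \propref{prop:H=} and \thmref{thm:eigenvector}, there is no genuinely hard analytic or combinatorial step left, and the proof is really a matter of threading the weight correspondences and the intertwiners correctly. The one point I would take care over is the clean identification $\ov T_{[q,m]}(\w M^\otimes)=L(\SG[q,m],\un{\ov\xi})$ (and its analogue for $T$): this requires that $\ov T_{[q,m]}$ commute with the $\ell$-fold tensor product and send $L(\DG,\w\xi_i)$ to $L(\SG[q,m],\ov\xi_i)$, which rests on \propref{tensor-functor}, \thmref{thm:SD} and the bijections \eqref{Q-bij}, together with the compatibility of the $\Z_2$-gradings recorded before \propref{weight-Xi}.
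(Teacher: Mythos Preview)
Your proposal is correct and follows essentially the same approach as the paper, which simply cites \thmref{thm:eigenvector} and \corref{cor:quad-diag-central}. You have merely unpacked the content of \thmref{thm:eigenvector} (namely \propref{prop:isom} and \propref{prop:H=}) and made the identifications $T(\w M^\otimes)=L(\G,\un\xi)$ and $\ov T_{[q,m]}(\w M^\otimes)=L(\SG[q,m],\un{\ov\xi})$ explicit, but the logical route is identical.
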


\begin{proof}
This follows from \thmref{thm:eigenvector} and \corref{cor:quad-diag-central}.
\end{proof}

For $\un{\ov\xi} \in \ov\cQ[q,m]_{(p,n)}^{\times \ell}$, the module $L(\ovcG[q,m]_{(p,n)}, \un{\ov\xi})$ is a direct sum of irreducible highest weight modules as it is a tensor product of unitarizable highest weight modules. Thus to diagonalize the quadratic Gaudin Hamiltonian $\ov H^i[q,m]_{(p,n)}$ on the space $L(\ovcG[q,m]_{(p,n)}, \un{\ov \xi})$, it suffices to diagonalize $\ov H^i[q,m]_{(p,n)}$ on the subspace $L(\ovcG[q,m]_{(p,n)}, \un{\ov\xi})^{\rm sing}$ since $\ov H^i[q,m]_{(p,n)}$ commutes with the action of $\ovcG[q,m]_{(p,n)}$.

\begin{thm} \label{thm:quad-diag}
Let $\un{\ov\xi}=(\ov \xi_1, \ldots, \ov \xi_\ell)  \in \ov\cQ[q,m]_{(p,n)}^{\times \ell}$ and $\ov\mu\in \ov\cQ[q,m]_{(p,n)}$.
For a generic $(z_1, \ldots, z_\ell)\in \C^\ell$,
the quadratic Gaudin Hamiltonians
$\ov H^1[q,m]_{(p,n)},\ldots, \ov H^{\ell}[q,m]_{(p,n)}$
are simultaneously diagonalizable on the space
$L(\ovcG[q,m]_{(p,n)}, \un{\ov\xi})^{\rm sing}_{\ov\mu}$,
and each joint eigenbasis of the Hamiltonians can be obtained from some joint eigenbasis of the quadratic Gaudin Hamiltonians
$H^1_{(r, k)},\ldots,H^{\ell}_{(r, k)}$ on the corresponding singular weight space of the tensor product of the corresponding finite-dimensional irreducible $\cG_{(r, k)}$-modules
for $r$ and $k$ sufficiently large.
\end{thm}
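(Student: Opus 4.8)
The plan is to transport the problem through a chain of four reductions: from the finite-rank Lie superalgebra $\ovcG[q,m]_{(p,n)}$ to its central extension $\SG[q,m]_{(p,n)}$, then up to the infinite-rank $\SG[q,m]$ by inverting a truncation functor, across to the infinite-rank $\G$ via \thmref{diag-central}, and finally down to the finite-rank general linear Lie algebra $\cG_{(r,k)}$ as in the proof of \corref{cor:quad-diag-central}; at each stage the relevant family of quadratic Gaudin Hamiltonians is intertwined, up to scalar operators, by an explicit linear isomorphism of singular weight spaces, so simultaneous diagonalizability and the matching of joint eigenbases propagate along the chain. For the first reduction: being a tensor product of unitarizable modules, $L(\ovcG[q,m]_{(p,n)},\un{\ov\xi})$ is unitarizable, hence completely reducible with unitarizable irreducible summands, so by \propref{CLZ} the space $L(\ovcG[q,m]_{(p,n)},\un{\ov\xi})^{\rm sing}_{\ov\mu}$ vanishes unless $\ov\mu\in\ov\cQ[q,m]_{(p,n)}$---exactly the running hypothesis. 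Regarding this module as a $\SG[q,m]_{(p,n)}$-module via $\iota$ of \eqnref{iso-e} (with $K$ acting on the $i$-th tensor factor by the central charge determined by $\ov\xi_i$) leaves the space and its singular subspaces unchanged, and by \eqnref{Om-Om} together with \propref{prop:iota_corresp}(ii) the operator $\ov H^i[q,m]_{(p,n)}$ differs from $\ovcH^i[q,m]_{(p,n)}$ only by a scalar on the singular space.

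For the second reduction I would lift to infinite rank. The essential point is that each unitarizable highest weight $\eta_i$ for $\SG[q,m]_{(p,n)}$ arising above is the $\ov{\mf{tr}}[q,m]^{(\infty,\infty)}_{(p,n)}$-truncation of a unitarizable weight $\w\eta_i\in\ov{Q}[q,m]$ for $\SG[q,m]$. Indeed, writing $\ov\xi_i=\ovla-d\mathbbm{1}_{p|q}$ with $\la\in\cP_d(p+m|q+n)$ as in \eqnref{un-weight}, the inequalities defining $\cP_d(p+m|q+n)$ are precisely what makes the weight $\ovla[q,m]$ of \eqnref{weight:Im} (for the same $\la$, now of unbounded depth) have vanishing tail outside $\ov\I[q,m]_{(p,n)}$, and a short $\iota$-computation identifies its restriction with $\eta_i$; one then sets $\w\eta_i:=\ovla[q,m]\in\ov{Q}[q,m]$ and applies \propref{trun}(ii). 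Since truncation is a tensor functor (\lemref{trun-tensor}), $L(\SG[q,m]_{(p,n)},\un\eta)=\ov{\mf{tr}}[q,m]^{(\infty,\infty)}_{(p,n)}\big(L(\SG[q,m],\un{\w\eta})\big)$, and since $\eta$ (the weight corresponding to $\ov\mu$) lies in $\ov\Xi[q,m]_{(p,n)}$, \lemref{lem:finite_sum} shows that on the weight-$\eta$ part of the truncation the operators $\ovcH^i[q,m]_{(p,n)}$ and $\ovcH^i[q,m]$ act identically and that singularity for $\SG[q,m]_{(p,n)}$ and for $\SG[q,m]$ coincide. Hence $L(\SG[q,m]_{(p,n)},\un\eta)^{\rm sing}_{\eta}=L(\SG[q,m],\un{\w\eta})^{\rm sing}_{\eta}$ as spaces carrying the commuting operators $\ovcH^i[q,m]$.

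Now \thmref{diag-central}, applied to $\un{\w\eta}\in\ov{Q}[q,m]^{\times\ell}$ and $\eta\in\ov{Q}[q,m]$, gives that for generic $(z_1,\dots,z_\ell)$ the $\ovcH^i[q,m]$ are simultaneously diagonalizable on $L(\SG[q,m],\un{\w\eta})^{\rm sing}_{\eta}$ and that, via the isomorphism of \thmref{thm:eigenvector}, each joint eigenbasis corresponds to a joint eigenbasis of $\cH^1,\dots,\cH^\ell$ on $L(\G,\un\zeta)^{\rm sing}_{\zeta}$, where $\un\zeta\in Q^{\times\ell}$ and $\zeta\in Q$ correspond to $\un{\w\eta}$ and $\eta$ under \eqnref{Q-bij}. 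Finally, exactly as in the proof of \corref{cor:quad-diag-central}, I would choose $r$ and $k$ large enough that $\zeta\in\Xi_{(r,k)}$, so that $L(\G,\un\zeta)^{\rm sing}_{\zeta}=\mf{tr}^{(\infty,\infty)}_{(r,k)}(L(\G,\un\zeta))^{\rm sing}_{\zeta}=L(\cG_{(r,k)},\un\zeta)^{\rm sing}_{\zeta}$, each $L(\cG_{(r,k)},\zeta_i)$ (via $\iota$) being a finite-dimensional irreducible $\cG_{(r,k)}$-module with dominant integral highest weight; by \lemref{lem:finite_sum} and \propref{prop:iota_corresp}(i) the action of $\cH^i$ there agrees up to a scalar with that of $H^i_{(r,k)}$, which is simultaneously diagonalizable by \propref{Gaudin-alg}. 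Composing the linear isomorphisms of the four stages yields an isomorphism of $L(\ovcG[q,m]_{(p,n)},\un{\ov\xi})^{\rm sing}_{\ov\mu}$ onto $L(\cG_{(r,k)},\un\zeta)^{\rm sing}_{\zeta}$ intertwining the two Hamiltonian families up to scalar operators, whence the asserted diagonalizability and the correspondence of joint eigenbases.

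The step I expect to be the main obstacle is the lift to infinite rank: one must match carefully the generalized-partition parametrization of $\ov\cQ[q,m]_{(p,n)}$ in \eqnref{un-weight} with that of $\ov{Q}[q,m]$ in \eqnref{Qqm}, tracking the $d\mathbbm{1}_{p|q}$-versus-$d\La_0$ bookkeeping introduced by $\iota$, so as to be sure that the lifted weights $\w\eta_i$ genuinely lie in $\ov{Q}[q,m]$ and truncate back to the $\eta_i$ (and similarly that $\zeta$ and $\un\zeta$ on the $\G$-side behave as in \corref{cor:quad-diag-central}). Once this compatibility is secured, the remaining work is routine bookkeeping with the exact tensor functors, \lemref{lem:finite_sum}, and the explicit intertwiners of \propref{prop:isom} and \thmref{thm:eigenvector}.
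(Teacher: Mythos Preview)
Your proposal is correct and follows essentially the same route as the paper's proof: pass from $\ovcG[q,m]_{(p,n)}$ to $\SG[q,m]_{(p,n)}$ via $\iota$ and \propref{prop:iota_corresp}, identify the resulting highest weights with elements of $\ov{Q}[q,m]$ (the $d\mathbbm{1}_{p|q}$-versus-$d\La_0$ bookkeeping you flag is exactly the content of the paper's line $\ov{\la^i}[q,m]:=\ov{\la^i}+d_i\La_0\in\ov Q[q,m]$), and then invoke \thmref{diag-central}, which already packages the lift to infinite rank, super duality, and the descent to $\cG_{(r,k)}$ as in \corref{cor:quad-diag-central}. The paper's proof is simply a terser version of your four-stage chain, leaving the truncation compatibilities you spell out (via \lemref{lem:finite_sum}, \lemref{trun-tensor}, \propref{trun}) implicit.
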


\begin{proof}
For $i=1, \ldots, \ell$, $\ov \xi_i=\ov{\la^i}-d_i \mathbbm{1}_{p|q}$ for some generalized partition $\la^i$ of depth $d_i$ such that $(\la^i)^+_{m+1}\le n$ and $(\la^i)^-_{p+1}\le q$ (see Section \ref{sec:UM} for related notations).
Note that $L(\ovcG[q,m]_{(p,n)}, \un{\ov\xi})$, regarded as a $\SG[q,m]_{(p,n)}$-module via the isomorphism $\iota$ in \eqnref{iso-e}, is isomorphic to $L(\SG[q,m]_{(p,n)}, \ov{\la^1}[q, m])\otimes\cdots \otimes L(\SG[q,m]_{(p,n)}, \ov{\la^\ell}[q, m])$, where $\ov{\la^i}[q,m] :=\ov{\la^i}+ d_i \La_0  \in \ov{Q}[q,m]$ for $i=1, \ldots, \ell$.
By \propref{prop:iota_corresp} and \thmref{diag-central}, the statement follows.
\end{proof}

\thmref{thm:quad-diag} ensures the existence of a joint eigenbasis of the quadratic Gaudin Hamiltonians for $\ovcG[q,m]_{(p,n)}$ on any singular weight space of $L(\ovcG[q,m]_{(p,n)}, \un{\ov\xi})$ for $\un{\ov{\xi}} \in \ov\cQ[q,m]_{(p,n)}^{\times \ell}$.
However, writing an explicit formula for such an eigenbasis is, in general, rather involved and cumbersome. For an example of how the theorem works (in a simpler setting), see \cite[pp. 429--430]{ChL}.

For $p=q=0$, Mukhin, Tarasov, and Varchenko have shown that the Gaudin algebra of $\cG_{(0, n)}$ on the singular space of the tensor power of the natural module is generated by the quadratic Gaudin Hamiltonians (cf. \cite{MTV10}). This indicates that we can say more about the actions of the quadratic Gaudin Hamiltonians for $\ovcG[0,m]_{(0,n)}$ on the corresponding singular space. We will discuss this in Section \ref{sec:cubic} as the settings there can give a simpler treatment.

\subsection{Super Knizhnik-Zamolodchikov equations for $\cG_{(p,n)}$ and $\ovcG[q,m]_{(p,n)}$}\label{sec:SKZ}

 We fix $m, p, q \in \Zp$ and $n \in \N$.
 Let $\mth_{(p,n)}$
(resp., $\msh[q,m]_{(p,n)}$)
and $\mtb_{(p,n)}$
(resp., $\msb[q,m]_{(p,n)}$)
 denote the Cartan subalgebra
 and the Borel subalgebra of
$\cG_{(p,n)}\oplus \C K$
 (resp., $\ovcG[q,m]_{(p,n)}\oplus \C K$)
 associated to the Dynkin diagram in Section \ref{Dynkin}, respectively.
Let
$\mtl_{(p,n)}$
(resp., $\msl[q,m]_{(p,n)}$)
be the standard Levi subalgebra of
$\cG_{(p,n)}\oplus \C K$
(resp., $\ovcG[q,m]_{(p,n)}\oplus \C K$)
associated to
$Y_{(p,n)}$ (resp., $\ov Y[q,m]_{(p,n)}$) defined in \eqnref{Levi},
and let
$\mtp_{(p,n)}=\mtl_{(p,n)}+\mtb_{(p,n)}$
(resp., $\msp[q,m]_{(p,n)}=\msl[q,m]_{(p,n)}+\msb[q,m]_{(p,n)}$)
 be the corresponding parabolic subalgebra.

For $\mu \in \mth_{(p,n)}^*$ (resp., $\msh[q,m]_{(p,n)}^*$),
let
$L(\mtl_{(p,n)}, \mu)$ (resp., $ L(\msl[q,m]_{(p,n)}, \mu)$)
be the irreducible highest weight module over
$\mtl_{(p,n)}$ (resp., $\msl[q,m]_{(p,n)}$) with highest weight $\mu$.
 The parabolic Verma $\cG_{(p,n)}\oplus \C K$-module is defined to be
 $\mathring{{\Delta}}_{(p,n)}(\mu)
 :=\mbox{Ind}^{\cG_{(p,n)}\oplus \C K}_{\mtp_{(p,n)}} L(\mtl_{(p,n)}, \mu)$.
The parabolic Verma $\ovcG[q,m]_{(p,n)}\oplus \C K$-module $\mathring{\ov{\Delta}}[q,m]_{(p,n)}(\mu)$ is defined similarly.
 The unique irreducible quotient
$\cG_{(p,n)}\oplus \C K$(resp., $\ovcG[q,m]_{(p,n)}\oplus \C K$)-module of
$\mathring{\Delta}_{(p,n)}(\mu)$ (resp., $\mathring{\ov{\Delta}}[q,m]_{(p,n)}(\mu)$)
 is denoted by
$\mathring{L}_{(p,n)}(\mu)$ (resp., $\mathring{\overline L}[q,m]_{(p,n)}(\mu)$).

Since the Cartan subalgebras
$\mth_{(p,n)}$ and ${\h}_{(p,n)}$
(resp., $\msh[q,m]_{(p,n)}$ and $\ov{\h}[q,m]_{(p,n)}$)
coincide,
we may identify the dual space of
$\mth_{(p,n)}$
(resp., $\msh[q,m]_{(p,n)}$)
with that of
${\h}_{(p,n)}$ (resp., $\ov{\h}[q,m]_{(p,n)}$).
Associated to partitions
$\la^+=(\la^+_1,\la^+_2,\ldots)$ and $\la^-=(\la^-_1,\la^-_2,\ldots)$
as well as
$d \in \C$, we define
\begin{equation}\label{weight:Im0}
 \mathring\la := -\sum_{r=1}^p \big( (\la^-)'_r-d \big) \ep_{-r+\hf} +\sum_{i=1}^n (\la^+)'_i \ep_{i-\hf}+d \La_0\in \mth_{(p,n)}^{*}
\end{equation}
for $(\la^+)'_{n+1}=(\la^-)'_{p+1}=0$, and
\begin{align}\label{weight:ovIm0}
\mathring{\ov\la}[q,m] :=&-\sum_{r=1}^p \big(\langle \la^{-}_{r}-q \rangle+d \big) \ep_{-r}- \sum_{s=1}^{q} ((\la^{-})^\prime_{s}-d) \ep_{-s+\hf}  \\  \nonumber
&\hspace{5mm}+\sum_{i=1}^{m} \la^+_{i}\ep_{i}+ \sum_{j=1}^n \langle (\la^+)'_{j}-m \rangle\ep_{j-\hf}+ d\La_0 \in \msh[q,m]_{(p,n)}^*
\end{align}
for $(\la^+)'_{n+1}\leq m$ and $\la^{-}_{p+1}\leq q$.

Let $\mathring{P}_{(p,n)}^{\raisebox{-10pt}{\scriptsize{+}}}(d) \subseteq\mth_{(p,n)}^*$ and
$\mathring{\ov{P}}[q,m]_{(p,n)}^{\raisebox{-5pt}{\scriptsize{+}}}(d)  \subseteq\msh[q,m]_{(p,n)}^*$
 denote the sets of all weights of the forms
\eqnref{weight:Im0} and \eqnref{weight:ovIm0}, respectively.
Let
$$\mathring{{P}}_{(p,n)}^{\raisebox{-10pt}{\scriptsize{+}}}:=
 \bigcup_{d\in \C}\mathring{{P}}_{(p,n)}^{\raisebox{-10pt}{\scriptsize{+}}}(d)
 \ \ \ \ \ \  \ \
 \mbox{and} \ \ \ \ \ \ \  \ \mathring{\ov{P}}[q,m]^{\raisebox{-5pt}{\scriptsize{+}}}_{(p,n)}:=
 \bigcup_{d\in \C}\mathring{\ov{P}}[q,m]^{\raisebox{-5pt}{\scriptsize{+}}}_{(p,n)}(d).$$

 Let $\mathring\OO_{(p,n)}(d)$ (resp., $\mathring{\ov\OO}[q,m]_{(p,n)}(d)$)
 be the category of
$\cG_{(p,n)}\oplus \C K$(resp., $\ovcG[q,m]_{(p,n)}\oplus \C K$)-modules $M$
 such that $M$ is a semisimple $\mth_{(p,n)}$(resp., $\msh[q,m]_{(p,n)}$)-module
 with finite-dimensional weight subspaces
 $M_{\gamma}$, for $\gamma \in \mth_{(p,n)}^*$ (resp., $\msh[q,m]_{(p,n)}^*$),
 satisfying
   \begin{enumerate}[\normalfont(i)]
     \item[ (i)] $M$ decomposes over $\mtl_{(p,n)}$
         (resp., $\msl[q,m]_{(p,n)}$)
     as a direct sum of irreducible modules
 $L(\mtl_{(p,n)}, \mu)$
     (resp., $L(\msl[q,m]_{(p,n), \mu)}$)
     for
  $\mu \in \mathring{{P}}_{(p,n)}^{\raisebox{-10pt}{\scriptsize{+}}}(d)$
   (resp.,  $\mathring{\ov{P}}[q,m]_{(p,n)}^{\raisebox{-5pt}{\scriptsize{+}}}(d)$). \item [(ii)]
 There exist finitely many weights
  $\la^1,\ldots,\la^k$ in the set $\mathring{{P}}_{(p,n)}^{\raisebox{-10pt}{\scriptsize{+}}}(d)$
     (resp., $\mathring{\ov{P}}[q,m]^{\raisebox{-5pt}{\scriptsize{+}}}_{(p,n)}(d)$)
      (depending on $M$) such that if $\gamma$ is a weight of $M$,
      then $\la^i-\gamma$ is a linear combination of simple roots with coefficients in $\Zp$ for some $i$.
   \end{enumerate}

Also, we define $\mathring{{\OO}}_{(p,n)}:=\bigoplus_{d\in \C}\mathring{\OO}_{(p,n)}(d)$ and
$\mathring{\ov\OO}[q,m]_{(p,n)}:=\bigoplus_{d\in \C}\mathring{\ov\OO}[q,m]_{(p,n)}(d)$.
The morphisms in the categories are even homomorphisms of modules.

For $d \in \C$, let ${\OO}_{(p,n)}(d)$ be the full subcategory of ${\OO}_{(p,n)}$ whose objects are those $M \in {\OO}_{(p,n)}$ such that $Kv=dv$ for all $v \in M$.
The category $\ov\OO[q,m]_{(p,n)}(d)$ is defined in the same way. We have ${{\OO}}_{(p,n)}=\bigoplus_{d\in \C}{\OO}_{(p,n)}(d)$ and ${\ov\OO}[q,m]_{(p,n)}=\bigoplus_{d\in \C}{\ov\OO}[q,m]_{(p,n)}(d)$.
Forgetting $\Z_2$-gradations,
we see that there exists an isomorphism
$\Psi: \mathring{\OO}_{(p,n)}(d) \to {\OO}_{(p,n)}(d)$
\big(resp., $\Psi:\mathring{\ov\OO}[q,m]_{(p,n)}(d) \to {\ov\OO}[q,m]_{(p,n)}(d)$\big)
of categories induced from the isomorphism $\iota$ defined in \eqnref{iso-e},
and hence
$\mathring{\OO}_{(p,n)}$ and ${\OO}_{(p,n)}$
(resp., $\mathring{\ov\OO}[q,m]_{(p,n)}$ and ${\ov\OO}[q,m]_{(p,n)}$)
are isomorphic as tensor categories.
For each $M\in\mathring{\OO}_{(p,n)}(d)$ (resp., $\mathring{\ov\OO}[q,m]_{(p,n)}(d)$),
we have $\Psi(M)=M$. Thus $M$ carries a $\Z_2$-gradation inherited from the $\Z_2$-gradation on $\Psi(M)$.

  For $\mu \in \mathring{{P}}_{(p,n)}^{\raisebox{-10pt}{\scriptsize{+}}}(d)$
     (resp., $\mathring{\ov{P}}[q,m]^{\raisebox{-5pt}{\scriptsize{+}}}_{(p,n)}(d)$),
   the parabolic Verma module $\mathring{\Delta}_{(p,n)}(\mu)$ (resp., $\mathring{\ov{\Delta}}[q,m]_{(p,n)}(\mu)$) as well as the
 irreducible module
 $\mathring{L}_{(p,n)}(\mu)$ (resp., $\mathring{\overline L}[q,m]_{(p,n)}(\mu)$) lie in $\mathring{\OO}_{(p,n)}$ (resp., $\mathring{\ov\OO}[q,m]_{(p,n)}$).

 In this subsection, we use the symbol $M^{\otimes}\in\mathring{\OO}_{(p,n)}$ (resp., $\mathring{\ov\OO}[q,m]_{(p,n)}$) to mean that
 $$
 {M}^{\otimes}:= M_1\otimes \cdots \otimes  M_\ell
$$
 for some $M_1, \ldots, M_\ell \in \mathring{\OO}_{(p,n)}$ (resp., $\mathring{\ov\OO}[q,m]_{(p,n)}$).

Fix a nonzero $\kappa \in \C$ and ${\psi}(z_1,\ldots, z_\ell)\in \mc{D}(U, M^{\otimes}):=\bigoplus_\mu \mc{D}(U, (M^{\otimes})_\mu)$, where $\mu$ runs over all weights of
$M^{\otimes}\in\mathring{\OO}_{(p,n)}$ (resp., $\mathring{\ov\OO}[q,m]_{(p,n)}$).
For $i=1,\ldots,\ell$, the quadratic Gaudin Hamiltonian $H^i_{(p,n)}$ (resp., $\ov H^i[q,m]_{(p,n)}$) acts on $\mc{D}(U, M^{\otimes})$, and we may consider the \emph{(super) KZ equations} over ${\cG}_{(p,n)}$ (resp., ${\ovcG}[q,m]_{(p,n)}$): For $i=1,\ldots,\ell$,
\begin{equation}
\kappa\frac{\partial}{\partial {z_i}}{\psi}=H^i_{(p,n)}\psi \quad
\left(\mbox{resp., $\quad \displaystyle{\kappa\frac{\partial}{\partial {z_i}}{\psi}=\ov H^i[q,m]_{(p,n)} \psi}$}\right).
\end{equation}

We let
$\mathring{{\rm KZ}}_{(p,n)}(M^{\otimes})$ and $\mathring{\ov{\rm KZ}}[q,m]_{(p,n)}(M^{\otimes})$ (resp., $\mathring{\cS}_{(p,n)}(M^{\otimes})$ and $\mathring{\ov\cS}[q,m]_{(p,n)}(M^{\otimes})$) be the sets of all solutions (resp., singular solutions) of the (super) KZ equations in $\mc{D}(U, M^{\otimes})$, for ${M}^\otimes \in \mathring{\OO}_{(p,n)}$ (resp., $\mathring{\ov\OO}[q,m]_{(p,n)}$).
 We obtain the following result by a direct computation using \eqnref{Om-Om}.

\begin{prop} \label{ncetoce}
Let $M^{\otimes}=M_1\otimes \cdots \otimes  M_\ell$ and $\psi(z_1,\ldots,z_\ell)\in \mc{D}(U, M^{\otimes})$.
\begin{enumerate}[\normalfont(i)]

  \item Suppose for each $i=1,\ldots,\ell$, ${M}_i\in \mathring{\OO}_{(p,n)}(d_i)$, where $d_i\in \C$. Then
 $\psi \in \mathring{{\rm KZ}}_{(p,n)}(M^{\otimes})$ (resp., $\mathring{\cS}_{(p,n)}(M^{\otimes})$) if and only if
 $\prod \limits_{1\le i< j\le \ell}(z_i-z_j)^{\frac{p d_id_j}{\kappa}}\psi\in{{\rm KZ}}_{(p,n)}(\Psi(M^{\otimes}))$ (resp., ${\cS}_{(p,n)}(\Psi(M^{\otimes}))$).

  \item Suppose for each $i=1,\ldots,\ell$, ${M}_i\in \mathring{\ov\OO}[q,m]_{(p,n)}(d_i)$, where $d_i\in \C$. Then
 $\psi \in \mathring{\ov{\rm KZ}}[q,m]_{(p,n)}(M^{\otimes})$ (resp., $\mathring{\ov\cS}[q,m]_{(p,n)}(M^{\otimes})$) if and only if
 $\prod \limits_{1\le i< j\le \ell}(z_i-z_j)^{\frac{(q-p) d_id_j}{\kappa}}\psi\in{\ov{\rm KZ}}[q,m]_{(p,n)}(\Psi(M^{\otimes}))$ (resp., ${\ov\cS}[q,m]_{(p,n)}(\Psi(M^{\otimes}))$).

\end{enumerate}
\end{prop}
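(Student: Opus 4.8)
The strategy is to deduce the two equivalences from the comparison formulas \eqnref{Om-Om} together with a single scalar gauge transformation. First I would note that, since $\Psi$ forgets only the $\Z_2$-gradation and is a tensor functor, $\Psi(M^{\otimes})=\Psi(M_1)\otimes\cdots\otimes\Psi(M_\ell)$ lies in $\OO_{(p,n)}$ (resp.\ $\ov\OO[q,m]_{(p,n)}$), the underlying vector spaces of $M^{\otimes}$ and $\Psi(M^{\otimes})$ coincide, and hence $\mc{D}(U,M^{\otimes})=\mc{D}(U,\Psi(M^{\otimes}))$. Because $x\in\cG_{(p,n)}$ (resp.\ $\ovcG[q,m]_{(p,n)}$) acts on $M^{\otimes}$ exactly as $\iota(x)$ acts on $\Psi(M^{\otimes})$, the Casimir tensor $\mathring{\Omega}^{(ij)}_{(p,n)}$ (resp.\ $\mathring{\ov\Omega}[q,m]^{(ij)}_{(p,n)}$) acting on $M^{\otimes}$ equals $(\iota\otimes\iota)$ of the corresponding Casimir tensor, with the $ij$-components, acting on $\Psi(M^{\otimes})$; combining this with \eqnref{Om-Om} and with the fact that $K$ acts on $M_i$ by the scalar $d_i$, I obtain
\[
H^i_{(p,n)}=\cH^i_{(p,n)}-p\,c_i\cdot\mathrm{Id},\qquad
\ov H^i[q,m]_{(p,n)}=\ovcH^i[q,m]_{(p,n)}+(p-q)\,c_i\cdot\mathrm{Id},
\]
as operators on $\mc{D}(U,\Psi(M^{\otimes}))$, where $c_i:=\sum_{j\ne i}\tfrac{d_id_j}{z_i-z_j}$.

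Next I would introduce the scalar function $f$ equal to $\prod_{1\le a<b\le\ell}(z_a-z_b)^{p\,d_ad_b/\kappa}$ in case (i), resp.\ to $\prod_{1\le a<b\le\ell}(z_a-z_b)^{(q-p)\,d_ad_b/\kappa}$ in case (ii), and verify by a short logarithmic-derivative computation that $\kappa\,\partial_{z_i}\log f=p\,c_i$ (resp.\ $(q-p)\,c_i$) for every $i=1,\dots,\ell$. Because multiplication by the scalar $f$ commutes with the pointwise action of $\cG_{(p,n)}$ (resp.\ $\ovcG[q,m]_{(p,n)}$) on $\mc{D}(U,M^{\otimes})$ --- in particular with every $\cH^i_{(p,n)}$ (resp.\ $\ovcH^i[q,m]_{(p,n)}$) and with every $E_{i,j}$ with $i<j$ (resp.\ $i<_{\ov{\I}{[q, m]}}j$) --- applying the product rule to $f\psi$ shows at once that $\psi$ satisfies the super KZ system of the $H^i_{(p,n)}$ (resp.\ of the $\ov H^i[q,m]_{(p,n)}$) if and only if $f\psi$ satisfies the KZ system of the $\cH^i_{(p,n)}$ (resp.\ of the $\ovcH^i[q,m]_{(p,n)}$). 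Moreover $f$ is nowhere zero, so $E_{i,j}\psi=0$ for all relevant $i<j$ iff $E_{i,j}(f\psi)=0$ for all such $i<j$; hence the equivalence passes to the singular subspaces, giving the statements about $\mathring{\cS}_{(p,n)}$ and $\mathring{\ov\cS}[q,m]_{(p,n)}$. The map $\psi\mapsto f\psi$ is visibly linear, so both asserted equivalences follow.

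The computations involved are elementary, and I anticipate no real difficulty; the only point deserving a remark is the meaning of the factors $(z_a-z_b)^{c\,d_ad_b/\kappa}$ when $c\,d_ad_b/\kappa\notin\Z$. As is standard for KZ-type equations, I would fix once and for all a simply-connected open $U\subseteq{\bf X}_\ell$ and a branch of each such power on $U$; the choice is immaterial, since any two branches of $f$ differ by a nonvanishing constant and the argument above uses only that $f$ is holomorphic and nowhere vanishing on $U$. The one piece of genuine bookkeeping is keeping straight the opposite signs of the $-pK\otimes K$ and $(p-q)K\otimes K$ corrections in \eqnref{Om-Om}, which is exactly what produces the exponents $p\,d_id_j/\kappa$ and $(q-p)\,d_id_j/\kappa$ in the two cases.
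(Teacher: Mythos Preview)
Your proposal is correct and is exactly the ``direct computation using \eqnref{Om-Om}'' that the paper invokes: you translate the Hamiltonians $H^i_{(p,n)}$ and $\ov H^i[q,m]_{(p,n)}$ into $\cH^i_{(p,n)}$ and $\ovcH^i[q,m]_{(p,n)}$ plus the scalar shifts $-p\,c_i$ and $(p-q)\,c_i$, then absorb these shifts into the logarithmic derivative of the prefactor $f$. The sign bookkeeping and the remark on choosing a branch of $f$ on a simply connected $U$ are handled correctly.
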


\begin{thm} \label{iso-KZ} There is a linear isomorphism from the set of singular solutions of the (super) KZ equations of any fixed weight on any tensor product of modules in $\mathring{\ov\OO}[q,m]_{(p,n)}$ to the set of singular solutions of the KZ equations of the corresponding weight on the tensor product of the corresponding modules in $\mathring{\OO}{(r,k)}$ for $r$ and $k$ sufficiently large.
\end{thm}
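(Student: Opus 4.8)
The plan is to construct the map as a composition of isomorphisms established in the preceding sections, routing from the finite-rank super side $\mathring{\ov\OO}[q,m]_{(p,n)}$ up to the infinite-rank super side $\ov\OO[q,m]$, across super duality to the infinite-rank nonsuper side $\OO$, and then back down to $\mathring{\OO}_{(r,k)}$ for $r,k$ sufficiently large. Three devices are combined: the category isomorphisms $\Psi$ induced by $\iota$, together with \propref{ncetoce}, which pass between the non-centrally-extended and centrally-extended pictures at the cost of multiplying by an explicit nowhere-vanishing function $\prod_{1\le i<j\le\ell}(z_i-z_j)^{c_{ij}/\kappa}$; the truncation functors, together with \propref{tr-KZ}, which link the finite ranks $(p,n)$ and $(r,k)$ with infinite rank; and \propref{sing-sol}, which performs the two passages between the super and nonsuper sides, both at infinite rank.

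Fix a tensor product $M^\otimes=M_1\otimes\cdots\otimes M_\ell$ with $M_i\in\mathring{\ov\OO}[q,m]_{(p,n)}(d_i)$ and a weight $\mu$; one may assume $(M^\otimes)^{\rm sing}_\mu\neq 0$, since otherwise both sides of the claimed isomorphism vanish. First, via $\Psi$ and \propref{ncetoce}(ii), multiplication by $\prod_{i<j}(z_i-z_j)^{(q-p)d_id_j/\kappa}$ identifies $\mathring{\ov\cS}[q,m]_{(p,n)}\big((M^\otimes)_\mu\big)$ with $\ov\cS[q,m]_{(p,n)}\big((\Psi(M^\otimes))_\mu\big)$, where $\Psi(M^\otimes)=\Psi(M_1)\otimes\cdots\otimes\Psi(M_\ell)\in\ov\OO[q,m]_{(p,n)}$. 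Next I would lift to infinite rank: the truncation functor $\ov{\mf{tr}}[q,m]^{(\infty,\infty)}_{(p,n)}\colon\ov\OO[q,m]\to\ov\OO[q,m]_{(p,n)}$ is essentially surjective (discussed below), so one can choose $\w N_i\in\ov\OO[q,m]$ with $\ov{\mf{tr}}[q,m]^{(\infty,\infty)}_{(p,n)}(\w N_i)\cong\Psi(M_i)$ and put $\w N^\otimes=\w N_1\otimes\cdots\otimes\w N_\ell$, so that $\ov{\mf{tr}}[q,m]^{(\infty,\infty)}_{(p,n)}(\w N^\otimes)\cong\Psi(M^\otimes)$ by \lemref{trun-tensor}. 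Since $\mu\in\ov\Xi[q,m]_{(p,n)}$, \propref{tr-KZ}(ii) (which rests on \lemref{lem:finite_sum}), applied with $(\infty,\infty)$ and $(p,n)$ in the roles of $(p,n)$ and $(r,k)$, then identifies $\ov\cS[q,m]_{(p,n)}\big((\Psi(M^\otimes))_\mu\big)$ with $\ov\cS[q,m]\big((\w N^\otimes)_\mu\big)$.

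Next I would cross super duality and reverse the nonsuper half. By \thmref{thm:SD}, pick $\w P_i\in\w\OO$ with $\ov T_{[q,m]}(\w P_i)=\w N_i$, so that $\ov T_{[q,m]}(\w P^\otimes)=\w N^\otimes$ by \propref{tensor-functor}, where $\w P^\otimes=\w P_1\otimes\cdots\otimes\w P_\ell$. Let $\wmu\in\w\cP^+$ correspond to $\mu$ under \eqnref{cP} (it lies in $\w\cP^+$ because $\mu$ supports a nonzero singular vector) and let $\la$ be the associated $\OO$-weight. Then \propref{sing-sol}(ii) gives a linear isomorphism $\w\cS\big((\w P^\otimes)_{\wmu}\big)\xrightarrow{\sim}\ov\cS[q,m]\big((\w N^\otimes)_\mu\big)$, and \propref{sing-sol}(i) gives $\w\cS\big((\w P^\otimes)_{\wmu}\big)\xrightarrow{\sim}\cS\big((T(\w P^\otimes))_\la\big)$ with $T(\w P^\otimes)=T(\w P_1)\otimes\cdots\otimes T(\w P_\ell)\in\OO$. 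Choosing $r,k$ large enough that $\la\in\Xi_{(r,k)}$, \propref{tr-KZ}(i) identifies $\cS\big((T(\w P^\otimes))_\la\big)$ with $\cS_{(r,k)}\big(({\mf{tr}}^{(\infty,\infty)}_{(r,k)}(T(\w P^\otimes)))_\la\big)$, and ${\mf{tr}}^{(\infty,\infty)}_{(r,k)}(T(\w P^\otimes))$ is the tensor product of the $\cG_{(r,k)}$-modules ${\mf{tr}}^{(\infty,\infty)}_{(r,k)}(T(\w P_i))$ by \lemref{trun-tensor}; finally, \propref{ncetoce}(i) through $\Psi^{-1}$ identifies this with $\mathring{\cS}_{(r,k)}\big((N^\otimes)_\la\big)$, where $N^\otimes=N_1\otimes\cdots\otimes N_\ell$ and $N_i=\Psi^{-1}\big({\mf{tr}}^{(\infty,\infty)}_{(r,k)}(T(\w P_i))\big)\in\mathring{\OO}_{(r,k)}$. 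Composing this chain of linear isomorphisms yields the assertion; each $N_i$ depends only on $M_i$ (the auxiliary objects $\w N_i,\w P_i$ enter only up to isomorphism), and $\la$ is the weight that corresponds to $\mu$ in the sense of the statement.

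The hard part will be the lifting step, i.e.\ the essential surjectivity of $\ov{\mf{tr}}[q,m]^{(\infty,\infty)}_{(p,n)}$. \propref{trun} already gives this for parabolic Verma modules and irreducible modules; for a general $M\in\ov\OO[q,m]_{(p,n)}$ one lifts a projective presentation $P_1\to P_0\to M\to 0$, using that truncation carries a full set of projective generators of $\ov\OO[q,m]$ onto a full set of projective generators of $\ov\OO[q,m]_{(p,n)}$ and is surjective on the relevant ${\rm Hom}$-spaces, so that a lift $\w P_1\to\w P_0$ exists whose cokernel truncates to $M$ (truncation being exact). This is exactly what the truncation/super-duality formalism of \cite{CLW12} provides. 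Everything else is routine: the tensoriality of the functors involved (\lemref{trun-tensor}, \propref{tensor-functor}), the weight bookkeeping supplied by \lemref{lem:finite_sum} and \lemref{weight-decomposition}, and keeping track of the exponents $c_{ij}$ in the scalar factors. (When $p=\infty$ one is already at infinite rank on the super side, and the lifting step is vacuous.)
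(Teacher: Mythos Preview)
Your proposal is correct and follows essentially the same route as the paper's proof, which is simply the one-line citation of \propref{sing-sol}, \propref{tr-KZ}, and \propref{ncetoce}. You have spelled out the chain of isomorphisms in full detail and, in addition, correctly isolated the one ingredient the paper leaves implicit---the essential surjectivity of the truncation functor $\ov{\mf{tr}}[q,m]^{(\infty,\infty)}_{(p,n)}$ needed to lift the $M_i$ to infinite rank---supplying a standard projective-presentation argument that the formalism of \cite{CLW12} indeed supports.
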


\begin{proof}
This follows from \propref{sing-sol}, \propref{tr-KZ} and \propref{ncetoce}.
\end{proof}

\section{Cubic Gaudin Hamiltonians for $\gl(m|n)$} \label{sec:cubic}

In this section, we study the cubic Gaudin Hamiltonians for $\gl(m|n)$ on the tensor product of irreducible polynomial modules and their diagonalization. We also discuss the quadratic Gaudin Hamiltonians for $\gl(m|n)$ on the tensor power of the natural module.

\subsection{Settings} \label{sec:CGH}

Let us simplify some notations. Fix $m \in \Zp$ and $n \in \N\cup\{\infty\}$.
Our main object of study in this section is the general linear Lie (super)algebra $\ovcG{[0, m]}_{(0, n)}\cong \gl(m|n)$, and so we denote each symbol ${[0, m]}_{(0,n)}$ by $[m]_n$, e.g.,
$$
\ovcG{[m]}_{n}:=\ovcG{[0, m]}_{(0, n)}, \quad \ov \I{[m]}_n:= \ov \I{[0, m]}_{(0,n)}, \quad \ov \cP{[m]}^+_n:= \ov \cP{[0, m]}^+_{(0,n)}, \quad \textrm{etc.}
$$

Let $k, \ell \in \N$ with $\ell \ge 2$.
Let $u$ be a formal variable and $\partial_u=d/du$.
For $i=1, \ldots, \ell$ and $(z_1, \ldots, z_\ell)\in {\bf X}_\ell$. Consider the matrix $\cL(u):=[\ell_{rs}(u)]_{r, s \in \ov \I[m]_{n}}$, where
$$
\ell_{rs}(u)=\delta_{r,s}\partial_u -(-1)^{2r} \sum_{i=1}^\ell \frac{E_{r,s}^{(i)}}{u-z_i} \qquad \mbox{for $r, s  \in \ov \I[m]_{n}$.}
$$
The supertrace of $\cL(u)^k$ can be written in the form
$$
\text{Str}(\cL(u)^k)=\sum_{j=0}^k S_{kj}(u)\partial_u^{k-j}.
$$
Of particular interest is the series $S_{kk}(u)$ in $u$, whose coefficients are Gaudin Hamiltonians contained in the Gaudin algebra of $\ovcG[m]_n$; cf. \cite[Section 3.2]{MR} (see \cite{FFR, Ta} for the non-super version). The quadratic Gaudin Hamiltonians $\ov H^i[m]_n$'s come from $S_{22}(u)$. In fact,
$$
S_{22}(u)=\sum_{i=1}^\ell \left( \frac{2 \ov H^i[m]_n}{u-z_i} +\frac{\sum_{r,s \in  \ov \I[m]_{n}} (-1)^{2s} E_{r,s}^{(i)} E_{s,r}^{(i)} + \sum_{r \in  \ov \I[m]_{n}} E_r^{(i)}}{(u-z_i)^2} \right).
$$

Now we go one step further and study the diagonalization of $S_{33}(u)$ on the tensor product $\ov L^\otimes:=\ov L_1 \otimes \cdots \otimes \ov L_\ell$, where each $\ov L_j$ is an irreducible polynomial module over $\ovcG[m]_n$. We may express $S_{33}(u)$ as
$$
S_{33}(u)=-\sum_{i=1}^\ell \left(\frac{\cS^i_{1}}{u-z_i} +\frac{\cS^i_{2}}{(u-z_i)^2} +\frac{\cS^i_{3}}{(u-z_i)^3} \right)
$$
for some $\cS^i_{1}, \cS^i_{2}, \cS^i_{3} \in U(\ovcG[m]_n)^{\otimes \ell}$.
Since for $r,s,t  \in  \ov \I[m]_{n}$,
$$E_{t,r}E_{s,t}= (-1)^{4(r+t)(s+t)}(E_{s,t}E_{t,r}-E_{s,r}) + \delta_{r,s} E_{t},$$ and for $i \not=j$,
$$
\frac{1}{(u-z_i)(u-z_j)^2}=\frac{1}{(z_i-z_j)^2(u-z_i)}-\frac{1}{(z_i-z_j)^2(u-z_j)}-\frac{1}{(z_i-z_j)(u-z_j)^2},
$$
we calculate that
\begin{align*}
\cS^i_{1}&= 3 \ovcC^{i}[m]_{n},\\
\cS^i_{2}&= 3\ovcD^{i}[m]_{n} -  2 \sum_{r,s \in  \ov \I[m]_{n}} \sum_{\substack{j=1 \\ j \not=i}}^\ell  \frac{E_{r}^{(i)} E_{s}^{(j)}}{z_i-z_j} +\big(2(m-n)+3 \big) \ov H^i[m]_n, \\
\cS^i_{3}&= \sum_{r,s,t \in  \ov \I[m]_{n}} (-1)^{2(s+t)} E_{r,s}^{(i)} E_{s,t}^{(i)} E_{t,r}^{(i)} + 3 \sum_{r,s \in  \ov \I[m]_{n}} (-1)^{2s} E_{r,s}^{(i)} E_{s,r}^{(i)} + 2\sum_{r \in  \ov \I[m]_{n}} E_r^{(i)}.
\end{align*}
Here
\begin{align}
 \ovcC^{i}[m]_{n}&:=   \sum_{r,s,t \in  \ov \I[m]_{n}} (-1)^{2(s+t)(2(r+t)+1)}  \Bigg[ \sum_{\substack{j,k=1 \\ j \not= k, j \not=i, k \not=i}}^\ell   \frac{E_{r,s}^{(i)} E_{t,r}^{(j)}  E_{s,t}^{(k)}  }{(z_i-z_j)(z_i-z_k)} \label{cGH1}\\
& \hspace{5.5cm} +\sum_{\substack{j=1 \\ j \not=i}}^\ell  \frac{  \big(E_{r,s}^{(i)} E_{t,r}^{(j)}E_{s,t}^{(j)} -E_{r,s}^{(j)}   E_{t,r}^{(i)}  E_{s,t}^{(i)} \big)  }{ (z_i-z_j)^2} \Bigg], \nonumber \\
  \ovcD^{i}[m]_{n}&:= \sum_{r,s,t  \in  \ov \I[m]_{n}}  (-1)^{2(s+t)(2(r+t)+1)}   \sum_{\substack{j=1 \\ j \not=i}}^\ell  \frac{ E_{r,s}^{(j)} E_{t,r}^{(i)}E_{s,t}^{(i)}   }{z_i-z_j}. \label{cGH2}
\end{align}
It is clear that for $i=1, \ldots, \ell$,
each summation of
$\cS^i_{3}$ commutes with the
action of $\ovcG[m]_n$ on the $i$-th factor of the tensor product (see, e.g., \cite[Proposition 4 on p. 49]{Sch})
and
$$
\sum_{j=1, j\not=i}^\ell  \frac{\sum_{r,s \in  \ov \I[m]_{n}} E_{r}^{(i)} E_{s}^{(j)}}{z_i-z_j}=\sum_{j=1, j\not=i}^\ell  \frac{(\sum_{r\in  \ov \I[m]_{n}} E_{r}^{(i)})(\sum_{s \in  \ov \I[m]_{n}}E_{s}^{(j)})}{z_i-z_j},
$$
and hence they act on $\ov L^\otimes$ by fixed scalars. Also, we have seen earlier that $\ov H^i[m]_n$ is diagonalizable on $\ov L^\otimes$ for a generic $(z_1, \ldots, z_\ell)\in \C^\ell$.
Thus the diagonalization problem on $S_{33}(u)$ boils down to the diagonalization of $\ovcC^{i}[m]_{n}$'s and $\ovcD^{i}[m]_{n}$'s on $\ov L^\otimes$.

For $i=1, \ldots, \ell$, we call both $\ovcC^{i}[m]_{n}$ and $  \ovcD^{i}[m]_{n}$ \emph{cubic Gaudin Hamiltonians} for $\ovcG[m]_n$. The \emph{cubic Gaudin Hamiltonians} $\cC^{i}_{n}$ and $\cD^{i}_{n}$ (resp., $\wcC^{i}_\infty$ and $\wcD^{i}_\infty$) for $\cG_{(0, n)}$ (resp., $\wcG_{(0, \infty)}$) are defined in the same way as in \eqref{cGH1} and \eqref{cGH2}, respectively, by using $\I_{(0, n)}$ (resp., $\w \I_{(0, \infty)}$) in place of $ \ov \I[m]_{n}$.

Again, we drop the subscript $n$ if $n=\infty$ and denote ${(0, \infty)}$ by $(0)$. For instance, $\cG_\ze:=\cG_{(0, \infty)}$, $\OO_\ze:=\OO_{(0, \infty)}$, etc.

In the rest of this section, by ${M}^{\otimes}\in \w\OO_\ze$ (resp., $\OO_{(0,n)}$ and $\ov\OO[m]_{n}$), we mean that
$$
 {M}^{\otimes}:= M_1\otimes \cdots \otimes  M_\ell
$$
 for some $M_1, \ldots, M_\ell \in \w\OO_\ze$ (resp., $\OO_{(0,n)}$ and $\ov\OO[m]_{n}$).

\subsection{Isomorphisms between (generalized) eigenspaces} \label{sec:sing-eig}

In view of \remref{p=q=0}, we may identify the action of $\DG_\ze$ (resp., $\G_{(0,n)}$ and $\SG[m]_{n}$) with the action of $\wcG_\ze$ (resp., $\cG_{(0,n)}$ and $\ovcG[m]_{n}$) on $M \in \w\OO_\ze$ (resp., $\OO_{(0, n)}$ and $\ov \OO[m]_n$).

The cubic Gaudin Hamiltonians $\wcC^{i}$ (resp., $\cC^{i}$ and $\ovcC^{i}[m]$) and $\wcD^{i}$ (resp., $\cD^{i}$ and $\ovcD^{i}[m]$) are well-defined operators on ${M}^{\otimes} \in \w \OO_\ze$ (resp., $\OO_\ze$ and $\ov \OO[m]$). To see this, take $\wcC^{i}$ for example.
Suppose $v$ is a weight vector of weight $\mu$ in $M^\otimes \in \w \OO_\ze$. Then $\mu \in \w \Xi_{(0,k)}$ for some $k \in \N$. By \lemref{lem:finite_sum}, it suffices to examine the sum
$$
\sum_{\substack{r,s,t  \in  \w \I_{(0, \infty)} \\ s < r, \, t < r}} (-1)^{2(s+t)(2(r+t)+1)}  E_{r,s}^{(i)} E_{t,r}^{(j)}E_{s,t}^{(j)}v
$$
for $i, j=1, \ldots, \ell$ with $i \not=j$.
It is given by
$$
\sum_{\substack{r,s,t  \in   \w \I_{(0, \infty)} \\ s < r, \, t < r}} (-1)^{2(s+t)}  \big(E_{r,s}^{(i)} E_{s,t}^{(j)}E_{t,r}^{(j)}- E_{r,s}^{(i)} E_{s,r}^{(j)} \big)v,
$$
which is a finite sum as a consequence of \lemref{lem:finite_sum} again.

\begin{lem} \label{g-hom}
The action of the cubic Gaudin Hamiltonians $\wcC^{i}$ (resp., $\cC^{i}_{n}$ and $\ovcC^{i}[m]_{n}$) and $\wcD^{i}$ (resp., $\cD^{i}_{n}$ and $\ovcD^{i}[m]_{n}$) on ${M}^{\otimes} \in \w\OO_\ze$ (resp., $\OO_{(0, n)}$ and $\ov \OO[m]_n$) are $\wcG_{(0)}$(resp., $\cG_{(0, n)}$ and $\ovcG[m]_{n}$)-homomorphisms for each $i=1, \ldots, \ell$.
\end{lem}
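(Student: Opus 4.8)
The plan is to reduce the infinite‑rank statements to finite rank, and to settle the finite‑rank case by the standard construction of the Gaudin subalgebra out of the series $\text{Str}(\cL(u)^k)$. I will carry out the argument for $\ovcC^i[m]_n$ acting on ${M}^{\otimes}\in\ov\OO[m]_n$; the Hamiltonian $\ovcD^i[m]_n$, and likewise $\cC^i_n,\cD^i_n$ on ${M}^{\otimes}\in\OO_{(0,n)}$ and $\wcC^i,\wcD^i$ on ${M}^{\otimes}\in\w\OO_\ze$, are treated in exactly the same way, replacing $\ov\I[m]_n$ by the relevant index set and, in the non‑super cases, trivializing the parity signs (so that $\text{Str}$ becomes the ordinary trace).

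\emph{Step 1: the finite‑rank case $n<\infty$.} Here $\ovcG[m]_n\cong\gl(m|n)$ is finite‑dimensional, and by the computation of \secref{sec:CGH} the operator $S_{33}(u)$ is the $\partial_u^0$‑part of $\text{Str}(\cL(u)^3)$. A direct computation shows that, for each $x\in\ovcG[m]_n$, the bracket of $\sum_{c=1}^{\ell}x^{(c)}$ with an entry $\ell_{rs}(u)$ of $\cL(u)$ is a $\C$‑linear combination — with coefficients depending only on $x$ — of the entries $\ell_{as}(u)$ and $\ell_{rb}(u)$; in other words $\cL(u)$ transforms under the diagonal $\ovcG[m]_n$‑action like the matrix of generators in the defining representation, the built‑in signs $(-1)^{2r}$ being exactly what makes this compatible with the supertrace. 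Hence $\text{Str}(\cL(u)^k)$, viewed as a differential operator in $u$ with coefficients in $U(\ovcG[m]_n)^{\otimes\ell}$, commutes with $\sum_{c=1}^{\ell}x^{(c)}$ for every $k$ and every $x$ — this is the usual argument that the (super) Gaudin algebra lies in the diagonal invariants; cf.\ \cite{FFR,Ta,MR} — and therefore so does each of its coefficients, in particular $S_{33}(u)$ and hence each $\cS^i_1,\cS^i_2,\cS^i_3$ in its partial‑fraction expansion at $u=z_i$. Now $\cS^i_1=3\,\ovcC^i[m]_n$, while $\cS^i_2$ equals $3\,\ovcD^i[m]_n$ plus a fixed linear combination of the quadratic Gaudin Hamiltonian $\ov H^i[m]_n$ and of the operators $c^{(i)}c^{(j)}$ ($j\ne i$), where $c:=\sum_{r\in\ov\I[m]_n}E_r$ is the central (identity) element of $\ovcG[m]_n$. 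Since $\ov H^i[m]_n$ is already a $\ovcG[m]_n$‑homomorphism (as recorded earlier for the quadratic Gaudin Hamiltonians) and $c^{(i)}c^{(j)}$ commutes with the diagonal action, both $\ovcC^i[m]_n$ and $\ovcD^i[m]_n$ commute with $\sum_{c=1}^{\ell}x^{(c)}$ for all $x\in\ovcG[m]_n$, i.e.\ they are $\ovcG[m]_n$‑homomorphisms on every $\ovcG[m]_n$‑module, in particular on ${M}^{\otimes}\in\ov\OO[m]_n$.

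\emph{Step 2: reduction of the infinite‑rank cases.} Suppose now $n=\infty$ (the cases ${M}^{\otimes}\in\w\OO_\ze$ and ${M}^{\otimes}\in\OO_\ze$ are identical after trivializing or keeping signs appropriately). Fix $x\in\ovcG[m]$ and a weight vector $v\in{M}^{\otimes}$. As $x$ is a finite combination of matrix units and $v$ is a finite sum of pure tensors of weight vectors, all of whose weights lie in $\ov\Xi[m]_k$ for some $k\in\N$ by the definition of $\ov\OO[m]$, we may enlarge $k$ so that in addition $x\in\ovcG[m]_k$ and — using \lemref{lem:finite_sum} together with the local‑finiteness discussion preceding the present lemma — so that every matrix unit occurring nontrivially in $\ovcC^i[m]$ (resp.\ $\ovcD^i[m]$) applied to $v$ or to $xv$ already lies in $\ovcG[m]_k$. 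Then all of the vectors $v,\ xv,\ \ovcC^i[m]v,\ \ovcC^i[m]\,xv,\ x\,\ovcC^i[m]v$ have weights in $\ov\Xi[m]_k$, and on the finite‑dimensional subspace they span the operators $\ovcC^i[m]$ and $x$ act exactly as the corresponding operators for the finite‑rank subalgebra $\ovcG[m]_k\cong\gl(m|k)$ (the omitted terms annihilate these vectors by \lemref{lem:finite_sum}). Step~1 applied to $\ovcG[m]_k$ then gives $x\,\ovcC^i[m]v=\ovcC^i[m]\,xv$. Since weight vectors span ${M}^{\otimes}$ and $x\in\ovcG[m]$ was arbitrary, $\ovcC^i[m]$ — and, by the same argument, $\ovcD^i[m]$ — is a $\ovcG[m]$‑homomorphism.

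The main obstacle is Step~1, namely the $\ovcG[m]_n$‑invariance of the coefficients of $\text{Str}(\cL(u)^k)$; granting this standard fact, separating the genuinely cubic operators $\ovcC^i[m]_n,\ovcD^i[m]_n$ from the already‑invariant correction terms inside $\cS^i_1$ and $\cS^i_2$ is just bookkeeping, done in \secref{sec:CGH}. The only other point requiring care is the index‑range control in Step~2, but that is precisely the local‑finiteness estimate used to make $\ovcC^i[m],\ovcD^i[m]$ well defined in the first place, so no new difficulty arises there.
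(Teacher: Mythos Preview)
Your argument is correct and takes a genuinely different route from the paper. The paper's proof consists of the single sentence ``it is straightforward to verify that $\wcC^{i}\,\Delta^{(\ell)}(E_{r,s})=\Delta^{(\ell)}(E_{r,s})\,\wcC^{i}$ for all $r,s\in\hf\N$'' --- i.e.\ a bare direct computation of the commutator with each generator, handling the infinite sum as an operator identity on $M^\otimes$. You instead (a) invoke, for finite rank, the standard fact that all coefficients of $\text{Str}(\cL(u)^k)$ lie in the diagonal invariants of $U(\ovcG[m]_n)^{\otimes\ell}$ \cite{FFR,Ta,MR}, and then peel off $\ovcC^i[m]_n$ and $\ovcD^i[m]_n$ from $\cS^i_1,\cS^i_2$ using the explicit formulas of \secref{sec:CGH} together with the already-known invariance of $\ov H^i[m]_n$ and of the central element $c=\sum_rE_r$; and (b) for $n=\infty$, reduce to finite rank by the local-finiteness estimate of \lemref{lem:finite_sum}, exploiting that each cubic monomial $E_{r,s}^{(\cdot)}E_{t,r}^{(\cdot)}E_{s,t}^{(\cdot)}$ has total weight zero so the relevant weights stay inside $\ov\Xi[m]_k$. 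Both steps are sound; the vanishing in Step~2 for indices outside $\ov\I[m]_k$ follows exactly as in the proof of \lemref{cubic-corresp}. The trade-off is that the paper's approach is self-contained (no appeal to the literature on Gaudin algebras), while yours is conceptual, explains \emph{why} the invariance holds, and would extend without change to the higher Hamiltonians coming from $S_{kk}(u)$ for $k\ge 4$.
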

\begin{proof}
It is straightforward to verify that $\wcC^{i}  \Delta^{(\ell)} (E_{r,s}) =  \Delta^{(\ell)} (E_{r,s}) \wcC^{i} $  for all $r, s \in \hf \N$, where $\Delta^{(\ell)}: U(\wcG_{(0)}) \to U(\wcG_{(0)})^{\otimes \ell}$ is the $\ell$-fold coproduct on $U(\wcG_{(0)})$. The other cases can be treated similarly.
\end{proof}

We may apply the results in Section \ref{sec:quad-G} and Section \ref{sec:quad-cG} together with the techniques in Section \ref{SD} to relate $\wcC^{i}$, $\cC^{i}$ and $\ovcC^{i}[m]$ (resp., $\wcD^{i}$, $\cD^{i}$ and $\ovcD^{i}[m]$).

\begin{lem} \label{cubic-corresp}
Let $M^{\otimes} \in \w\OO_\ze$,
and let $v  \in M^{\otimes}$ be a weight vector of weight $\mu$.
\begin{enumerate}[\normalfont(i)]
  \item If $\mu \in \Xi_\ze$, then $\wcC^i (v)=\cC^i(v)$ and $\wcD^i (v)=\cD^i(v)$ for each $i=1, \ldots, \ell$.
  \item If $\mu  \in \ov{\Xi}[m]$, then $\wcC^i (v)=\ovcC^i[m](v)$ and $\wcD^i (v)=\ovcD^i[m](v)$ for each $i=1, \ldots, \ell$.
\end{enumerate}
\end{lem}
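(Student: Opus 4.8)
The plan is to follow the pattern of \lemref{wO-O} and \lemref{corresp}. Since $\cC^i$, $\ovcC^i[m]$ (and $\cD^i$, $\ovcD^i[m]$) are defined by literally the same formulas \eqref{cGH1}--\eqref{cGH2} as $\wcC^i$, $\wcD^i$ but with the index set $\w\I_{(0,\infty)}$ replaced by $\I_{(0,\infty)}$ or $\ov\I[m]$, the difference $\wcC^i-\cC^i$ (resp.\ $\wcC^i-\ovcC^i[m]$, and similarly for $\cD$) is exactly the part of the defining sum in which at least one of the running indices $r,s,t$ lies in the ``bad'' set $\w\I_{(0,\infty)}\setminus\I_{(0,\infty)}=\N$ (resp.\ $\w\I_{(0,\infty)}\setminus\ov\I[m]=\{m+1,m+2,\dots\}$). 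It is therefore enough to show that every such ``bad'' summand annihilates any weight vector $v$ of $M^{\otimes}$ whose weight $\mu$ lies in $\Xi_\ze$ (resp.\ $\ov\Xi[m]$); this then yields $\wcC^i(v)=\cC^i(v)$ and $\wcD^i(v)=\cD^i(v)$ in case (i) and the analogous identities in case (ii).

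To this end I would first reduce to $v=v_1\otimes\cdots\otimes v_\ell$ with each $v_a$ a weight vector of weight $\mu_a$; iterating \lemref{weight-decomposition2} and using that every $\mu_a\in\w\Xi_\ze$ shows that each $\mu_a$ again lies in $\Xi_\ze$ (resp.\ $\ov\Xi[m]$), so $\mu_a(E_t)=0$ for every bad index $t$. The one elementary input, playing the role of the vanishing exploited in \lemref{wO-O}, is this: if $w$ is a weight vector in a module in $\w\OO_\ze$ with weight $\gamma\in\Xi_\ze$ (resp.\ $\ov\Xi[m]$) and $t$ is a bad index, then $E_{s,t}w=0$ for every $s\in\w\I_{(0,\infty)}$ --- for $s=t$ because $E_t w=\gamma(E_t)w=0$, and for $s\ne t$ because the weight of $E_{s,t}w$ would have $E_t$-coordinate $\gamma(E_t)-1=-1<0$ and hence fall outside $\w\Xi_\ze$.

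It then remains to run a short case analysis over the summands of \eqref{cGH1}--\eqref{cGH2}. In the summand of \eqref{cGH1} with pairwise distinct $i,j,k$, the three factors $E_{r,s}^{(i)}$, $E_{t,r}^{(j)}$, $E_{s,t}^{(k)}$ act on three distinct tensor legs, and each of $r,s,t$ occurs once as the \emph{lower} index of one of these factors; hence a bad index among $r,s,t$ forces the corresponding factor to kill its leg, so the summand kills $v$. In the remaining summands of \eqref{cGH1}, in \eqref{cGH2}, and for $\wcD^i$, one factor is replaced by a product such as $E_{t,r}E_{s,t}$ (or $E_{r,s}E_{t,r}$) acting on a single leg $v_a$: if $t$ is bad, the rightmost factor $E_{s,t}$ already kills $v_a$ by the observation above; if $s$ is bad, it occurs as the lower index of the factor $E_{r,s}$ on a different leg, which kills that leg; and if the ``connecting'' index $r$ is bad with $r\notin\{s,t\}$, then $E_{s,t}v_a$ still has $E_r$-coordinate $\mu_a(E_r)=0$, so $E_{t,r}$ sends it to $E_r$-coordinate $-1$ and $E_{t,r}E_{s,t}v_a=0$ (the sub-cases $r=s$ and $r=t$ reduce to the previous two). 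The signs $(-1)^{2(s+t)(2(r+t)+1)}$ are immaterial for a vanishing statement, and the $\cD$-claims follow in exactly the same way. I expect the only genuine obstacle to be this last step --- the bookkeeping for products of two elementary operators acting on one tensor leg, and in particular getting the ``connecting index'' sub-case right; everything else transcribes the quadratic argument of \lemref{wO-O} directly.
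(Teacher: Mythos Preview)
Your proposal is correct and follows essentially the same approach as the paper's proof: reduce to a decomposable tensor, use \lemref{weight-decomposition2} to put each tensor factor's weight in $\ov\Xi[m]$ (resp.\ $\Xi_\ze$), and then kill every summand with a bad index via the observation that $E_{s,t}w=0$ whenever $t$ is bad. The only cosmetic difference is that for the product $E_{t,r}E_{s,t}$ on a single leg with the connecting index $r$ bad, the paper rewrites via the commutator as $\pm(E_{s,t}E_{t,r}-E_{s,r})v_j$ and kills both terms, whereas you track the $E_r$-coordinate of $E_{s,t}v_a$ directly; both arguments are equally valid.
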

\begin{proof}
Let us prove (ii).
Fix $i=1, \ldots, \ell$.
Suppose $\mu  \in \ov{\Xi}[m]$. We may assume that $v=v_1 \otimes \cdots \otimes v_\ell$, where $v_j \in M_j$ is a weight vector of weight $\mu_j$ for $j=1, \ldots, \ell$, and $\sum_{j=1}^\ell \mu_j=\mu$.
For $j=1, \ldots, \ell$, $\mu_j \in \ov\Xi[m]$ by \lemref{weight-decomposition2}, and for $r \in \hf \N \backslash  \ov\I[m]$, we have $\mu_j (E_r)=0$ and hence $E_rv_j=0$.

Let $\ov\J_k [m]$ (resp., $\w \J_k$) be the Cartesian product of $k$ copies of $\ov{\I}{[m]}$ (resp., $\hf \N$) for $k=2,3$.
Suppose $(r, s, t )\in \w \J_3$ with $r \notin \ov\I[m]$ and $j=1, \ldots, \ell$. We see that $E_{t,r}v_j=0$, for otherwise the weight of $E_{t,r}v_j$ would lie outside $\w\Xi_\ze$.
Thus for all $(r,s,t) \in \w \J_3 \backslash \ov \J_3 [m]$ and $j,k=1, \ldots, \ell$ such that $i$, $j$ and $k$ are pairwise distinct,
$$
E_{r,s}^{(i)} E_{t,r}^{(j)}  E_{s,t}^{(k)}  v=0.
$$
Moreover, for $(r, s, t )\in \w \J_3$ with $r \notin \ov\I[m]$ and $r \not=s$,
$$
E_{t,r} E_{s,t} v_j=(-1)^{4(r+t)(s+t)} (E_{s,t}E_{t,r}-E_{s,r})v_j= 0.
$$
It follows that for all $(r,s,t) \in \w \J_3 \backslash \ov \J_3 [m]$,
$$E_{r,s}^{(i)} E_{t,r}^{(j)}E_{s,t}^{(j)} v=0 \quad \mbox{and} \quad E_{r,s}^{(j)}   E_{t,r}^{(i)}  E_{s,t}^{(i)} v=0.$$
This proves (ii). The proof of (i) is similar.
\end{proof}

The following proposition is an analogue of \propref{prop:isom} after specialization to $p=q=0$ and can be proved in the same way by \remref{rem:SD}. From now on, we denote $\ovmu[m]:=\ovmu[0,m]$ for $\wmu \in \w{\cP}_\ze^+$.

 \begin{prop} \label{prop:isom_0}
Let $M \in \w\OO_\ze$, and let $\wmu \in \w{\cP}_\ze^+$ be a weight of $M$.

\begin{enumerate}[\normalfont(i)]
\item There exists $A_\mu \in U (\tilde{\mf l}_\ze)$ such that the map
${\ft}^{\wmu}_\ze: M_{\wmu}^{\rm sing}  \to T_\ze (M)_{\mu}^{\rm sing}$,
defined by ${\ft}^{\wmu}_\ze(v) = A_\mu v$ for $v \in M_{\wmu}^{\rm sing}$,
is a linear isomorphism.
Moreover, there exists $B_\mu\in U (\tilde{\mf l}_\ze)$
such that the inverse of ${\ft}^{\wmu}_\ze$ is given by
$\big({\ft}^{\wmu}_\ze \big)^{-1}(w)=B_\mu w$ for $w \in T_\ze(M)_{\mu}^{\rm sing}$.

\item There exists $\bar{A}_\mu \in U (\tilde{\mf l}_\ze)$ such that the map
$\ov{\ft}^{\wmu}_{[m]} : M_{\wmu}^{\rm sing}  \to \ov{T}_{[m]}(M)_{\ovmu[m]}^{\rm sing}$,
defined by $\ov{\ft}^{\wmu}_{[m]} (v) = \bar{A}_\mu v$ for $v \in M_{\wmu}^{\rm sing}$,
is a linear isomorphism.
Moreover, there exists $\ov B_\mu\in U (\tilde{\mf l}_\ze)$
such that the inverse of $\ov{\ft}^{\wmu}_{[m]}$ is given by
$\big(\ov{\ft}^{\wmu}_{[m]}\big)^{-1}(w)=\ov B_{\mu} w$
for $w \in \ov{T}_{[m]}(M)_{\ovmu[m]}^{\rm sing}$.
\end{enumerate}
\end{prop}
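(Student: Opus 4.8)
The plan is to reduce \propref{prop:isom_0} to the already-established \propref{prop:isom} by invoking the special structure of the $p=q=0$ case recorded in \remref{rem:SD}. Recall that \propref{prop:isom} was obtained ``by the same argument as in the proof of \cite[Proposition 4.5]{ChL},'' and that \remref{rem:SD} asserts precisely that \propref{exact-functor}, \propref{tensor-functor} and \thmref{thm:SD} remain valid for the functors $T_\ze$ and $\ov T_{[m]}$. The Levi subalgebra $\w{\mf l}_\ze$ is the $p=q=0$ instance of $\w{\mf l}$ (built from $\w Y$ restricted to the index set $\hf\N$), and by \remref{p=q=0} the Lie bracket on $\DG_\ze$ agrees with that on $\wcG_\ze$, so $U(\w{\mf l}_\ze)$ is just $U(\w{\mf l})$ restricted to the relevant indices. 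Hence every ingredient used to construct $A_\mu, B_\mu, \bar A_\mu, \ov B_\mu$ in \propref{prop:isom} is available verbatim here.

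First I would recall the construction underlying \propref{prop:isom}: for $\wmu\in\w\cP^+$, one writes down an explicit element $A_\mu\in U(\w{\mf l})$ (a product of root vectors, governed by the combinatorics of how $\wmu$ relabels under the bijection \eqnref{cP}) so that right-multiplication by $A_\mu$ carries the $\w{\mf l}$-highest-weight vector of each $L(\w{\mf l},\mu)$-summand of $M$ to the $\mf l$-highest-weight vector of the corresponding $L(\mf l,\la)$-summand of $T(M)$; the map on singular vectors is then an isomorphism because $T$ is an equivalence of categories (\thmref{thm:SD}) and singular vectors of weight $\wmu$ in $M$ correspond bijectively to $\w{\mf l}$-highest-weight vectors of the $L(\w{\mf l},\wmu)$-isotypic component. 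The inverse is given by another explicit element $B_\mu\in U(\w{\mf l})$. Second, I would observe that all of this goes through with $(\infty,\infty)$ replaced by $(0,\infty)$ throughout: the category $\w\OO_\ze$, the Levi $\w{\mf l}_\ze$, the functor $T_\ze$, and the weight bijection $\w\cP_\ze^+\leftrightarrow\cP_\ze^+$ all have the same formal properties, and \remref{rem:SD} guarantees $T_\ze$ is an equivalence of tensor categories sending parabolic Verma modules and irreducibles to the corresponding ones. Thus setting $A_\mu,B_\mu\in U(\w{\mf l}_\ze)$ to be the $p=q=0$ specializations of the elements from \propref{prop:isom} proves part (i). For part (ii), the identical argument with $\ov T_{[m]}$ in place of $T_\ze$, using the $p=0$ case of the weight map $\w\la\mapsto\ov\la[0,m]=\ovla[m]$ and the validity of \thmref{thm:SD} for $\ov T_{[m]}$ (again by \remref{rem:SD}), yields $\bar A_\mu,\ov B_\mu\in U(\w{\mf l}_\ze)$ with the stated properties.

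In writing this up I would simply state that the proof is entirely parallel to that of \propref{prop:isom}, with \remref{rem:SD} supplying the needed super duality statements, and that one takes $A_\mu,B_\mu,\bar A_\mu,\ov B_\mu$ to be the evident restrictions of the operators appearing there; no genuinely new computation is required. The only point deserving a sentence of care is the identification of $U(\w{\mf l}_\ze)$ inside $U(\DG_\ze)$ with the corresponding subalgebra of $U(\wcG_\ze)$ via \remref{p=q=0}, so that the operators $A_\mu$ etc.\ act as honest elements of $U(\w{\mf l}_\ze)$ and the formulas for ${\ft}^{\wmu}_\ze$ and its inverse make sense as stated.

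I expect the main (and essentially only) obstacle is bookkeeping: one must be sure that the explicit combinatorial recipe for $A_\mu$ from \cite[Proposition 4.5]{ChL} / \propref{prop:isom}, which depends on the truncation indices, specializes correctly when $p=q=0$ — in particular that the ``$\langle\cdot\rangle$'' truncations in \eqnref{weight:m} and \eqnref{weight:Im} degenerate to the expected partitions so that the targets $T_\ze(M)_\mu^{\rm sing}$ and $\ov T_{[m]}(M)_{\ovmu[m]}^{\rm sing}$ are what one wants. Since $p=q=0$ makes the negative-index contributions in \eqnref{weight:m} and \eqnref{weight:Im} vanish, this degeneration is clean, and there is no real analytic or structural difficulty — the content is wholly inherited from the general case.
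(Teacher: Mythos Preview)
Your proposal is correct and matches the paper's own treatment: the paper does not give a detailed proof of \propref{prop:isom_0} but simply states (immediately preceding the proposition) that it ``is an analogue of \propref{prop:isom} after specialization to $p=q=0$ and can be proved in the same way by \remref{rem:SD}.'' Your write-up is a faithful expansion of exactly this sentence, and the extra care you take with \remref{p=q=0} and the degeneration of the negative-index terms is appropriate bookkeeping rather than a deviation in approach.
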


\begin{prop} \label{prop:C=}
For $\w{M}^{\otimes}:=\w M_1\otimes \cdots \otimes \w M_\ell \in \w\OO_\ze$, let  ${M}^{\otimes}= T_\ze( \w{M}^{\otimes})$ and $ \ov {M}^{\otimes}=\ov T_{[m]}( \w{M}^{\otimes})$. For $\wmu \in \w{\cP}_\ze^+$ and each $i=1, \ldots, \ell$, we have:
\begin{enumerate}[\normalfont(i)]
\item The action of $\cC^i$ (resp., $\cD^i$ and $H^i_\ze$)  on $({M}^{\otimes})^{\rm sing}_{\mu}$ equals ${\ft}^{\wmu}_\ze \circ\wcC^i\circ ({\ft}^{\wmu}_\ze)^{-1}$ (resp., ${\ft}^{\wmu}_\ze \circ\wcD^i\circ ({\ft}^{\wmu}_\ze)^{-1}$ and ${\ft}^{\wmu}_\ze \circ \w H^i_{\ze} \circ ({\ft}^{\wmu}_\ze)^{-1}$).
\item The action of $\ovcC^i[m]$ (resp., $\ovcD^i[m]$ and $\ov H^i[m]$) on $(\ov{M}^{\otimes})^{\rm sing}_{\ov\mu[m]}$ equals $\ov{\ft}_{[m]}^{\wmu}\circ\wcC^i\circ (\ov{\ft}_{[m]}^{\wmu})^{-1}$ (resp., $\ov{\ft}_{[m]}^{\wmu}\circ\wcD^i\circ (\ov{\ft}_{[m]}^{\wmu})^{-1}$ and $\ov{\ft}_{[m]}^{\wmu}\circ \w H^i_{\ze}  \circ (\ov{\ft}_{[m]}^{\wmu})^{-1}$).
\end{enumerate}
\end{prop}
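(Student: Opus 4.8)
The plan is to follow the proof of \propref{prop:H=} nearly verbatim, with \lemref{cubic-corresp} and \lemref{g-hom} taking over the roles played there by \lemref{corresp} and the homomorphism property of the quadratic Hamiltonians. I will spell out the argument only for $\wcC^i$; the case of $\wcD^i$ is word-for-word the same, while the cases of $\w H^i_\ze$ and $\ov H^i[m]$ are the $p=q=0$ instances of \propref{prop:H=} (available by \remref{rem:SD} and \propref{prop:isom_0}), once \eqnref{Om-Om} with $p=q=0$ is used to identify $\w H^i_\ze=\wcH^i$, $H^i_\ze=\cH^i$ and $\ov H^i[m]=\ovcH^i[m]$ on the modules in question, the $K\otimes K$ correction in \eqnref{Om-Om} and all the $\delta_i$ (in the $(0)$-setting) being zero.

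For part (i), fix $v\in(M^{\otimes})^{\rm sing}_{\mu}$ with $M^{\otimes}=T_\ze(\w M^{\otimes})$, so that $\mu\in\Xi_\ze$ by \propref{weight-Xi}(i) with $(p,n)=(0,\infty)$. By \propref{prop:isom_0}(i), ${\ft}^{\wmu}_\ze(w)=A_\mu w$ and $({\ft}^{\wmu}_\ze)^{-1}(w)=B_\mu w$ for some $A_\mu,B_\mu\in U(\tilde{\mf l}_\ze)$; since $\tilde{\mf l}_\ze$ is generated by the Levi subalgebra of $\wcG_{(0)}$ together with the central element $K$, \lemref{g-hom} shows that $\wcC^i$ commutes with the action of $A_\mu$ and $B_\mu$ and preserves $(\w M^{\otimes})^{\rm sing}_{\wmu}$. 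Therefore
\[
{\ft}^{\wmu}_\ze\circ\wcC^i\circ({\ft}^{\wmu}_\ze)^{-1}(v)
={\ft}^{\wmu}_\ze\big(\wcC^i(B_\mu v)\big)
={\ft}^{\wmu}_\ze\big(B_\mu\wcC^i(v)\big)
=A_\mu B_\mu\wcC^i(v)
=\wcC^i(v)
=\cC^i(v),
\]
where the fourth equality uses that $A_\mu B_\mu={\ft}^{\wmu}_\ze\circ({\ft}^{\wmu}_\ze)^{-1}$ acts as the identity on $(M^{\otimes})^{\rm sing}_{\mu}$, together with the fact that $\wcC^i(v)=\cC^i(v)$ lies in $(M^{\otimes})^{\rm sing}_{\mu}$ (the equality being \lemref{cubic-corresp}(i), valid because $\mu\in\Xi_\ze$, and the membership holding because $\cC^i$ is a $\cG_\ze$-homomorphism by \lemref{g-hom}); the last equality is once more \lemref{cubic-corresp}(i).

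For part (ii) I will run the identical computation with $\ov{T}_{[m]}$, $\bar A_\mu$ and $\ov B_\mu$ from \propref{prop:isom_0}(ii), now using that the weight of $v\in(\ov{M}^{\otimes})^{\rm sing}_{\ov\mu[m]}$ lies in $\ov\Xi[m]$ and invoking \lemref{cubic-corresp}(ii) in place of \lemref{cubic-corresp}(i). I do not expect any genuine obstacle: with \lemref{g-hom} and \lemref{cubic-corresp} available, \propref{prop:C=} is a formal consequence of \propref{prop:isom_0}, exactly parallel to \propref{prop:H=}. The only point requiring a word of justification, and the closest thing to a subtlety, is that each Hamiltonian in question preserves the singular weight space and commutes with $U(\tilde{\mf l}_\ze)$; this follows from \lemref{g-hom} together with the facts that these operators have weight zero and that $\tilde{\mf l}_\ze$ is generated by the Levi subalgebra of $\wcG_{(0)}$ and $K$.
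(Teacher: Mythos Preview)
Your proposal is correct and follows exactly the approach the paper indicates: the paper's proof of \propref{prop:C=} merely says ``this can be proved in the same way as in \propref{prop:H=} by making use of \lemref{cubic-corresp} and \propref{prop:isom_0},'' and your write-up is precisely that argument spelled out, with \lemref{g-hom} supplying the commutation with $U(\tilde{\mf l}_\ze)$ and \lemref{cubic-corresp} replacing \lemref{corresp}. Your handling of the quadratic Hamiltonians via the $p=q=0$ specialization (where the $K\otimes K$ and $\delta_i$ corrections vanish) is also the intended reduction.
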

\begin{proof}
This can be proved in the same way as in \propref{prop:H=} by making use of \lemref{cubic-corresp} and \propref{prop:isom_0}.
\end{proof}

As a consequence of \propref{prop:C=}, we have the following theorem.

\begin{thm} \label{thm:eigenvector-cubic}
For $\w{M}^{\otimes}:=\w M_1\otimes \cdots \otimes \w M_\ell \in \w\OO_\ze$, let  ${M}^{\otimes}= T_\ze( \w{M}^{\otimes})$ and $ \ov {M}^{\otimes}=\ov T_{[m]}( \w{M}^{\otimes})$. For $\wmu \in \w{\cP}_\ze^+$ and each $i=1, \ldots, \ell$, we have:

\begin{enumerate}[\normalfont(i)]
\item The actions of $\wcC^i$ (resp., $\wcD^i$) on $(\w{M}^{\otimes})_{\wmu}^{\rm sing}$ and $\cC^i$ (resp., $\cD^i$) on $({M}^{\otimes})^{\rm sing}_{\mu}$ have the same spectrum, and the map ${\ft}^{\wmu}_{\ze}$ gives a linear isomorphism from the (generalized) eigenspace of $\wcC^i$ (resp., $\wcD^i$) on $(\w{M}^{\otimes})_{\wmu}^{\rm sing}$ to the (generalized) eigenspace of $\cC^i$ (resp., $\cD^i$) on $({M}^{\otimes})^{\rm sing}_{\mu}$ for each common eigenvalue $c$.

\item The actions of $\wcC^i$ (resp., $\wcD^i$) on $(\w{M}^{\otimes})_{\wmu}^{\rm sing}$ and $\ovcC^i[m]$ (resp., $\ovcD^i[m]$) on $(\ov{M}^{\otimes})^{\rm sing}_{\ovmu[m]}$ have the same spectrum, and the map $\ov{\ft}^{\wmu}_{[m]}$ gives a linear isomorphism from the (generalized) eigenspace of $\wcC^i$ (resp., $\wcD^i$) on $(\w{M}^{\otimes})_{\wmu}^{\rm sing}$ to the (generalized) eigenspace of $\ovcC^i[m]$ (resp., $\ovcD^i[m]$) on $(\ov{M}^{\otimes})^{\rm sing}_{\ovmu[m]}$ for each common eigenvalue $c$.
  \end{enumerate}
As a consequence, for each $i$, $\ovcC^i[m]$ (resp., $\ovcD^i[m]$) is diagonalizable on $(\ov {M}^{\otimes})^{\rm sing}_{\ovmu[m]}$ if and only if $\cC^i$ (resp., $\cD^i$) is diagonalizable on $({M}^{\otimes})^{\rm sing}_{\mu}$. In this case, they have the same spectrum.
\end{thm}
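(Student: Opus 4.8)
The statement will follow formally from \propref{prop:C=}, so the proof I have in mind is short. First I would fix $i \in \{1, \ldots, \ell\}$ and observe that the case of $\wcD^i$ is word-for-word identical to that of $\wcC^i$, so that only the latter needs to be spelled out. The plan is then to note that, by the very definition of the categories $\w\OO_\ze$, $\OO_\ze$ and $\ov\OO[m]$, the weight spaces $(\w{M}^{\otimes})_{\wmu}$, $({M}^{\otimes})_\mu$ and $(\ov{M}^{\otimes})_{\ovmu[m]}$ — and a fortiori their singular subspaces — are finite-dimensional, so that \propref{prop:isom_0} supplies linear isomorphisms
$$
{\ft}^{\wmu}_\ze : (\w{M}^{\otimes})^{\rm sing}_{\wmu} \longrightarrow ({M}^{\otimes})^{\rm sing}_\mu, \qquad \ov{\ft}^{\wmu}_{[m]} : (\w{M}^{\otimes})^{\rm sing}_{\wmu} \longrightarrow (\ov{M}^{\otimes})^{\rm sing}_{\ovmu[m]}
$$
between finite-dimensional vector spaces.

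Next I would invoke \propref{prop:C=}(i), according to which $\cC^i$ acts on $({M}^{\otimes})^{\rm sing}_\mu$ as ${\ft}^{\wmu}_\ze \circ \wcC^i \circ ({\ft}^{\wmu}_\ze)^{-1}$; that is, $\cC^i$ and $\wcC^i$ are conjugate endomorphisms of finite-dimensional vector spaces. From here the argument is pure linear algebra: conjugate operators have the same characteristic polynomial, hence the same spectrum; and since ${\ft}^{\wmu}_\ze (\wcC^i - c)^k = (\cC^i - c)^k {\ft}^{\wmu}_\ze$ for every scalar $c$ and every $k \in \N$, the map ${\ft}^{\wmu}_\ze$ carries $\ker(\wcC^i - c)^k$ isomorphically onto $\ker(\cC^i - c)^k$, and taking $k$ to be the common dimension yields the asserted isomorphism of generalized eigenspaces. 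This gives (i), and (ii) is obtained verbatim upon replacing ${\ft}^{\wmu}_\ze$ with $\ov{\ft}^{\wmu}_{[m]}$ and \propref{prop:C=}(i) with \propref{prop:C=}(ii).

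For the concluding ``consequence'' I would use the elementary fact that an endomorphism of a finite-dimensional space is diagonalizable precisely when each of its generalized eigenspaces equals the corresponding eigenspace — a condition manifestly preserved under conjugation. Chaining the two conjugacies then shows that $\ovcC^i[m]$ is diagonalizable on $(\ov{M}^{\otimes})^{\rm sing}_{\ovmu[m]}$ if and only if $\wcC^i$ is diagonalizable on $(\w{M}^{\otimes})^{\rm sing}_{\wmu}$ if and only if $\cC^i$ is diagonalizable on $({M}^{\otimes})^{\rm sing}_\mu$, and when this holds the three spectra agree by (i) and (ii); the same reasoning applies to $\ovcD^i[m]$, $\wcD^i$ and $\cD^i$.

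I do not anticipate any genuine obstacle at this stage. All of the substantive content has already been established upstream: the construction of the intertwiners ${\ft}^{\wmu}_\ze$ and $\ov{\ft}^{\wmu}_{[m]}$ (via super duality and \propref{prop:isom_0}) and the verification that the cubic Gaudin Hamiltonians are intertwined by them on singular weight spaces (\lemref{cubic-corresp} together with \propref{prop:C=}). What remains is only the standard observation that conjugate linear operators share their spectra and their (generalized) eigenspace decompositions, so the ``hardest'' part is merely bookkeeping the four cases $\wcC^i \leftrightarrow \cC^i$, $\wcC^i \leftrightarrow \ovcC^i[m]$, $\wcD^i \leftrightarrow \cD^i$ and $\wcD^i \leftrightarrow \ovcD^i[m]$, which are mutually identical.
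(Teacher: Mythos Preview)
Your proposal is correct and matches the paper's approach: the paper states the theorem ``as a consequence of \propref{prop:C=}'' with no further argument, and you have simply spelled out the elementary linear-algebra step (conjugate operators share spectra and (generalized) eigenspace decompositions) that the paper leaves implicit.
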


\begin{rem}
The Jordan canonical forms of $\wcC^i$ (resp., $\wcD^i$) on $(\w{M}^{\otimes})_{\wmu}^{\rm sing}$, $\cC^i$ (resp., $\cD^i$) on $({M}^{\otimes})^{\rm sing}_{\mu}$ and $\ovcC^i[m]$ (resp., $\ovcD^i[m]$) on $(\ov{M}^{\otimes})^{\rm sing}_{\ovmu[m]}$ are the same for each fixed $i$.
\end{rem}

\subsection{Diagonalization and correspondences between eigenbases}
Let $\cP$ denote the set of all partitions. For $n \in \N \cup \{\infty\}$. Define
\begin{align*}
Q_{(0,n)}&=\setc*{\sum_{i=1}^n \la'_i \ep_{i-\hf}+d \La_0 \in \fh_{(0, n)}^* }{ \la \in  \cP, \, d \in \N }, \\
\ov{Q}[m]_n&=\setc*{\sum_{i=1}^{m} \la_{i}\ep_{i}+ \sum_{j =1}^n \langle \la'_{j}-m \rangle\ep_{j-\hf}+ d\La_0  \in \ovfh[m]_{n}^*}{ \la  \in \cP, \, d \in \N }.
\end{align*}
In fact, $Q_{(0,\infty)}$ and $\ov{Q}[m]_\infty$ are respectively the sets $Q$ and $Q[q,m]$, defined in \eqref{Qqm}, after the specialization $p=q=0$.
We also have the bijection
\begin{equation}\label{Q-0}
   Q_{(0,\infty)} \leftrightarrow \ov{Q}[m]_\infty.
\end{equation}

Define
\begin{align}
\cQ_{(0,n)}&=\setc*{\sum_{i=1}^n \la'_i \ep_{i-\hf} \in {\bf h}_{(0, n)}^* }{ \la \in  \cP  }, \label{Q0} \\
\ov{\cQ}[m]_n&=\setc*{\sum_{i=1}^{m} \la_{i}\ep_{i}+ \sum_{j =1}^n \langle \la'_{j}-m \rangle\ep_{j-\hf}   \in \ov{\bf h}[m]_{n}^*}{ \la  \in \cP}. \label{Qm}
\end{align}
If $n \in \N$, then $\xi \in \cQ_{(0,n)}$ is a dominant integral weight, and $L(\ovcG[m]_n, \ov{\xi})$ is an irreducible polynomial $\ovcG[m]_n$-module for $\ov \xi \in \ov{\cQ}[m]_n$.

By \remref{p=q=0}, we may declare $\Lambda_0$ to be $0$ and identify $Q_{(0,n)}$ with $\cQ_{(0,n)}$ and $\ov{Q}[m]_n$ with $\ov{\cQ}[m]_n$. We may also make the identifications
$L(\G_{(0,n)}, \xi)=L(\cG_{(0,n)}, \xi)$ for $\xi \in \cQ_{(0,n)}$ and $L(\SG[m]_n, \ov \xi)=L(\ovcG[m]_n, \ov{\xi})$ for $\ov \xi \in \ov{\cQ}[m]_n$.

We drop the subscript $n$ if $n=\infty$. According to \eqref{Q-0}, every weight $\xi \in \cQ_\ze$ corresponds to a weight $\ov\xi \in \ov{\cQ}[m]$, and vice versa.

\begin{cor}\label{cor:cubic-diag}
Let $\un{\xi}  \in  \cQ_\ze^{\times \ell}$. For a generic $(z_1, \ldots, z_\ell)\in \C^\ell$, the cubic Gaudin Hamiltonians
$\cC^1,\ldots, \cC^{\ell}$
and
$\cD^1,\ldots,\cD^{\ell}$
are simultaneously diagonalizable on the space $L(\cG_{(0)}, \un{\xi})^{\rm sing}$.
\end{cor}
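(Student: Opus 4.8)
The plan is to reduce to the finite-rank Lie algebra $\cG_{(0,N)}\cong\gl(N)$ via the truncation functor, where the required diagonalizability is already available for the Gaudin algebra, and then transport the conclusion back. First, by \lemref{g-hom}, each $\cC^i$ and each $\cD^i$ is a $\cG_\ze$-endomorphism of $L(\cG_\ze,\un\xi)$, hence preserves the singular space $L(\cG_\ze,\un\xi)^{\rm sing}$ and each of its finite-dimensional weight subspaces $L(\cG_\ze,\un\xi)^{\rm sing}_\mu$. Since $L(\cG_\ze,\un\xi)^{\rm sing}$ is the direct sum of these weight spaces, it suffices to simultaneously diagonalize $\cC^1,\dots,\cC^\ell,\cD^1,\dots,\cD^\ell$ on each $L(\cG_\ze,\un\xi)^{\rm sing}_\mu$. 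Fix a weight $\mu$, and let $\la^{(i)}$ be the partition corresponding to $\xi_i$. Choose $N\in\N$ large enough that $\mu\in\Xi_{(0,N)}$ and $\xi_i\in\cP^+_{(0,N)}$ for all $i$. Identifying $\G_{(0,N)}$-modules with $\cG_{(0,N)}$-modules as in \remref{p=q=0} (declaring $\La_0=0$), \propref{trun} together with \lemref{trun-tensor} gives ${\mf{tr}}^{(0,\infty)}_{(0,N)}\big(L(\cG_\ze,\un\xi)\big)=L(\cG_{(0,N)},\un\xi)$, which is a finite-dimensional $\gl(N)$-module because each $\xi_i\in\cQ_{(0,N)}$ is dominant integral. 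Moreover, for a weight vector of weight $\mu\in\Xi_{(0,N)}$, \lemref{lem:finite_sum} shows that being singular for the Borel of $\cG_\ze$ is equivalent to being singular for the Borel of $\cG_{(0,N)}$, so $L(\cG_\ze,\un\xi)^{\rm sing}_\mu=L(\cG_{(0,N)},\un\xi)^{\rm sing}_\mu$ as subspaces.

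Next I would verify that on this common weight space the infinite-rank operators agree with their finite-rank analogues: if $v$ is a weight vector of weight $\mu\in\Xi_{(0,N)}$, then by \lemref{lem:finite_sum} every monomial occurring in $\cC^i$ (respectively $\cD^i$) that involves an index not in $\I_{(0,N)}$ annihilates $v$, since such a monomial contains a factor lowering the weight by $\ep$ of that index and the resulting weight would leave $\Xi_\ze$; this is precisely the computation performed in the well-definedness discussion preceding \lemref{g-hom} and in the proof of \lemref{cubic-corresp}, now applied to the pair $\I_{(0,N)}\subseteq\I_{(0,\infty)}$ in place of $\ov\I[m]\subseteq\hf\N$ there. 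Hence $\cC^i(v)=\cC^i_N(v)$ and $\cD^i(v)=\cD^i_N(v)$ for all $v\in L(\cG_{(0,N)},\un\xi)^{\rm sing}_\mu$, and the problem is reduced to simultaneously diagonalizing $\cC^1_N,\dots,\cC^\ell_N,\cD^1_N,\dots,\cD^\ell_N$ on the finite-dimensional space $L(\cG_{(0,N)},\un\xi)^{\rm sing}$.

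This last step is handled by the Gaudin algebra of $\gl(N)$. By the construction recalled in \secref{sec:CGH} (cf. \cite{MR}), specialized to $m=0$, the coefficients $\cS^i_1=3\cC^i_N$ and $\cS^i_2$ of the principal part of $-S_{33}(u)$ at $u=z_i$ belong to the Gaudin algebra $\mathcal A$ of $\cG_{(0,N)}$, and so does the quadratic Hamiltonian $H^i_{(0,N)}$; since $\sum_{j\ne i}(z_i-z_j)^{-1}\big(\sum_r E_r^{(i)}\big)\big(\sum_s E_s^{(j)}\big)$ acts on $L(\cG_{(0,N)},\un\xi)$ by a fixed scalar (each $\sum_r E_r$ is scalar on a polynomial module), the formula for $\cS^i_2$ in \secref{sec:CGH} shows that, as operators on $L(\cG_{(0,N)},\un\xi)$, we have $\cC^i_N,\cD^i_N\in\mathcal A+\C\,\mathrm{Id}$. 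As $\un\xi$ is a sequence of dominant integral $\gl(N)$-weights, \propref{Gaudin-alg} yields, for a generic $(z_1,\dots,z_\ell)\in\C^\ell$, that every element of $\mathcal A$ is diagonalizable on $L(\cG_{(0,N)},\un\xi)^{\rm sing}$; as these elements mutually commute and adding scalar operators preserves diagonalizability, $\cC^i_N$ and $\cD^i_N$ are simultaneously diagonalizable there, hence on $L(\cG_\ze,\un\xi)^{\rm sing}_\mu$, and therefore on $L(\cG_\ze,\un\xi)^{\rm sing}$ (with ``generic'' understood relative to the chosen weight space, exactly as in the proof of \corref{cor:quad-diag-central}).

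I expect the main obstacle to be the careful verification that $\cC^i_N$ and $\cD^i_N$ really lie in $\mathcal A$ up to scalar operators on $L(\cG_{(0,N)},\un\xi)$ --- i.e.\ that passing to the principal parts of the Talalaev-type generating series $S_{33}(u)$ keeps one inside the commutative Gaudin algebra, and that the non-Gaudin-algebra terms appearing in $\cS^i_2$ really do act by scalars on a tensor product of polynomial modules --- together with the bookkeeping in the second paragraph matching the infinite-rank cubic Hamiltonians with the finite-rank ones on truncated weight spaces. Everything else is a routine assembly of results already available in the excerpt, and in particular super duality is not needed here (in contrast to the super version, \thmref{thm:cubic-diag}); only the truncation functor is used.
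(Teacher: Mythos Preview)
Your proposal is correct and takes essentially the same approach as the paper, whose proof consists of the single sentence ``This follows from \propref{Gaudin-alg} and an argument similar to the proof of \corref{cor:quad-diag-central}.'' You have accurately unpacked what that sentence entails: reduce to a fixed singular weight space, truncate to finite rank $\cG_{(0,N)}$ via \propref{trun} and \lemref{trun-tensor}, identify the infinite-rank cubic Hamiltonians with the finite-rank ones on the truncated weight space via \lemref{lem:finite_sum}, and then invoke \propref{Gaudin-alg} together with the observation from \secref{sec:CGH} that $\cC^i_N$ and $\cD^i_N$ act as elements of the Gaudin algebra up to scalars on $L(\cG_{(0,N)},\un\xi)$.
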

\begin{proof}
This follows from \propref{Gaudin-alg} and an argument similar to the proof of \corref{cor:quad-diag-central}.
\end{proof}

\begin{thm}\label{thm:cubic-diag-central}
Let $\un{\ov \xi} \in  \ov \cQ[m]^{\times \ell}$, and let $\un{\xi} \in  \cQ_\ze^{\times \ell}$ be the corresponding sequence of weights under \eqref{Q-0}.
Let $\ovmu \in  \ov \cQ[m]$ and $\mu \in \cQ_\ze$ the corresponding weight. For a generic $(z_1, \ldots, z_\ell)\in \C^\ell$, we have:

\begin{enumerate}[\normalfont(i)]

\item The cubic Gaudin Hamiltonians
$\ovcC^1[m],\ldots,\ovcC^\ell[m]$
and
$\ovcD^1[m],\ldots,\ovcD^\ell[m]$
 are simultaneously diagonalizable on $L(\ovcG[m],\un{\ov{\xi}})^{\rm sing}_{\ovmu}$.

\item Every joint eigenbasis of the Hamiltonians
$\ovcC^1[m],\ldots,\ovcC^\ell[m]$
 (resp., $\ovcD^1[m],\ldots,\ovcD^\ell[m]$) on $L(\ovcG[m],\un{\ov{\xi}})^{\rm sing}_{\ovmu}$ can be obtained from some joint eigenbasis of
 $\cC^1,\ldots, \cC^{\ell}$
  (resp., $\cD^1,\ldots, \cD^{\ell}$) on $L(\cG_\ze, \un{{\xi}})^{\rm sing}_\mu$, and vice versa.

\item For each $i=1,\ldots,\ell$, the actions of $\ovcC^i[m]$
    (resp., $\ovcD^i[m]$) on $L(\ovcG[m],\un{\ov{\xi}})^{\rm sing}_{\ovmu}$ and $\cC^i$ (resp., $\cD^i$)
     on $L(\cG_\ze, \un{{\xi}})^{\rm sing}_\mu$ have the same spectrum.

\end{enumerate}
\end{thm}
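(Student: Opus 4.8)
The plan is to deduce the statement from \thmref{thm:eigenvector-cubic} (equivalently \propref{prop:C=}) together with \corref{cor:cubic-diag}, using super duality to realize both $L(\ovcG[m],\un{\ov\xi})$ and $L(\cG_\ze,\un\xi)$ as images of one and the same object of $\w\OO_\ze$ under the functors $\ov T_{[m]}$ and $T_\ze$ respectively; this runs in exact parallel with the proof of the quadratic statement \thmref{diag-central}.

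First I would lift the data to $\DG_\ze$. Given $\un{\ov\xi}\in\ov\cQ[m]^{\times\ell}$ and its partner $\un\xi\in\cQ_\ze^{\times\ell}$ under \eqref{Q-0}, the weight correspondences \eqref{Q-bij} (together with the identifications recalled in \secref{sec:cubic} and \remref{p=q=0}, under which the value of $\La_0$ on the weights in question may be prescribed freely, as the cubic Hamiltonians do not involve $K$) provide, for each $i$, an irreducible highest weight $\DG_\ze$-module $\w M_i$ with $T_\ze(\w M_i)=L(\cG_\ze,\xi_i)$ and $\ov T_{[m]}(\w M_i)=L(\ovcG[m],\ov\xi_i)$, as well as a weight $\wmu$ that passes to $\mu$ and to $\ovmu$. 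Put $\w M^{\otimes}:=\w M_1\otimes\cdots\otimes\w M_\ell\in\w\OO_\ze$. Since $T_\ze$ and $\ov T_{[m]}$ are tensor functors (\thmref{thm:SD}, \propref{tensor-functor} and \remref{rem:SD}), we obtain $T_\ze(\w M^{\otimes})=L(\cG_\ze,\un\xi)$ and $\ov T_{[m]}(\w M^{\otimes})=L(\ovcG[m],\un{\ov\xi})$.

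Next I would invoke \corref{cor:cubic-diag}: for a generic $(z_1,\ldots,z_\ell)\in\C^\ell$ the Hamiltonians $\cC^1,\ldots,\cC^\ell,\cD^1,\ldots,\cD^\ell$ are simultaneously diagonalizable on $L(\cG_\ze,\un\xi)^{\rm sing}$, hence on the finite-dimensional weight space $L(\cG_\ze,\un\xi)^{\rm sing}_\mu$. By \propref{prop:isom_0} the isomorphisms ${\ft}^{\wmu}_\ze$ and $\ov{\ft}^{\wmu}_{[m]}$ depend on $\wmu$ but not on $i$, and by \propref{prop:C=} the single isomorphism $\Phi:=\ov{\ft}^{\wmu}_{[m]}\circ({\ft}^{\wmu}_\ze)^{-1}:L(\cG_\ze,\un\xi)^{\rm sing}_\mu\to L(\ovcG[m],\un{\ov\xi})^{\rm sing}_{\ovmu}$ conjugates the action of $\cC^i$ (resp., $\cD^i$) into that of $\ovcC^i[m]$ (resp., $\ovcD^i[m]$) for every $i=1,\ldots,\ell$. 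Conjugation by $\Phi$ preserves Jordan type, so simultaneous diagonalizability and spectra transport from the $\cG_\ze$-side to the $\ovcG[m]$-side, which gives (i) and (iii); and since $\Phi$ is one and the same map for all $i$, it carries any joint eigenbasis of $\{\cC^i\}_{i=1}^\ell$ (resp., $\{\cD^i\}_{i=1}^\ell$) on $L(\cG_\ze,\un\xi)^{\rm sing}_\mu$ to a joint eigenbasis of $\{\ovcC^i[m]\}_{i=1}^\ell$ (resp., $\{\ovcD^i[m]\}_{i=1}^\ell$) on $L(\ovcG[m],\un{\ov\xi})^{\rm sing}_{\ovmu}$, and vice versa, which is (ii). (The mutual commutativity of $\ovcC^i[m]$ and $\ovcD^i[m]$, needed to speak of joint eigenbases, follows from \lemref{g-hom} together with the corresponding fact on the $\cG_\ze$-side transported through $\Phi$.)

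I do not expect a genuine obstacle: the argument is bookkeeping layered on the super duality correspondences already established in \secref{sec:cubic}, formally parallel to the quadratic case. The points deserving a little care are the compatibility of the weight bijection \eqref{Q-0} with the super duality weight bijection \eqref{Q-bij}, which is immediate from the definitions, and the genericity, which is precisely the open dense condition furnished by \corref{cor:cubic-diag} (ultimately \propref{Gaudin-alg}, applied on a sufficiently large finite-rank truncation as in the proof of \corref{cor:quad-diag-central}); since the weight $\mu$ is fixed, only one weight space is at issue and a single such condition suffices.
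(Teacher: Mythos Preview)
Your proposal is correct and follows essentially the same route as the paper: the paper's proof is the single line ``This follows from \thmref{thm:eigenvector-cubic} and \corref{cor:cubic-diag},'' and your argument is an accurate expansion of precisely that, lifting to $\w\OO_\ze$ via super duality and transporting diagonalizability and spectra through the isomorphism $\ov{\ft}^{\wmu}_{[m]}\circ({\ft}^{\wmu}_\ze)^{-1}$. One small remark: the reference to \lemref{g-hom} for mutual commutativity is slightly off (that lemma gives $\ovcG[m]$-equivariance, not commutativity); the commutativity is rather inherited from the Gaudin algebra of $\ovcG[m]_n$ noted in \secref{sec:CGH}, or, as you also say, transported from the $\cG_\ze$-side through $\Phi$.
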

\begin{proof}
This follows from \thmref{thm:eigenvector-cubic} and \corref{cor:cubic-diag}.
\end{proof}

Let $n\in \N$. To establish the diagonalization of the Hamiltonians $\ovcC^i[m]_{n}$ (resp., $\ovcD^i[m]_{n}$) on the space $L(\ovcG[m]_{n}, \un{\ov\xi})$ for $\ov\xi_1, \ldots, \ov\xi_\ell \in \ov\cQ[m]_{n}$, we just need to establish the diagonalization of $\ovcC^i[m]_{n}$ (resp., $\ovcD^i[m]_{n}$) on the subspace $L(\ovcG[m]_{n}, \un{\ov\xi})^{\rm sing}$ by \lemref{g-hom}.

\begin{thm} \label{thm:cubic-diag}
Let $n\in \N$, $\un{\ov \xi} \in \ov\cQ[m]_{n}^{\times \ell}$ and $\ovmu \in \ov\cQ[m]_{n}$. For a generic $(z_1, \ldots, z_\ell)\in \C^\ell$, the cubic Gaudin Hamiltonians $\ovcC^1[m]_{n},\ldots,\ovcC^\ell[m]_{n}$
and
$\ovcD^1[m]_{n},\ldots,\ovcD^\ell[m]_{n}$
 are simultaneously diagonalizable on the space $L(\ovcG[m]_{n}, \un{\ov\xi})^{\rm sing}_{\ovmu}$,
and each joint eigenbasis of the Hamiltonians can be obtained from some joint eigenbasis of the quadratic Gaudin Hamiltonians
 $H^1_{(0,k)},\ldots,H^\ell_{(0,k)}$ on the corresponding singular weight space of the tensor product of the corresponding finite-dimensional irreducible $\cG_{(0,k)}$-modules for $k$ sufficiently large.
\end{thm}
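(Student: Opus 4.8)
The plan is to run the argument of the proof of \thmref{thm:quad-diag}, with \thmref{diag-central} replaced by its cubic counterpart \thmref{thm:cubic-diag-central}, and with one extra truncation step inserted at each end to pass between finite and infinite rank. Fix $n\in\N$, $\un{\ov\xi}=(\ov\xi_1,\ldots,\ov\xi_\ell)\in\ov\cQ[m]_n^{\times\ell}$ and $\ovmu\in\ov\cQ[m]_n$; we may assume $\ovmu$ is a weight of $L(\ovcG[m]_n,\un{\ov\xi})$, since otherwise the singular weight space is zero. Each $\ov\xi_j$ is determined by a partition $\la^j$ with $(\la^j)'_{n+1}\le m$; viewing the same partition data in the infinite-rank setting and declaring $\La_0=0$ as in \remref{p=q=0}, we may regard $\un{\ov\xi}\in\ov{Q}[m]^{\times\ell}$ and $\ovmu\in\ov{Q}[m]$, and let $\un\xi=(\xi^1,\ldots,\xi^\ell)\in\cQ_\ze^{\times\ell}$, $\mu\in\cQ_\ze$ be the weights corresponding to them under \eqref{Q-0}.

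First I would identify the relevant singular weight spaces across ranks. By \remref{p=q=0} and the conventions of \secref{sec:CGH}, $L(\ovcG[m]_n,\un{\ov\xi})$ is the truncation $\ov{\mf{tr}}[0,m]^{(0,\infty)}_{(0,n)}\bigl(L(\ovcG[m],\un{\ov\xi})\bigr)$ --- this follows from \lemref{trun-tensor} and \propref{trun}(ii), whose hypothesis is exactly $(\la^j)'_{n+1}\le m$. Since $\ovmu\in\ov\Xi[m]_{(0,n)}$, \lemref{lem:finite_sum} forces $E_{r,s}v=0$ whenever $v$ has weight $\ovmu$ and at least one of $r,s$ lies outside $\ov\I[m]_n$; hence $L(\ovcG[m]_n,\un{\ov\xi})^{\rm sing}_{\ovmu}=L(\ovcG[m],\un{\ov\xi})^{\rm sing}_{\ovmu}$, and on this common space $\ovcC^i[m]_n,\ovcD^i[m]_n$ act exactly as $\ovcC^i[m],\ovcD^i[m]$ --- a truncation analogue of \lemref{cubic-corresp}, proved the same way. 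The identical reasoning on the $\cG$-side gives, for any $k$ large enough that the chosen weight space lies in $\mf{tr}^{(0,\infty)}_{(0,k)}\bigl(L(\cG_\ze,\un\xi)\bigr)$, an identification $L(\cG_\ze,\un\xi)^{\rm sing}_{\mu}=L(\cG_{(0,k)},\un\xi)^{\rm sing}_{\mu}$, where $L(\cG_{(0,k)},\un\xi)$ is a tensor product of finite-dimensional irreducible polynomial $\cG_{(0,k)}\cong\gl(k)$-modules by \propref{trun}(i), and under which $\cC^i,\cD^i$ restrict to the cubic Gaudin Hamiltonians $\cC^i_k,\cD^i_k$.

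Next I would apply \thmref{thm:cubic-diag-central}: for a generic $(z_1,\ldots,z_\ell)\in\C^\ell$, the Hamiltonians $\ovcC^1[m],\ldots,\ovcC^\ell[m]$ and $\ovcD^1[m],\ldots,\ovcD^\ell[m]$ are simultaneously diagonalizable on $L(\ovcG[m],\un{\ov\xi})^{\rm sing}_{\ovmu}$, and via the isomorphism $\ov{\ft}^{\wmu}_{[m]}$ of \propref{prop:isom_0} (together with \thmref{thm:eigenvector-cubic}) every joint eigenbasis is the image of some joint eigenbasis of $\cC^i,\cD^i$ on $L(\cG_\ze,\un\xi)^{\rm sing}_{\mu}$, and conversely, with the same spectrum. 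Composing with the two truncation identifications above yields a spectrum-preserving linear isomorphism $L(\ovcG[m]_n,\un{\ov\xi})^{\rm sing}_{\ovmu}\to L(\cG_{(0,k)},\un\xi)^{\rm sing}_{\mu}$ intertwining $\ovcC^i[m]_n,\ovcD^i[m]_n$ with $\cC^i_k,\cD^i_k$ and carrying joint eigenbases bijectively to joint eigenbases. It then remains only to compare, on $L(\cG_{(0,k)},\un\xi)^{\rm sing}_{\mu}$ and for a generic configuration, the joint eigenbases of $\cC^i_k,\cD^i_k$ with those of the quadratic Gaudin Hamiltonians $H^1_{(0,k)},\ldots,H^\ell_{(0,k)}$: all of these operators lie in the Gaudin algebra of $\cG_{(0,k)}$, which acts diagonally by \propref{Gaudin-alg} and, on a nonempty Zariski-open set of configurations, with simple joint spectrum by \cite{MTV09,Ryb}; once the spectrum is simple the weight space breaks into one-dimensional common eigenspaces of the whole Gaudin algebra, so a joint eigenbasis of any subfamily that still separates these eigenspaces is forced to be that essentially unique basis.

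\textbf{Main obstacle.} The crux is the very last step: one must show that on the particular $\gl(k)$-modules produced by super duality the quadratic Gaudin Hamiltonians separate the joint spectrum of the Gaudin algebra as sharply as the cubic (equivalently, all higher) Hamiltonians, for a generic configuration. This is exactly where \cite{MTV10}, which states that the Gaudin algebra of $\gl(k)$ on the singular space of a tensor power of the natural module is generated by its quadratic part, enters. Since each $\xi^j$ is (the conjugate of) a partition with a number of rows bounded independently of $k$, the module $L(\cG_{(0,k)},\xi^j)$ occurs as a Schur--Weyl summand of $(\C^k)^{\otimes|\xi^j|}$; transporting the generation statement along the associated fusion procedure to $L(\cG_{(0,k)},\un\xi)$ shows that the quadratic Hamiltonians already generate the Gaudin algebra there, so that a joint eigenvector of $\{H^i_{(0,k)}\}$ is automatically a joint eigenvector of the whole Gaudin algebra and in particular of $\{\cC^i_k,\cD^i_k\}$; combined with the openness of the simple-spectrum locus from \cite{MTV09,Ryb}, this is the input that lets one match the joint eigenbases of $\{\cC^i_k,\cD^i_k\}$ with those of $\{H^i_{(0,k)}\}$, hence the cubic eigenbases for $\ovcG[m]_n$ with the quadratic eigenbases for $\gl(k)$. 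All the remaining ingredients --- the two truncation identifications and the truncation analogue of \lemref{cubic-corresp} --- are routine given \lemref{lem:finite_sum}, \propref{trun}, \lemref{trun-tensor}, and \thmref{thm:cubic-diag-central}.
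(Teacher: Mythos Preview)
Your overall architecture is exactly the paper's: pass from $\ovcG[m]_n$ to $\ovcG[m]$ by truncation, apply \thmref{thm:cubic-diag-central} to cross over to $\cG_\ze$, and truncate down to $\cG_{(0,k)}$. The identifications you spell out via \lemref{lem:finite_sum}, \lemref{trun-tensor}, \propref{trun} and the truncation analogue of \lemref{cubic-corresp} are precisely what the paper packages into the phrase ``an argument similar to the proof of \thmref{thm:quad-diag}.''

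Where you diverge is in the handling of your ``main obstacle.'' The paper does not try to show that the quadratic family separates the Gaudin spectrum on an arbitrary polynomial tensor product; it simply observes that $H^i_{(0,k)}$, $\cC^i_k$ and $\cD^i_k$ all lie in the commutative Gaudin algebra of $\cG_{(0,k)}$ and are therefore, for generic $(z_1,\ldots,z_\ell)$, simultaneously diagonalizable by \propref{Gaudin-alg}. A common joint eigenbasis of all three families on $L(\cG_{(0,k)},\un\xi)^{\rm sing}_\mu$ is then in particular a joint eigenbasis of $\{H^i_{(0,k)}\}$, and its transport under the super-duality/truncation isomorphism is a joint eigenbasis of $\{\ovcC^i[m]_n,\ovcD^i[m]_n\}$; this is all the paper's one-line proof uses. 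Your route through simple spectrum plus \cite{MTV10} plus a fusion argument is not needed, and the fusion step itself is not straightforward: \cite{MTV10} is stated for tensor powers of the natural module with one marked point per tensor factor, and passing to Schur--Weyl summands amounts to a confluent limit of the $z$'s, under which the generation statement does not automatically survive. So drop that paragraph and replace it with the single observation that all three families are simultaneously diagonalizable in the Gaudin algebra of $\gl(k)$; then your proof coincides with the paper's.
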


\begin{proof}
The Gaudin Hamiltonians $H^i_{(0,k)}$, $\cC^i_k$ and $\cD^i_k$ ($i=1,\ldots,\ell$) are simultaneously diagonalizable for $k \in \N$ and a generic $(z_1, \ldots, z_\ell)\in \C^\ell$.
The statement then follows from \thmref{thm:cubic-diag-central} and an argument similar to the proof of \thmref{thm:quad-diag}.
\end{proof}

We now return to the study of the quadratic Gaudin Hamiltonians for $\ovcG[m]_n$ on the tensor power of the natural module $\C^{m|n} \cong L(\ovcG[m]_n,\ep_1)$.

\begin{thm}\label{thm:cyclic}
Let $n \in \N$ and $\un{\ov \xi}=(\ep_1, \ldots, \ep_1)  \in \ov\cQ[m]_{n}^{\times \ell}$. For each $(z_1, \ldots, z_\ell) \in {\bf X}_\ell$, the action of the algebra generated by the quadratic Gaudin Hamiltonians $\ov H^1[m]_n,\ldots, \ov H^{\ell}[m]_n$ on the space $L(\ovcG[m]_n,\un{\ov\xi})^{\rm sing}$ contains a cyclic vector.
\end{thm}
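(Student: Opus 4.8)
Write $\ov L^\otimes:=L(\ovcG[m]_n,\un{\ov\xi})=(\C^{m|n})^{\otimes\ell}$. Being a tensor product of unitarizable modules it is completely reducible, and each $\ov H^i[m]_n$ commutes with $\ovcG[m]_n$, so everything takes place on the finite‑dimensional space $(\ov L^\otimes)^{\rm sing}$; I want the commutative subalgebra $\mc A\subseteq\mathrm{End}\big((\ov L^\otimes)^{\rm sing}\big)$ generated by $\ov H^1[m]_n,\ldots,\ov H^\ell[m]_n$ to act with a cyclic vector. The idea is to transport this, via super duality and truncation, to the analogous statement for $\cG_{(0,k)}\cong\gl(k)$ and then quote the known non‑super result. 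First I would record, by a direct weight computation from \eqref{weight:wtIm}--\eqref{weight:Im} (declaring $\La_0=0$ as in \remref{p=q=0}) and the bijections \eqref{cP}--\eqref{Q-0}, that the natural $\wcG_\ze$-module $L(\wcG_\ze,\wla)\in\w\OO_\ze$ with $\wla=\ep_{1/2}$ (corresponding to the partition $(1)$) is carried by $\ov T_{[m]}$ followed by $\ov{\mf{tr}}[m]^{(0,\infty)}_{(0,n)}$ to $\C^{m|n}=L(\ovcG[m]_n,\ep_1)$, and by $T_\ze$ followed by $\mf{tr}^{(0,\infty)}_{(0,k)}$ to the natural $\gl(k)$-module $\C^k$. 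Setting $\w L^\otimes:=L(\wcG_\ze,\wla)^{\otimes\ell}\in\w\OO_\ze$, \propref{tensor-functor}, \lemref{trun-tensor} and \propref{trun} then give $\ov{\mf{tr}}[m]^{(0,\infty)}_{(0,n)}\big(\ov T_{[m]}(\w L^\otimes)\big)=\ov L^\otimes$ and $\mf{tr}^{(0,\infty)}_{(0,k)}\big(T_\ze(\w L^\otimes)\big)=(\C^k)^{\otimes\ell}$ for every $k\in\N$.

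Next I would fix a weight $\wmu\in\w\cP_\ze^+$ of $\w L^\otimes$ whose image $\ovmu[m]$ occurs in $\ov L^\otimes$ (finitely many, since $\ov L^\otimes$ is finite‑dimensional) and apply \propref{prop:C=}(ii) in the quadratic case: the isomorphism $\ov\ft^{\wmu}_{[m]}$ of \propref{prop:isom_0} conjugates $\w H^i_\ze$ on $(\w L^\otimes)^{\rm sing}_{\wmu}$ to $\ov H^i[m]$ on $\big(\ov T_{[m]}(\w L^\otimes)\big)^{\rm sing}_{\ovmu[m]}$. Since $\ovmu[m]\in\ov\Xi[m]_{(0,n)}$, \lemref{lem:finite_sum} shows $\ov H^i[m]$ and $\ov H^i[m]_n$ agree on this weight space, so passing to the truncation $\ov{\mf{tr}}[m]^{(0,\infty)}_{(0,n)}$ yields a linear isomorphism $(\w L^\otimes)^{\rm sing}_{\wmu}\xrightarrow{\ \sim\ }(\ov L^\otimes)^{\rm sing}_{\ovmu[m]}$ intertwining $\w H^i_\ze$ with $\ov H^i[m]_n$. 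In exactly the same way, \propref{prop:C=}(i) and \lemref{lem:finite_sum} provide, for $k$ large, a linear isomorphism $(\w L^\otimes)^{\rm sing}_{\wmu}\xrightarrow{\ \sim\ }\big((\C^k)^{\otimes\ell}\big)^{\rm sing}_{\mu}$ intertwining $\w H^i_\ze$ with $H^i_{(0,k)}$. Taking $k$ large enough to serve all the finitely many $\wmu$ and composing, I obtain linear isomorphisms $\phi_{\wmu}$ from $(\ov L^\otimes)^{\rm sing}_{\ovmu[m]}$ onto $\big((\C^k)^{\otimes\ell}\big)^{\rm sing}_{\mu}$ with $\phi_{\wmu}\circ\ov H^i[m]_n=H^i_{(0,k)}\circ\phi_{\wmu}$; their direct sum $\phi$ is then a linear isomorphism from $(\ov L^\otimes)^{\rm sing}$ onto $W:=\bigoplus_{\wmu}\big((\C^k)^{\otimes\ell}\big)^{\rm sing}_{\mu}$ carrying $\mc A$ onto the subalgebra $\mc B$ of $\mathrm{End}(W)$ generated by the restrictions of $H^1_{(0,k)},\ldots,H^\ell_{(0,k)}$.

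Finally I would invoke the non‑super input: by \cite{MTV10} the Gaudin algebra of $\gl(k)$ on $\big((\C^k)^{\otimes\ell}\big)^{\rm sing}$ is generated by the quadratic Gaudin Hamiltonians, and for every $(z_1,\ldots,z_\ell)\in{\bf X}_\ell$ this algebra admits a cyclic vector there (cf. \cite{MTV09,MTV10}; the $\gl(k)$-weights occurring on $\big((\C^k)^{\otimes\ell}\big)^{\rm sing}$ are separated by the diagonally embedded center of $U(\gl(k))$, which lies in the Gaudin algebra, while each weight component is cyclic through its identification with the coordinate ring of an intersection of Schubert cells). Hence $\big((\C^k)^{\otimes\ell}\big)^{\rm sing}$ is cyclic over the algebra generated by $H^1_{(0,k)},\ldots,H^\ell_{(0,k)}$; as $W$ is a sum of weight spaces, it is a direct summand of that module, hence a quotient of a cyclic module, hence cyclic over $\mc B$. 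Transporting a cyclic vector back along $\phi$ would show $(\ov L^\otimes)^{\rm sing}$ is cyclic over $\mc A$, which is the assertion. The main obstacle is precisely the non‑super fact just quoted — that for a tensor power of the vector representation the quadratic Hamiltonians generate the whole Gaudin algebra and this algebra has a cyclic vector for \emph{every} (not merely generic) configuration of distinct points; the super‑duality‑and‑truncation reduction itself, though a little lengthy, is routine given Sections~\ref{sec:pre}--\ref{sec:cubic}, the only delicate bookkeeping being the identification of the natural $\ovcG[m]_n$-module with the natural $\cG_{(0,k)}$-module under the composite functor.
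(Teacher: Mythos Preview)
Your proposal is correct and follows essentially the same approach as the paper: both reduce via the isomorphisms $\ov\ft^{\wmu}_{[m]}$ and $\ft^{\wmu}_\ze$ of \propref{prop:isom_0} together with \propref{prop:C=} and truncation to the corresponding statement for $\cG_{(0,k)}\cong\gl(k)$ on $(\C^k)^{\otimes\ell}$, and then invoke the non-super input that the quadratic Hamiltonians generate the Gaudin algebra there with a cyclic vector for every $(z_1,\ldots,z_\ell)\in{\bf X}_\ell$. The only minor difference is in the citation for that last fact: the paper appeals to \cite[Theorem~3.2]{MTV10} and \cite[Main Theorem]{Ryb}, whereas you sketch the Schubert-cell argument via \cite{MTV09,MTV10}.
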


\begin{proof}
Since $L(\ovcG[m]_n,\un{\ov\xi})^{\rm sing}$ is finite-dimensional, $L(\ovcG[m]_n,\un{\ov\xi})^{\rm sing}=\bigoplus_{\ov\mu}L(\ovcG[m]_n,\un{\ov\xi})^{\rm sing}_{\ov\mu}$ is a finite direct sum. Applying \propref{prop:isom_0} as well as the arguments used above, we can choose $k$ sufficiently large that each weight $\mu$ that corresponds to the singular weight $\ovmu$ of $L(\ovcG[m]_n,\un{\ov\xi})$ is a singular weight of $L(\cG_{(0,k)},\un{\xi})$, where $\un{\xi}:=(\ep_\hf, \ldots, \ep_\hf) \in  \cQ_{(0,k)}^{\times \ell}$.
Denote $M=\bigoplus_\mu L(\cG_{(0,k)},\un{\xi})^{\rm sing}_\mu$, where $\mu$ runs over all singular weights corresponding to singular weights of $L(\ovcG[m]_n,\un{\ov\xi})$.
Let $\hat{\ft}=\bigoplus_{\wmu} {\ft}^{\wmu}_\ze \circ (\ov{\ft}^{\wmu}_{[m]})^{-1}\,:\,L(\ovcG[m]_n,\un{\ov\xi})^{\rm sing}\longrightarrow M$, where $\wmu$ runs over all singular weights corresponding to singular weights of $L(\ovcG[m]_n,\un{\ov\xi})$.
\propref{prop:C=} implies that the action of $\ov H^i[m]_n$ on $L(\ovcG[m]_n,\un{\ov\xi})^{\rm sing}$ equals $\hat{\ft}^{-1}\circ H^i_{(0,k)}\circ \hat{\ft}$ for all $i$.
By  \cite[Theorem 3.2]{MTV10} and \cite[Main Theorem]{Ryb}, we see that the action of the algebra generated by the Hamiltonians $H^1_{(0,k)},\ldots, H^{\ell}_{(0,k)}$ on the space $M$ contains a cyclic vector and hence the theorem follows.
\end{proof}

\begin{cor}
Let $n \in \N$ and $\un{\ov \xi}=(\ep_1, \ldots, \ep_1)  \in \ov\cQ[m]_{n}^{\times \ell}$. For a generic $(z_1, \ldots, z_\ell)\in \C^\ell$, the action of the algebra generated by the Hamiltonians $\ov H^1[m]_n,\ldots, \ov H^{\ell}[m]_n$ is simultaneously diagonalizable on the space $L(\ovcG[m]_n,\un{\ov\xi})^{\rm sing}$ with a simple joint spectrum.
\end{cor}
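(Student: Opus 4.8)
The plan is to combine the two facts already established about the commuting family $\ov H^1[m]_n,\ldots,\ov H^\ell[m]_n$ acting on the finite-dimensional vector space $L(\ovcG[m]_n,\un{\ov\xi})^{\rm sing}$ with $\un{\ov\xi}=(\ep_1,\ldots,\ep_1)$. On the one hand, \thmref{thm:cyclic} says the commutative algebra $\mathcal{A}$ generated by these Hamiltonians admits a cyclic vector on $L(\ovcG[m]_n,\un{\ov\xi})^{\rm sing}$ for \emph{every} $(z_1,\ldots,z_\ell)\in{\bf X}_\ell$. On the other hand, applying \thmref{thm:quad-diag} with $\un{\ov\xi}=(\ep_1,\ldots,\ep_1)$ to each singular weight space $L(\ovcG[m]_n,\un{\ov\xi})^{\rm sing}_{\ov\mu}$ shows that, for a generic $(z_1,\ldots,z_\ell)\in\C^\ell$, the Hamiltonians are simultaneously diagonalizable there. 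Since $L(\ovcG[m]_n,\un{\ov\xi})$ is a tensor product of finite-dimensional modules, only finitely many weights $\ov\mu$ occur, so the intersection of the corresponding generic loci is again generic; on that locus $\mathcal{A}$ is simultaneously diagonalizable on all of $L(\ovcG[m]_n,\un{\ov\xi})^{\rm sing}=\bigoplus_{\ov\mu}L(\ovcG[m]_n,\un{\ov\xi})^{\rm sing}_{\ov\mu}$ and, by \thmref{thm:cyclic}, also admits a cyclic vector there. It then remains to invoke the elementary fact that a commutative algebra of operators on a finite-dimensional space that is both simultaneously diagonalizable and cyclic has simple joint spectrum.

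I would spell out that last step as follows. Write $V:=L(\ovcG[m]_n,\un{\ov\xi})^{\rm sing}=\bigoplus_{c}V_c$, the decomposition into joint eigenspaces, where $c$ runs over the distinct tuples of simultaneous eigenvalues of $\ov H^1[m]_n,\ldots,\ov H^\ell[m]_n$, and let $v=\sum_c v_c$ with $v_c\in V_c$ be a cyclic vector. Every element of $\mathcal{A}$ acts on $V_c$ by a scalar depending only on $c$, so $\mathcal{A}v\subseteq\bigoplus_{c\,:\,v_c\neq 0}\C v_c$, a subspace of dimension at most the number of distinct tuples $c$. Since $\mathcal{A}v=V$, we are forced to have $v_c\neq 0$ for every $c$ and $\dim V_c=1$ for every $c$; in other words the joint spectrum is simple.

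I do not expect a serious obstacle here; the corollary is essentially a packaging of \thmref{thm:cyclic} and \thmref{thm:quad-diag}. The only point requiring a little care is making sure the two statements apply on a common parameter locus and on the \emph{whole} singular space: diagonalizability is proved weight-space by weight-space, so one intersects finitely many generic loci, while the cyclic vector is supplied directly on the full singular space by \thmref{thm:cyclic}, so no gluing of cyclic vectors across weight spaces is needed. With both properties in hand on the same generic locus, the linear-algebra lemma above completes the argument.
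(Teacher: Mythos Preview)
Your proposal is correct and follows essentially the same route as the paper, which simply cites \thmref{thm:quad-diag} and \thmref{thm:cyclic}. You have spelled out the finitely-many-weights argument and the linear-algebra lemma (diagonalizable plus cyclic implies simple joint spectrum) that the paper leaves implicit, but the underlying strategy is identical.
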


\begin{proof}
This follows from \thmref{thm:quad-diag} and \thmref{thm:cyclic}.
\end{proof}

The above corollary matches the result obtained earlier by Mukhin, Tarasov, and Young using the Bethe ansatz method (cf. \cite[Theorem 5.2]{MVY}).

\vskip 0.5cm
\noindent{\bf Acknowledgments.}
The first author was partially supported by NSFC (Grant No. 12161090).
The second author was partially supported by MoST grant 110-2115-M-006-006 of Taiwan.
Part of this research was done during the visit of the third author to Yunnan University. The third author was partially supported by MoST grant 109-2115-M-006-019-MY3 and NSTC grant 112-2115-M-006-015-MY2 of Taiwan and he thanks Yunnan University for hospitality and support.
The authors are pleased to thank the referee for useful suggestions.

\bigskip
\frenchspacing

\end{document}